\definecolor{red}{rgb}{0.7,0.15,0.15}
\definecolor{green}{rgb}{0,0.5,0}
\definecolor{blue}{rgb}{0,0,0.7}
\makeatletter \@addtoreset{equation}{section}
\newtheorem{theorem}{Theorem}[section]
\newtheorem{assumption}[theorem]{Assumption}
\newtheorem{corollary}[theorem]{Corollary}
\newtheorem{proposition}[theorem]{Proposition}
\newtheorem{definition}[theorem]{Definition}
\newtheorem{remark}[theorem]{Remark}
\def\no{\noindent}
\def\beq{\begin{eqnarray}}
\def\eeq{\end{eqnarray}}
\def\be*{\begin{eqnarray*}}
\def\ee*{\end{eqnarray*}}
\def \E{\mathbb{E}}
\def \F{\mathbb{F}}
\def \H{\mathbb{H}}
\def \M{\mathbb{M}}
\def \N{\mathbb{N}}
\def \P{\mathbb{P}}
\def \Q{\mathbb{Q}}
\def \R{\mathbb{R}}
\def \S{\mathbb{S}}
\def \Pr{\mathrm{P}}
\def\Ac{{\cal A}}
\def\Cc{{\cal C}}
\def\Fc{{\cal F}}
\def\Gc{{\cal G}}
\def\Hc{{\cal H}}
\def\Lc{{\cal L}}
\def\Mc{{\cal M}}
\def\Pc{{\cal P}}
\def\Sc{{\cal S}}
\def\Wc{{\cal W}}
\def\x{\times}
\def\Om{\Omega}
\def\om{\omega}
\def\0{\mathbf{0}}
\def \mub{\overline{\mu}}
\def \muh{\widehat{\mu}}
\def \mut{\widetilde{\mu}}
\def \nuh{\widehat{\nu}}
\def\normeL2#1{\left\|{#1}\right\|_{L^2}}
\def\Xh{\widehat X}
\def\esup{{\rm ess \, sup}}
\def \alphab {\boldsymbol{\alpha}}
\def \Ubb {\boldsymbol{U}}
\def \alephbb {\boldsymbol{\aleph}}
\def \Sbb{\mathbf{S}}
\def \Xbb{\mathbf{X}}
\def \Zbb{\mathbf{Z}}
\def \Wbb{\mathbf{W}}
\def \Qr{\mathrm{Q}}
\def \1{\mathds{1}}
\def \alphab {\boldsymbol{\alpha}}
\def \betab {\boldsymbol{\beta}}
\def \Sbb{\mathbf{S}}
\def \Xbb{\mathbf{X}}
\def \Zbb{\mathbf{Z}}
\def \Wbb{\mathbf{W}}
\def\Er{{\rm E}}
\def\Rr{{\rm R}}
\def\Cf{\mathfrak{C}}
\def\If{\mathfrak{I}}
\def \Qr{\mathrm{Q}}
\def \Rr{\mathrm{R}}
 \title{ Three connected problems: principal with multiple agents in cooperation, Principal--Agent with Mckean--Vlasov dynamics and multitask Principal--Agent
 
 %The problem of Principal with multiple agents in cooperation and its connections to Principal--agent problem with McKean--Vlasov dynamics and multitask Principal--agent problem
 }
\author{
 Mao Fabrice Djete\footnote{\'Ecole Polytechnique Paris, Centre de Math\'ematiques Appliqu\'ees, mao-fabrice.djete@polytechnique.edu. This work benefits from the financial support of the Chairs {\it Financial Risk} and {\it Finance and Sustainable Development}} 
    }
             \date{\today}
\begin{document}

\maketitle
 
\begin{abstract}
    In this paper, we address three Principal--Agent problems in a moral hazard context and show that they are connected. We start by studying the problem of Principal with multiple Agents in cooperation. The term cooperation is manifested here by the fact that the agents optimize their criteria through Pareto equilibria. We show that as the number of agents tends to infinity, the principal's value function converges to the value function of a McKean--Vlasov control problem. Using the solution to this McKean--Vlasov control problem, we derive a constructive method for obtaining approximately optimal contracts for the principal's problem with multiple agents in cooperation. In a second step, we show that the problem of Principal with multiple Agents turns out to also converge, when the number of agents goes to infinity, towards a new Principal--Agent problem which is the Principal--Agent problem with Mckean--Vlasov dynamics. This is a Principal--Agent problem where the agent--controlled production follows a Mckean-Vlasov dynamics and the contract can depend of the distribution of the production.  The value function of the principal in this setting is equivalent to that of the same McKean--Vlasov control problem from the multi--agent scenario. Furthermore, we show that an optimal contract can be constructed from the solution to this McKean--Vlasov control problem. We conclude by discussing, in a simple example, the connection of these problems with the multitask Principal--Agent problem which is a situation when a principal delegates multiple tasks that can be correlated to a single agent.
\end{abstract}
%\newpage

\vspace{3mm}
\no{\bf Keywords.} Cooperative equilibrium, multi--agent, moral hazard, Principal--Agent, McKean--Vlasov stochastic differential equations, convergence. 

\vspace{3mm}
\no{\bf MSC2010.} 60K35, 60H30, 91A13, 91A23, 91B30.

\section{Introduction}\label{sec:intro}

The Principal--Agent problem is a fundamental issue in economics and organizational theory that arises when one party (the principal) delegates work to another party (the agent), who is responsible for performing a task on the principal's behalf. The key challenge is that the agent may have different goals, preferences, or information than the principal, creating a potential for conflict. Because the principal cannot perfectly observe or control the agent's actions (moral hazard), the agent may act in their own self--interest, which may not align with the best interests of the principal. 

\medskip
This problem is especially relevant in situations involving contracts, incentives, and performance measurement, and it highlights the importance of designing mechanisms that align the agent’s behavior with the principal’s objectives.
From a game theory standpoint, the goal of aligning the interests of the principal and the agent translates into the search for a Stackelberg equilibrium. In simple terms, the principal, when offering a contract, anticipates the agent's optimal response. Based on this anticipated best response, the principal then seeks to maximize her own utility and identify the optimal contract.

\medskip
The Principal--Agent problem has been widely explored in economics, with an extensive literature addressing its mathematical formulation. Notable references, such as \citeauthor*{LaffontMartimort2002} \cite{LaffontMartimort2002}, \citeauthor*{SchmitzReview} \cite{SchmitzReview}, \citeauthor*{laffont1993theory} \cite{laffont1993theory}, and \citeauthor*{salanie1997economics} \cite{salanie1997economics}, offer a broad overview of the subject while tackling various questions it raises. Initially, the mathematical resolution of the Principal--Agent problem was confined to static or discrete--time settings. This changed with the seminal work of \citeauthor*{Milgrom1987} \cite{Milgrom1987}, who introduced a continuous--time approach. Building on the work did in \cite{Milgrom1987}, \citeauthor*{schattler1993first} \cite{schattler1993first,schattler1997optimal}, \citeauthor*{sung1995linearity} \cite{sung1995linearity}, and \citeauthor*{muller1998first} \cite{muller1998first,muller2000asymptotic} expanded the analysis to various scenarios.

\medskip
The use of stochastic control theory tools has advanced significantly thanks to the contributions of \citeauthor*{sannikov2008continuous} \cite{sannikov2008continuous, sannikov2012contracts}, who introduced innovative tools for tackling the continuous--time Principal--Agent problem. Inspired by this, \citeauthor*{cvitanic2015dynamic} \cite{cvitanic2015dynamic, cvitanic2014moral} developed a comprehensive framework for addressing the problem. Their approach can be summarized as follows: the agent's problem is solved using a Backward Stochastic Differential Equation (BSDE) based on the dynamic programming principle, while the principal's problem is framed as a stochastic control problem with two states  variables: the output controlled by the agent and the agent's continuation utility.

\medskip
An interesting extension of the traditional Principal--Agent framework is the problem of Principal with multiple Agents.  The problem of the principal with multiple agents is a contract theory problem, where a single principal engages with multiple agents, each tasked with performing separate or interdependent activities on the principal’s behalf. This setting introduces new complexities, primarily due to inter--agent relationships and coordination issues. This problem is sometimes referred to as the multi--Agent Principal--Agent problem. 
The key challenges arise from the potential for competition or cooperation among agents, free--riding, and the difficulty of allocating rewards when agents' efforts are interdependent. The principal must design contracts or incentive mechanisms that not only align the interests of each individual agent with their own objectives but also promote cooperation or healthy competition among agents when necessary.

\medskip
This model is commonly applied in corporate management, where an employer (Principal) must incentivize different departments or employees (Agents) to work together effectively. Solutions often focus on crafting contracts that consider joint output, relative performance, or team--based rewards, all while mitigating risks like shirking, miscommunication, or opportunistic behavior. 
The multi--Agent Principal--Agent problem has been addressed using various approaches and techniques in numerous papers and books, including \citeauthor*{holmstrom1982moral} \cite{holmstrom1982moral}, \citeauthor*{Mookherjee1984} \cite{Mookherjee1984}, \citeauthor*{green1983comparison} \cite{green1983comparison}, \citeauthor*{demski1984optimal} \cite{demski1984optimal}, \citeauthor*{SungOptimal2008} \cite{SungOptimal2008}, \citeauthor*{possamai2019contracting} \cite{possamai2019contracting}, $\cdots$

\medskip
In this paper, we consider a moral hazard setting. We are investigating the problem of Principal with multiple Agents in cooperation. Here, the term cooperation refers to the fact that the agents are optimizing their criterion through a Pareto equilibrium. We start by characterizing the problem of the principal with $n$ agents when $n$ goes to infinity. This characterization is done by first showing that the problem of the principal converges to a specific limit problem when $n$ goes to infinity. Second, by using the limit problem obtained, we can construct a sequence of contracts that are approximately optimal for the problem of Principal with $n$ Agents when $n$ is sufficiently large enough. The limit problem turns out to be a McKean--Vlasov optimal control problem. This convergence result is obtained by beginning to provide a reformulation of the problem of the principal with $n$ agents similarly to the one used in \cite{cvitanic2015dynamic} through dynamic programming and BSDEs. Then, with the help of the techniques developed in \citeauthor*{djete2019general} \cite{djete2019general}, we are able to show the convergence of the value function of the principal and construct approximately optimal contracts for the problem of Principal with $n$ Agents from an optimal control of the limit problem. This situation of cooperation creates new difficulties compared to the case of competition studied  with Nash equilibrium in \cite{possamai2019contracting},  \citeauthor*{mastroliaAtale2019} \cite{mastroliaAtale2019}, \citeauthor*{HubertMean2021} \cite{HubertMean2021}, \citeauthor*{DayanikliOptimal2022} \cite{DayanikliOptimal2022}, \citeauthor*{Carmona2018FiniteStateCT} \cite{Carmona2018FiniteStateCT}, \citeauthor*{MFD_PA_comp} \cite{MFD_PA_comp}, \citeauthor*{BergaultPA_minor_major_2024} \cite{BergaultPA_minor_major_2024}, $\cdots$ Indeed, one main advantage of the Nash equilibrium situation is the fact that, given a proposed contract to the $n$ agents,  a deviating agent/player has a negligible affect on this contract when $n$ goes to infinity. This is no longer the case in cooperation since we are no longer considering one deviating player. All the players can deviate.

\medskip
The problem of Principal with multiple Agents in cooperation has another natural connection which is the Principal--Agent problem with McKean--Vlasov dynamics. What we mean by Principal--agent problem with McKean--Vlasov dynamics is a Principal--Agent problem where the dynamics of the production, which is controlled by the agent, is of McKean--Vlasov type. Meaning, the distribution of the production and/or control appear in the coefficients. Furthermore, the contract proposed by the Principal does no longer depend just on a the production (moral harzard) but the distribution of the production can also be part of the contract offered by the Principal. To the best of our knowledge this type of Principal--Agent problem has never been investigated in the literature. We are able to show that the Principal--Agent problem with McKean--Vlasov dynamics is the limit of the problem of Principal with $n$ Agents when $n$ goes to infinity. {\color{black}Namely, any convergent sequence of contracts solving the problem of Principal with $n$ agents has a limit connected to a contract solving the Principal--Agent problem with McKean--Vlasov dynamics. Also, from any contract solving the Principal--Agent problem with McKean--Vlasov dynamics, we can construct a sequence of contracts that solve approximately the problem of Principal with $n$ agents when $n$ is large.}

\medskip
In addition, we study the Principal--Agent problem with McKean--Vlasov dynamics by giving a characterization of the optimal control of the agent, by providing the shape of the optimal contracts offered to the agent and by showing that the value function of the principal is equal to the value function of a McKean--Vlasov control problem similar to the one involved in the problem of Principal with multiple Agents in cooperation. This study raises some significant issues since the classical approach by dynamics programming and BSDEs developed in \cite{cvitanic2015dynamic} does not work anymore because of the distribution dependence. We overcome the difficulty with the help of an $n$--player approximation of the production/value function combining with the use of the weak limit of convergent sequence of BSDEs.

\medskip
Lastly, in a simple situation, we explore the Multitask Principal--Agent problem. It has been introduced in \citeauthor*{Holmstrom_Milgrom_multitask} \cite{Holmstrom_Milgrom_multitask}. The Multitask Principal--Agent problem emerges when a principal delegates multiple tasks to a single agent, each with potentially different characteristics in terms of difficulty, measurability, or importance. The challenge for the principal is to design incentives that encourage the agent to allocate effort across all tasks efficiently, rather than focusing only on tasks that are easier to measure or reward.

\medskip
An issue in this model can be the measurement problem: if some tasks are easier to observe or quantify, the agent may prioritize these tasks while neglecting others that are harder to measure but equally or more important. To address this, the principal must balance incentives, often opting for lower--powered incentives to prevent overemphasis on certain tasks. This problem is relevant in various fields: corporate management (where employees must balance short-term performance with long-term goals), education (where professors balance teaching, research, and service), contractual situations between technology companies like Google, Meta and content creator, $\cdots$. We revisit a simple example formulated in \cite{Holmstrom_Milgrom_multitask} for homogeneous tasks. While the interpretation is different, we will see that this problem is equivalent to the problem of Principal with multiple Agents in cooperation and solve it, when $n$ goes to infinity, by using the McKean--Vlasov control problem.

\medskip
The paper is structured as follows. After briefly reviewing some notations, \Cref{sec:main} outlines the framework considered and presents the main results. Specifically, \Cref{ssec:equidyn} formulates the problem of Principal with $n$ Agents and the corresponding limit problem i.e. the McKean--Vlasov control problem while stating the convergence results. In \Cref{section:PA_McKean}, the Principal--Agent problem with McKean--Vlasov dynamics is introduced and results are stated about its connection with the problem of Principal with $n$ Agents. Finally, \Cref{sec:multitask} is devoted to the discussion and resolution of a simple multitask Principal--Agent problem. All proofs are provided in \Cref{sec:proofs}.

\medskip
{\bf \large Notations}.
	$(i)$
	Given a {\color{black}Polish} space $(E,\Delta)$ and $p \ge 1,$ we denote by $\Pc(E)$ the collection of all Borel probability measures on $E$,
	and by $\Pc_p(E)$ the subset of Borel probability measures $\mu$ 
	such that $\int_E \Delta(e, e_0)^p  \mu(de) < \infty$ for some $e_0 \in E$. The topology induced by the weak convergence will be used for $\Pc(E)$ and, for $p \ge 1$, we equip $\Pc_p(E)$ with the Wasserstein metric $\Wc_p$ defined by
	\[
		\Wc_p(\mu , \mu') 
		~:=~
		\bigg(
			\inf_{\lambda \in \Lambda(\mu, \mu')}  \int_{E \x E} \Delta(e, e')^p ~\lambda( \mathrm{d}e, \mathrm{d}e') 
		\bigg)^{1/p},
	\]
	where $\Lambda(\mu, \mu')$ denotes the collection of all probability measures $\lambda$ on $E \x E$ 
	such that $\lambda( \mathrm{d}e, E) = \mu$ and $\lambda(E,  \mathrm{d}e') = \mu'( \mathrm{d}e')$. Equipped with $\Wc_p,$ $\Pc_p(E)$ is a Polish space (see \cite[Theorem 6.18]{villani2008optimal}). %For any  $\mu \in \Pc(E)$ and $\mu$--integrable function $\varphi: E \to \R,$ we define
	%\begin{align*}
	    %\langle \varphi, \mu \rangle
	    %=
	    %\langle \mu, \varphi \rangle
	    %:=
	    %\int_E \varphi(e) \mu(\mathrm{d}e),
	%\end{align*}
	%and for another metric space $(E^\prime,\Delta^\prime)$, we denote by $\mu \otimes \mu^\prime \in \Pc(E \x E')$ the product probability of any $(\mu,\mu^\prime) \in \Pc(E) \x \Pc(E^\prime)$.

\medskip
    \noindent $(ii)$ %Let $(S,\Delta)$ and $(S',\Delta')$ be two Polish space with the Borel $\sigma$--fields $\Fc$ and $\Fc'$ respectively. An application $V:S \to S'$ will be called universally measurable if $V$ is a measurable map from $(S, \Fc^U)$ to $(S',\Fc')$ where $\Fc^U$ is the universal completion of $\Fc$ i.e. $\Fc^U:=\cap_{\P \in \Pc(S)} \Fc^{\P}$ where $\Fc^\P$ is the $\P$--completed $\sigma$--field of $\Fc$.  
	Given a probability space $(\Om, \Hc, \P)$ supporting a sub--$\sigma$--algebra $\Gc \subset \Hc$ then for a Polish space $E$ and any random variable $\xi: \Om \longrightarrow E$, both the notations $\Lc^{\P}( \xi | \Gc)(\om)$ and $\P^{\Gc}_{\om} \circ (\xi)^{-1}$ are used to denote the conditional distribution of $\xi$ knowing $\Gc$ under $\P$.

\medskip
	\noindent $(iii)$	
	For a Polish space $E$, %and $(E',\Delta')$ two Polish spaces, we use $C_b(E,E')$ to denote the set of continuous functions $f$ from $E$ into $E'$ such that $\sup_{e \in E} \Delta'(f(e),e'_0) < \infty$ for some $e'_0 \in E'$. 
    we denote by $\M(E)$ the space of all Borel measures $q( \mathrm{d}t,  \mathrm{d}e)$ on $[0,T] \x E$, 
	whose marginal distribution on $[0,T]$ is the Lebesgue measure $ \mathrm{d}t$, 
	that is to say $q( \mathrm{d}t, \mathrm{d}e)=q(t,  \mathrm{d}e) \mathrm{d}t$ for a family $(q(t,  \mathrm{d}e))_{t \in [0,T]}$ of Borel probability measures on $E$.
	For any $q \in \M(E)$ and $t \in [0,T],$ we define $q_{t \wedge \cdot} \in \M(E)$ by $q_{t \wedge \cdot}(\mathrm{d}s, \mathrm{d}e) :=  q(\mathrm{d}s, \mathrm{d}e) \big|_{ [0,t] \x E} + \delta_{e_0}(\mathrm{d}e) \mathrm{d}s \big|_{(t,T] \x E},\; \text{for some fixed $e_0 \in E$.}$ %We will say that $q \in \M_0(E)$ if $q \in \M(E)$ and there exists a Borel map $[0,T] \ni t \mapsto \overline{e}(t) \in E$ such that $q=\delta_{\overline{e}(t)}(\mathrm{d}e)\mathrm{d}t$.
    In the case where $E$ is a normed space of norm $|\cdot|_E$, we will use the notation $|\cdot|$ for the norm regardless of the space $E$ when the space is obvious. For a measure $\lambda$ on $E$, we will refer to $\|\lambda\|_p$ for $\left(\int_E |e|^p \lambda(\mathrm{d}e) \right)^{1/p}.$
    %In the situation where $E$ is a normed space with the norm $|\cdot|_E$, we will use the notation $|\cdot|$ for the norm regardless of the space $E$ when the space is obvious.  %Given non--negative integers $m$ and $n$, we denote by $\S^{m \x n}$ the collection of all $m \x n$--dimensional matrices with real entries, equipped with the standard Euclidean norm, which we denote by $|\cdot|$ regardless of the dimensions. 
	%We also denote $\S^n:=\S^{n \times n}$, and denote by $0_{m \times n}$ the element in $\S^{m \times n}$ whose entries are all $0$, and by $\mathrm{I}_n$ the identity matrix in $\S^n$. 

\medskip
    \noindent $(iv)$
	Let $\N^*$ denote the set of positive integers. Let $T > 0$ and $(\Sigma,\rho)$ be a Polish space, we denote by $C([0,T]; \Sigma)$ the space of all continuous functions on $[0,T]$ taking values in $\Sigma$.
	When $\Sigma=\R^k$ for some $k\in\N^*$, we simply write $\Cc^k := C([0,T]; \R^k)$. %also we shall denote by $\Cc^{k}_{\Wc}:=C([0,T]; \Pc(\R^k)).$ 
	%When $k = 0$, the space $\Cc^k$ and $\Cc^{k,p}_{\Wc}$ ($\Cc^k_{\Wc}$) degenerate to a singleton.

\medskip
	With $k \in \N^*$ and a Polish space $E$,
	a map $h:[0,T] \x \R^k \x C([0,T];\Sigma) \x \M(E)$ is called progressively Borel measurable if it verifies $h(t,x,\pi,q)=h(t,x,\pi_{t \wedge \cdot},q_{t \wedge \cdot}),$ for any $(t,x,\pi,q) \in [0,T]\x \R^k \x C([0,T];\Sigma) \x \M(E).$

\section{Problem formulation and main results} \label{sec:main}

%\label{sec:pbfm}

The general assumptions used throughout this paper are now formulated. The nonempty Polish spaces $A \subset \R$ and $\Er \subset \R$, $1 \le  p < 2$ and the {\color{black}time horizon} $T>0$ are fixed. %We shall denote $\Pc_A$ for the space of all Borel probability measures on $\R \x A$, i.e. $\Pc_A:=\Pc(\R \x A).$ 
We set the probability space $(\Om,\H:=(\Hc_t)_{t \in [0,T]},\Hc,\P)$\footnote{ \label{footnote_enlarge}The probability space $(\Om,\H,\P)$ contains as many random variables as we want in the sense that: each time we need a sequence of independent uniform random variables or Brownian motions, we can find them on $\Om$ with the distribution $\P$ without mentioning an enlarging of the space. }, and $\nu \in \Pc(\R)$ having all its exponential moments finite i.e. $\int_\R \exp(a|u|)\nu(\mathrm{d}u)< \infty$ for any $a \ge 0$. We are given the following progressively Borel measurable functions:
	\[
		\left(b, L \right):[0,T] \x \R \x C([0,T];\Pc_p(\R)) \x \Er \x A \longrightarrow \R \x \R,\;\;{L}_{\Pr}:[0,T] \x \Er \longrightarrow \R\;\;\mbox{and}\;\;\sigma:[0,T] \x \R \to \R,
	\]
 and the Borel maps  $U:\R \to \R$, $\Upsilon:\R \to \R$, and $({g},{g}_{\Pr}):  C([0,T];\Pc_p(\R)) \x \Er \to \R \x \R$.

    \begin{assumption} \label{assum:main1} 
		
		%$(i)$ {\color{red}$A$ and $\Er$ are nonempty convex Polish sets}; %The sets $\Er$ and $\Ir$ are subsets of $\R$;
		
		%$(ii)$ $b$ and $\sigma$ are bounded continuous functions, and $\sigma_0 \in \S^{n \x \ell}$ is a constant;
		
		\medskip
		$(i)$ The map $[0,T] \x \R  \x \Pc_p(\R) \x \Er \x A \ni (t,x,\pi,e,a) \mapsto \left( b(t,x,\pi,e,a), \sigma(t,x) \right) \in \R \x \R $ is Lipschitz in $(x,\pi)$ uniformly in $(t,e,a)$, continuous in $(x,\pi,e,a)$ for any $t$, and with linear growth in $(a,e)$ uniformly in $(t,x,\pi)$;

		%and for $(x, \nub) \in \R^n \x \Pc(\R^n \x A)$ 
	    %$$
	    %    \big(b, \sigma \big) (.,x,.,\nub): (t,u) \in [0,T] \x A \to \big(b, \sigma \big) (t,x,u,\nub)
	    %$$
	    %is continuous uniformly in $(\xb, \nub)$
    \medskip
		$(ii)$ {\rm Non--degeneracy condition and boundedness:} %$\sigma$ satisfies:
        $0 < \inf_{(t,x)}  \sigma(t,x)^2 \le \sup_{(t,x)} \sigma(t,x)^2 < \infty$; 
        %and $\sigma_0 \in \S^{d}$ is an invertible matrix; %for some constant $\theta >0$, one has, for all $(t,x,\pi,m,u) \in [0,T] \x \R^n \x \Cc^n_{\Wc} \x \Pc^n_U \x U$,
		
    \medskip
		$(iii)$ %[0,T] \x \R^d \x \Cc_{\Wc} \x A \x \Pc_A \x \Er \x \Ir \x \R^d \ni
        The map $ (t,x,\pi,e,a) \mapsto L(t,x,\pi,e,a) $  is continuous in $(x,\pi,e,a)$ for each $t$, and there exist $\overline{c}, \widehat{c} > 0$ s.t. for all $(t,x,\pi,e,a)$,
        \begin{align*}
            -\overline{c}\left( 1 + |x|^2 + \sup_{s \le t}\|\pi(s)\|_2^2 + |e|^2 + |a|^2 \right) \le L(t,x,\pi,e,a) \le \overline{c}\left( 1 + |x|^p + \sup_{s \le t}{\color{black}\| \pi(s)\|_p^p} \right)- \widehat{c} \left(|e|^2 + |a|^2 \right)
        \end{align*}
        Also, the map $U$ has linear growth and the map $ (t,\pi,e,v,u) \mapsto \left( {L}_{\Pr}(t,e), (g,g_{\Pr})(\pi,e),\Upsilon(v),U(u) \right)$  is continuous in $(\pi,e,v,u)$ for each $t$.
        %, the map $e \mapsto g(m,e)$ is invertible for any $m$ and, for all $(t,x,e)$,
        %\begin{align*}
            %|\Upsilon(x)| \le \overline{c} \left( 1 + |x|^p \right),\;\; \overline{c}(1+ |e|^{2}) \ge L_{\Pr} \left( t, e \right) \ge \widehat{c} |e|^{2} - \overline{c}.
        %\end{align*}

        %with $p$--linear growth in $(x,m,v)$ and quadratic growth in $e$, uniformly in $(t,u).$
		%$$ 
		%	\big|L(t,x,\nub,u) \big| 
		%	\le 
		%	C_c \Big [ 1+ || x ||^p +  \int_{\R^n} || x'||^p \nub(\mathrm{d}x',A)   \Big ]
		%$$
		
	\end{assumption}

%Let $\widehat{\Om}:=C(\R_+)$ be  endowed with the compact convergence topology. Denote by $\widehat{X}=(\widehat{X}_t)_{t\ge 0}$ its coordinate process and by $\widehat{\F}^0=(\widehat{\Fc}^0_t)_{t\ge 0}$ its natural filtration, i.e. $\widehat{X}_t(\hat{\mathsf x}) :=\hat{\mathsf x}(t)$ for  $\hat{\mathsf x}\in\widehat{\Om}$ and $\widehat{\Fc}^0_t:=\sigma(\widehat{X}_s, s\le t)$. Define further  $\widehat{\F}=(\widehat{\Fc}_t)_{t\ge 0}$ to be the  filtration given by   $\widehat{\Fc}_t:=\lim_{s\downarrow t}\widehat{\Fc}_s^U$ with $\widehat{\Fc}_t^U:=\cap_{\mu\in\Pc(\widehat{\Om})} (\widehat{\Fc}_t^0)^{\mu}$, where $(\widehat{\Fc}_t^0)^{\mu}$ denotes the completion of $\widehat{\Fc}_t^0$ by $\mu$.

\subsection{The problem of the principal with $n$--player in cooperation}\label{ssec:equidyn}
In this section, we give the mathematical framework we will consider for the problem of the principal with $n$ agents in cooperation. We start by providing what we will call admissible contracts and admissible controls. On the filtered probability space $(\Om,\H,\Hc,\P),$ let $(W^i)_{i \in \N^*}$ be a sequence of independent $\R$--valued $\H$--adapted  Brownian motions, and $(\iota^i)_{i \in \N^*}$ a sequence of iid $\Hc_0$--random variables of law $\nu.$ Besides, $(W^i)_{i \in \N^*}$ and $(\iota^i)_{i \in \N^*}$ are independent. 

\paragraph*{Admissible contracts}
We will say $\Cf^n=(\xi^n,\alephbb^n=(\aleph^{1,n},\cdots,\aleph^{n,n}))$ is a contract if $\xi^n:\Cc^n \to \Er$ is Borel measurable, $\aleph^{i,n}:[0,T] \x \Cc^n \to \Er$ is progressively Borel measurable and
\begin{align*}
    \E \left[ \exp{ \left(a |g \left(\nu^n,\xi^n(\Ubb^n)\right)| + \sum_{i=1}^n\int_0^T b|\aleph^{i,n}(t,\Ubb^n)|^2 \mathrm{d}t \right) } \right]< \infty,\;\mbox{for any }a,b \ge 0,
\end{align*}
where $\Ubb^n=(U^1,\cdots,U^n)$ with $U^i_\cdot=\iota^i + \int_0^\cdot\sigma(t,U^i_t)\mathrm{d}W^i_t$ and $\nu^n_t:=\frac{1}{n} \sum_{i=1}^n \delta_{U^i_t}$. We denote by $\Xi_n$ the set of all contracts.

\paragraph*{\color{black}Admissible controls}
We denote by $\Ac_n$ the set of progressively Borel maps $\beta:[0,T] \x \Cc^n \to A$. We say $\betab=(\beta^1,\cdots,\beta^n) \in \Ac_n^n$ is admissible if:
\begin{align*}
    \E \left[ \exp{ \left(\sum_{i=1}^n\int_0^T b|\alpha^{i,n}(t,\Ubb^n)|^2 \mathrm{d}t \right) } \right]< \infty,\;\mbox{for any }b \ge 0.
\end{align*}%for some $q>1$,
%\begin{align*}
    %\E \left[ |L_T^{n}|^2 \right]< \infty\;\;\mbox{and}\;\;\E^{\P^n} \left[ \frac{1}{n} \sum_{i=1}^n \int_0^T \left| \aleph^{i,n}(t,\Ubb^n) \right|^2 + \left| \beta^i(t,\Ubb^n)) \right|^2 \mathrm{d}t \right]< \infty,
%\end{align*}
%where $L^n_0=1$, $\frac{\mathrm{d}L^{n}_t}{L^{n}_t}=\sum_{i=1}^n \sigma(t,U^i_t)^{-1} b(t,U^i_t,\mu^n_t,\aleph^{i,n}(t,\Ubb^n),\beta^i(t,\Ubb^n))\mathrm{d}W^i_t$ and $\mathrm{d}\P^n:=L^n_T \mathrm{d}\P$. We will write $\Ac_n(\Cf^n)$ the set of admissible controls associated to $\Cf^n$.  %and $\overline{\varphi}_t[\betab]:=\frac{1}{n}\sum_{i=1}^n \delta_{(U^i_t, \beta^i(t,\Ubb^n))}$.

\begin{remark}
    $(i)$ In a contract $\Cf^n=(\xi^n,\alephbb^n)$ proposed by the principal, the map $\xi^n$ refers to the final payment received by the $n$ agents and the maps $\alephbb^n=(\aleph^{1,n},\cdots,\aleph^{n,n})$ are the instantaneous payments given to the $n$ agents at time $t \in [0,T]$.

    $(ii)$ It is worth mentioning that the integrability conditions over the contracts and the controls can be weaken $($see for instance {\rm \Cref{section:PA_McKean}} for the Principal--Agent problem with McKean--Vlasov dynamics$)$. We chose these conditions to be consistent with the literature and because they do not have a major impact on most of our results.
\end{remark}

\subsubsection{The $n$--agent problem}
Let $n \ge 1$. We start by providing here the problem faced by the $n$ agents. Given an admissible contract ${\Cf}^n:=\left(\xi^n,\alephbb^n=(\aleph^{i,n})_{1 \le i \le n} \right)$ and the admissible controls $\alphab^n:=(\alpha^{1,n},\dots,\alpha^{n,n}) \in \Ac_n^n$, we denote by $\Xbb^{\alephbb^n,\alphab^n}:=\Xbb=(X^1_{\cdot},\dots,X^n_{\cdot})$ the process satisfying: for each $1 \le i \le n,$ $X^i_0:=\iota^i$, and $\P$--a.e.
    \begin{align} \label{eq:N-agents_StrongMV_CommonNoise-law-of-controls}
        \mathrm{d}X^i_t
        =
        b\left(t,X^i_{t}, \mu^n ,\aleph^{i,n}(t,\Xbb),\alpha^{i,n}(t,\Xbb) \right) \;\mathrm{d}t 
        +
        \sigma(t,X^i_t) \mathrm{d}W^i_t\;\;\mbox{with}\;\mu^n=(\mu^n_t)_{t \in [0,T]}\;\mbox{and}\;\mu^n_t := \frac{1}{n}\sum_{i=1}^n \delta_{X^{i}_{t}}.
    \end{align}
The process $\Xbb$ has to be seen as the production of the agents which is managed by using the controls $\alphab^n$. The reward value associated with the contract $\Cf^n=(\xi^n,\alephbb^n)$ and the control rule/strategy $\alphab^n:=(\alpha^{1,n},\dots,\alpha^{n,n})$ is then defined by
	\begin{align*}
	    J^{\Cf^n}_{n}(\alphab^n)
        :=
        \E\left[ \Rr^{\Cf^n}_n(\alphab^n) \right],\;
	    \Rr^{\Cf^n}_n(\alphab^n):=\frac{1}{n} \sum_{i=1}^n
        \int_0^T 
        L\left(t,X^{i}_t,\mu^n,\aleph^{i,n}(t,\Xbb),\alpha^{i,n}(t,\Xbb) \right)\;\mathrm{d}t 
        +
        g\left( \mu^n,\xi^n(\Xbb) \right)
        .
	\end{align*}

\begin{remark}
    It is worth mentioning that {\rm\Cref{eq:N-agents_StrongMV_CommonNoise-law-of-controls}} is not necessarily well--posed in the strong sense. This is essentially due to the assumptions over the coefficients and the fact that $\alephbb^n$ and $\alphab^n$ are just Borel measurable. However,  by using Girsanov's Theorem, we can see that {\rm\Cref{eq:N-agents_StrongMV_CommonNoise-law-of-controls}} is uniquely defined in distribution. The process $\Xbb$ should thus be regarded as a weak solution. While it may be necessary to extend  $(\Om,\H,\P)$ to express $\Xbb$, for simplicity and to avoid cumbersome notation, we have assumed that $(\Om,\H,\P)$  is already suitable, requiring no further extensions $($see {\rm \Cref{footnote_enlarge}}$)$.
\end{remark}

We are giving now the notion of equilibrium used here. The meaning of agents in cooperation is manifested by the use of this notion of equilibrium.
\begin{definition}{\rm (Pareto Equilibrium)} \label{def:Nplayers--equilibria}

\medskip    
    Let $n \ge 1$  and the contract $\Cf^n:=(\xi^n,\alephbb^n)$. We will say that an admissible control rule/strategy $(\alpha^{1,n},\dots,\alpha^{n,n}) \in \Ac_n^n$ is an {\rm equilibrium} if 
	    \[
	        J^{\Cf^n}_{n}(\alpha^{1,n},\dots,\alpha^{n,n}) \ge \sup_{(\beta^1,\cdots,\beta^n) \in \Ac_n^n} J^{\Cf^n}_{n}\big( \beta^1,\cdots,\beta^n \big).
	    \]    
\end{definition}
We set $\Ac_n^{\star,n}(\Cf^n)
    :=
    \left\{ \alphab^n=(\alpha^{1,n},\cdots,\alpha^{n,n})\mbox{ an equilibrium given }\Cf^n=(\xi^n,\alephbb^n) \right\}.$
%\begin{align*}
    %\mbox{\rm PE}[\Cf^n]
    %:=
    %\left\{ \alphab^n=(\alpha^{1,n},\cdots,\alpha^{n,n})\mbox{ an equilibrium given }\Cf^n=(\xi^n,\alephbb^n) \right\}.
%\end{align*}
%In addition, we denote by $\mbox{\rm NE}^{\rm dist}[\Cf^n,\varepsilon]$ the subset of $\mbox{\rm NE}[\Cf^n,\varepsilon]$ containing all the $n$--tuple $\alphab^n$ belonging to $\Dc_n$. 

\subsubsection{The Principal problem}

For a contract $\Cf^n$, when $\Ac_n^{\star,n}(\Cf^n)$ is non-empty, it can contained multiple solutions. In such cases, the principal is responsible for selecting the agents' controls, as commonly done in the literature. The interpretation is twofold: first, the principal must ensure that the agents will accept the proposed contract and anticipate their likely responses. Second, once the contract is offered, the principal also suggests a specific course of action to the agents. However, there exists a minimum level of utility, known as the reservation utility and denoted $R \in \R$ below which the agents will reject the contract. With this mechanism in mind, we then proceed to formulate the Principal's problem.

\medskip
Let us fixed $R \in \R$. Given a contract $\Cf^n$, an optimal control $\alphab^n \in \Ac_n^{\star,n}(\Cf^n)$ satisfies the reservation utility if $\E\left[ \Rr^{\Cf^n}_n(\alphab^n) \big| \Xbb_0 \right] \ge R$ a.e. We denote by ${\rm PE}[\Cf^n]$ all this type of admissible controls. The problem faced by the Principal is
\begin{align*}
    V_{n,\Pr}
    :=
    \sup_{\Cf^n \in \Xi_n} \;\;\sup_{\alphab^n \in {\rm PE}[\Cf^n]} J_{n,\Pr}^{{\color{black}\alphab^n}}(\Cf^n)%\;\;\;\mbox{and}\;\;\; V^{\rm dist}_{n,\Pr}
    %:=
    %\sup_{\Cf^n \in \Xi^{\rm dist}_{n}}\;\;\sup_{\alphab^n \in \underline{\rm PE}^{\rm dist}[\Cf^n]} J_{n,\Pr}^{{\color{black}\alphab^n}}(\Cf^n)%J_{n,\Pr}^{\Cf^n}({\color{black}\alphab^n}).
\end{align*}
where the reward of the Principal is
\begin{align*}
    J_{n,\Pr}^{{\color{black}\alphab^n}}(\Cf^n)
    %=
    %J_{n,\Pr}^{\Cf^n}({\color{black}\alphab^n})
    :=
    &\E \left[ U \left( \frac{1}{n} \sum_{i=1}^n \Upsilon \left(X^i_T\right)
    -g_{\Pr} \left(\mu^n,\xi^n(\Xbb) \right)
    -
    \int_0^T L_{\Pr} \left(t,\aleph^{i,n}(t,\Xbb) \right)\;\mathrm{d}t\right) \right].
\end{align*}

\begin{remark}
    If we follow the literature on the Principal--Agent problem, the usual condition we must consider for the minimum level of utility of the agents is $J^{\Cf^n}_{n}(\alphab^n) \ge R$. However, this condition is not appropriate here since the initial production $\Xbb_0$ is not deterministic as in the classical literature. The randomness of $\Xbb_0$ needs to be taken into consideration, hence the use of the condition we mentioned.
\end{remark}

\subsubsection{Control McKean--Vlasov formulation}
 \label{ssec:MFGformulation}

As we will see, the problem of the principal with $n$ agents is related to a McKean--Vlasov control problem when $n$ goes to infinity. Let us descrite here the limit problem in question. 
We introduce the map $\widehat{\alpha}:[0,T] \x \R \x C([0,T];\Pc_p(\R)) \x \Er \x \R \to A$ by: 
%for each $t$,  and $(\xbb=(x^1,\cdots,x^n), \ebb=(e^1,\cdots,e^n), \zbb=(z^1,\cdots,z^n))$, 
for any $(t,x,\pi,e,z)$, $\widehat{\alpha}(t,x,\pi,e,z)$ is the $\underline{ {\rm unique\;maximizer}}$ of the Hamiltonian %{\color{red} (continuity of $\widehat{\alpha}$ ?)} 
\begin{align} \label{eq:maximizee}
    A \ni a \mapsto  b \left(t,x,\pi,e,a \right) z +  L \left(t,x,\pi,e,a \right) \in \R.
\end{align}
For each  $(t,x,\pi,e,z),$ we set $(\widehat{b}, \widehat{L} ) \left( t, x,\pi, e , z \right):= \left( b, L \right) \left(t,x, \pi,e, \widehat{\alpha} \left( t, x,\pi, e , \sigma(t,x)^{-1} z\right) \right)$
%\begin{align*}
    %(\widehat{b}, \widehat{L} ) \left( t, x,m, e , z \right):= \left( b, L \right) \left(t,x, m,e, \widehat{\alpha} \left( t, x,m, e , \sigma^{-1}(t,x) z\right) \right)
    %\;\;\mbox{and}\;\;\widehat{L} \left( t, x,m, e , z \right):= L \left(t,x,m,e, \widehat{\alpha} \left( t, x,m, e , \sigma^{-1}(t,x) z \right) \right).
%\end{align*}
and
\begin{align*}
    H\left( t, x,\pi, e , z\right)
    :=
    \widehat{b}\left( t, x,\pi, e , z\right) \sigma(t,x)^{-1}z + \widehat{L}\left( t, x,\pi, e , z \right).
\end{align*}
We also define $h(t,x,\pi,e,z,a):=b \left(t,x,\pi,e,a \right) \sigma^{-1}(t,x)z +  L \left(t,x,\pi,e,a \right)$.
\begin{assumption} \label{assump:growth_coercivity}
    \medskip
		The map $e \mapsto g(\pi,e)$ is invertible for any $\pi$. %For all $(t,x,\pi,e)$, the map $z \mapsto H(t,x,\pi,e,z)$ is convex. 
        The map $(t,x,\pi,e,z) \mapsto (\widehat{b}, \widehat{L} )(t,x,\pi,e,z)$ is continuous in $(x,\pi,e,z)$ for each $t$, the map $(t,x,\pi,e,z) \mapsto \widehat{b}(t,x,\pi,e,z)$ is Lipschitz in $(x,\pi)$ uniformly in $(t,e,z)$, and with the positive constants $\overline{c},$ for all $(t,x,\pi,e,z)$
    \begin{align*}
       |\widehat{\alpha}(t,x,\pi,e,z)| \le \overline{c}\left(1 + |x| + \sup_{s \le t}\|\pi(s)\|_p + |e| + |z| \right).
    \end{align*}

\end{assumption}
We introduce $\widehat{g}_{\Pr}(\pi,y):=g_{\Pr}(\pi,g^{-1}(\pi,y))$ for all $(\pi,y)$ where $g^{-1}(\pi,\cdot)$ is the inverse of $y \mapsto g(\pi,y)$ for each $\pi$.

\begin{assumption} \label{assump:growth_coercivity1}
   { \rm (Coercivity)}
    The map $U$ is non--decreasing, concave and satisfies $\lim_{k \to -\infty} U(k)=-\infty$. With the positive constants $\overline{c}, \widehat{c}$, for all $(t,x,\pi,e,z),$ %$\lambda=m(\R, \mathrm{d}e',\mathrm{d}z')$, $\pi=m(\mathrm{d}x', \Er \x \R)$,
\begin{align*}
     \overline{c} \left( 1 + |x|^{p} +  \sup_{s \le t}\|\pi(s)\|^p_p  \right) - \widehat{c} \left(|e|^{2} + |z|^{2} \right) \ge \widehat{L} \left( t, x,\pi, e,  z \right) \ge  -\overline{c}\left(  1 + |x|^{2} + \sup_{s \le t}\| \pi(s) \|_{2}^{2} + |e|^{2} + |z|^{2} \right)
\end{align*}
and $|\Upsilon(x)| \le \overline{c} \left( 1 + |x|^p \right),\;\; \overline{c}(1+ |e|^{2}) \ge L_{\Pr} \left( t, e \right) \ge \widehat{c} |e|^{2} - \overline{c}$,
\begin{align*}
    \overline{c}\left(1+ \sup_{t \le T}\|\pi(t)\|_p^p + |y|  \right) \ge \widehat{g}_{\Pr}(\pi,y) \ge \widehat{c} y-\overline{c}\left(1+ \sup_{t \le T} \|\pi(t)\|_p^p \right).
\end{align*}
%and 
%\begin{align*}
    %|\Upsilon(x)| \le C \left( 1 + |x|^p \right),\;\; C(1+ |e|^{2}) \ge L_{\Pr} \left( t, e \right) \ge C_{\Pr} |e|^{2} - C
%\end{align*}
%    for some positive constants $C, C_{\Pr} > 0.$
\end{assumption}

\medskip
 Under these assumptions, we can provide the limit problem of $V_{n,\Pr}$ when $n \to \infty$.
 We will say that $(\gamma, \aleph)$ is admissible and belongs to $\Ac$ if the map $(\gamma,\aleph):[0,T] \x \R \to \R \x \Er$ is Borel and, the process $X^{\gamma,\aleph}:=X$ is well defined i.e. $X_0 = \iota$, %$\E \left[ \exp \left(\int_0^T a|\aleph(t,X_t)| \right) \mathrm{d}t \right] < \infty$ for each $a$,
 $\E \left[\int_0^T |\aleph(t,X_t)|^2 + |\gamma(t,X_t)|^2 \mathrm{d}t \right] < \infty$,
\begin{align*}
   \mathrm{d}X_t =  \widehat{b}\left(t,X_{t},\mu,\aleph(t,X_t),\gamma(t,X_t)  \right) \;\mathrm{d}t 
        +
        \sigma(t,X_t) \mathrm{d}W_t\;\;\mbox{with}\;\;\mu=(\mu_t)_{t \in [0,T]}\;\mbox{and}\;\mu_t=\Lc(X_t).
\end{align*}
With the process $X^{\gamma,\aleph}$ and a constant $Y_0$ s.t. $Y_0 \ge R$, we define $Y^{\gamma,\aleph}$ by
\begin{align*}
    Y^{\gamma,\aleph}_\cdot
    :=
    Y_0 - \E \left[\int_0^\cdot \widehat{L}\left(t,X^{\gamma,\aleph}_t,\mu,\aleph(t,X^{\gamma,\aleph}_t),\gamma(t,X^{\gamma,\aleph}_t) \right)\;\mathrm{d}t \right].
\end{align*}
We can now introduce the value function given by 
\begin{align} \label{eq:limit_value}
    \widehat{V}
    :=
    \sup_{(\gamma,\aleph) \in \Ac} \widehat{J}(\gamma,\aleph)\;\mbox{where}\;\widehat{J}(\gamma,\aleph)
    :=
    \E \left[  \Upsilon \left(X^{\gamma,\aleph}_T\right)
    -\widehat{g}_{\Pr} \left(\mu,Y^{\gamma,\aleph}_T \right)
    -
    \int_0^T L_{\Pr} \left(t,\aleph(t,X^{\gamma,\aleph}) \right)\;\mathrm{d}t 
    \right].
\end{align}
%\begin{align*}
    %\widehat{J}(\gamma,\aleph)
    %:=
    %\E \left[  \Upsilon \left(X^{\gamma,\aleph}_T\right)
    %-\widehat{g}_{\Pr} \left(\mu,Y^{\gamma,\aleph}_T \right)
    %-
    %\int_0^T L_{\Pr} \left(t,\aleph(t,X^{\gamma,\aleph}) \right)\;\mathrm{d}t \right]
%\end{align*}
%where $\widehat{g}_{\Pr}(m,y):=g_{\Pr}(m,g^{-1}(m,y))$ for all $(m,y)$,
%and the value function is given by
\begin{theorem} \label{thm:cong_PA_comp}
    Under {\rm \Cref{assum:main1}}, {\rm \Cref{assump:growth_coercivity}} and {\rm \Cref{assump:growth_coercivity1}}, $\lim_{n \to \infty} V_{n,\Pr} =U (\widehat{V})$.
\end{theorem}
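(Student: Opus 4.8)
\textbf{Step 1: reducing $V_{n,\Pr}$ to a control problem.} Fix $n$. By \Cref{def:Nplayers--equilibria} an equilibrium is, by definition, a joint maximiser of the common symmetric criterion $(\beta^1,\dots,\beta^n)\mapsto J^{\Cf^n}_n$, so the agents' problem is a single weak stochastic control problem on the product system $\Xbb$. Applying the BSDE/dynamic-programming scheme of \cite{cvitanic2015dynamic}, the agents' optimal collective response is characterised by a BSDE whose optimised driver is the Hamiltonian \eqref{eq:maximizee}, with unique maximiser $\widehat\alpha$; the contract is thereby reparametrised by $(Y_0,\aleph,\gamma)$, where $Y_0$ is the initial value of the collective continuation utility $Y$, $\aleph$ are the running payments, and $\gamma$ is the volatility (sensitivity) control, the equilibrium effort being $\widehat\alpha(\cdot,\sigma^{-1}\gamma)$ and the terminal payment being recovered as $\xi^n=g^{-1}(\mu^n,Y_T)$ (this inverse exists by \Cref{assump:growth_coercivity}). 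The conditional participation constraint $\E[\Rr^{\Cf^n}_n(\alphab^n)\mid\Xbb_0]\ge R$ becomes the constant constraint $Y_0\ge R$. A two-sided argument --- building admissible contracts from reduced controls, and conversely extracting a reduced control from any contract together with one of its equilibria --- gives $V_{n,\Pr}=\mathfrak V_n$, where
\[ \mathfrak V_n := \sup\big\{\, \E[U(\Gamma_n)] \;:\; (\gamma^i,\aleph^i)_{i\le n}\ \text{admissible},\ Y_0\ge R\,\big\}, \]
with $\Gamma_n:=\frac1n\sum_{i\le n}\big(\Upsilon(X^i_T)-\int_0^T L_{\Pr}(t,\aleph^i(t,\Xbb))\,\mathrm{d}t\big)-\widehat{g}_{\Pr}(\mu^n,Y_T)$, the $X^i$ solving $\mathrm{d}X^i_t=\widehat b(t,X^i_t,\mu^n,\aleph^i(t,\Xbb),\gamma^i(t,\Xbb))\,\mathrm{d}t+\sigma(t,X^i_t)\,\mathrm{d}W^i_t$ with $\mu^n_t=\frac1n\sum_i\delta_{X^i_t}$, and $Y$ the collective continuation utility built from $\frac1n\sum_i\widehat L$. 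The integrability built into $\Xi_n$ together with the coercivity/growth bounds of \Cref{assum:main1} and \Cref{assump:growth_coercivity1} make everything well defined. The cooperative structure is precisely what forces this collective detour: unlike the Nash case of \cite{possamai2019contracting}, no ``negligible deviation of a single agent'' simplification is available.

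\textbf{Step 2: lower bound $\liminf_n V_{n,\Pr}\ge U(\widehat V)$.} Fix $\varepsilon>0$, choose $(\gamma,\aleph)\in\Ac$ with $Y_0\ge R$ and $\widehat J(\gamma,\aleph)\ge\widehat V-\varepsilon$, and feed into $\mathfrak V_n$ the symmetric feedback controls $\gamma^i(t,\xbb):=\gamma(t,x^i)$, $\aleph^i(t,\xbb):=\aleph(t,x^i)$. By propagation of chaos for the McKean--Vlasov SDE --- the coefficients being Lipschitz in $(x,\pi)$ by \Cref{assum:main1} and \Cref{assump:growth_coercivity} --- one has $\mu^n\to\mu$ and the $X^i$ are asymptotically i.i.d.\ copies of $X^{\gamma,\aleph}$, so the law of large numbers yields $\Gamma_n\to\Upsilon(X^{\gamma,\aleph}_T)-\widehat{g}_{\Pr}(\mu,Y^{\gamma,\aleph}_T)-\int_0^T L_{\Pr}(t,\aleph(t,X^{\gamma,\aleph}))\,\mathrm{d}t$ in probability. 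The linear growth of $U$ and the $L^p$/quadratic bounds in \Cref{assump:growth_coercivity1} supply the uniform integrability needed to pass from convergence in probability to $\E[U(\Gamma_n)]\to U(\widehat J(\gamma,\aleph))$. Hence $\liminf_n V_{n,\Pr}\ge U(\widehat V-\varepsilon)$; let $\varepsilon\downarrow0$ and use that $U$ is non-decreasing and continuous.

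\textbf{Step 3: upper bound $\limsup_n V_{n,\Pr}\le U(\widehat V)$.} Concavity of $U$ and Jensen give $\E[U(\Gamma_n)]\le U(\E[\Gamma_n])$, so, $U$ being non-decreasing and continuous, it suffices to show $\limsup_n\E[\Gamma_n]\le\widehat V$, i.e.\ that the \emph{linear} ($U$-free) principal problem converges to $\widehat V$. Pick near-optimal reduced controls for $\mathfrak V_n$ and introduce the empirical measure $\frac1n\sum_i\delta_{(X^i,Y,\aleph^i,\gamma^i)}$ on path space; the coercive lower bounds on $\widehat L$, $L_{\Pr}$, $\widehat{g}_{\Pr}$ and the penalisation of $|e|^2+|z|^2$ in \Cref{assump:growth_coercivity1} deliver uniform moment estimates, hence tightness. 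Invoking the stability results of \cite{djete2019general} for controlled McKean--Vlasov dynamics, together with the weak stability of the associated continuation-utility equations, every subsequential weak limit is an admissible pair for the limit problem with the constraint $Y_0\ge R$ preserved; lower semicontinuity of the costs then gives $\limsup_n\E[\Gamma_n]\le\widehat J\le\widehat V$. Combined with Step 2, this proves $\lim_n V_{n,\Pr}=U(\widehat V)$.

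\textbf{Expected main obstacle.} The two hard points are (i) the BSDE reduction of Step 1 for the \emph{collective} Pareto problem with measure-dependent, merely continuous coefficients and the conditional participation constraint, and (ii) the limit identification in Step 3 --- proving that the weak limit of the $n$ controlled systems and their continuation-utility BSDEs is a genuine admissible control of $\widehat V$. Point (ii) is where the machinery of \cite{djete2019general} and the uniform integrability coming from \Cref{assump:growth_coercivity1} are indispensable, and it is genuinely harder than in the Nash setting precisely because all agents may deviate simultaneously.
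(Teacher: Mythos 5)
Your proposal is correct and follows essentially the same route as the paper: Step 1 is the content of \Cref{prop:characterization_agent} and \Cref{prop:reformulation_principal} (BSDE characterization of the Pareto equilibrium and reformulation of $V_{n,\Pr}$ as the control problem $\widehat V_{n,\Pr}$ over $(\alephbb^n,\Zbb^n)$ with $Y_0\ge R$), and Steps 2--3 reproduce the two bounds of \Cref{prop:conv-principal} (propagation of chaos \`a la Lacker for the lower bound; coercivity estimates, tightness and limit identification via \cite{djete2019general} for the upper bound). The only cosmetic differences are that the paper handles the upper bound by decomposing the weak limit into a mixture of admissible controls and using monotonicity of $U$ rather than Jensen, and that both your Step 2 and your Step 3 implicitly rely on the truncation/bounded-Lipschitz and Markovian approximation results (the paper's \Cref{prop:weak_appr} and \Cref{prop:markovian_appr}) to pass between general admissible controls, relaxed limits, and elements of $\Ac$.
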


\begin{remark}
    $(i)$ To the best of our knowledge, the convergence of the problem of the principal with $n$ agents in cooperation when $n$ goes to infinity has never been formulated or study in the literature. {\rm \Cref{thm:cong_PA_comp}} gives an answer to this question. Some assumptions may be weaken such as the uniqueness of the maximizer of the Hamiltonian $h$. But in order to keep the formulation and statement clear and simple, we have chosen to stick with this assumption.

    \medskip
    $(ii)$ With additional assumptions on the coefficients, it is indeed possible to give a rate for this convergence result as in {\rm \Cref{prop:multitask}} $($see below$)$. As we want to avoid a too heavy presentation, we have not presented this type of result
\end{remark}

Let us now show how we can use the solution of the McKean--Vlasov control problem to approximately solve the problem of Principal with $n$ Agents when $n$ is large. Let $(\gamma,\aleph) \in \Ac$ be an optimal solution for $\widehat{V}$ s.t.
\begin{align} \label{eq:opti_control_cond}
    \E \left[\exp \left(\int_0^T a |\aleph(t,U_t)|^2 + b |\gamma(t,U_t)|^2 \mathrm{d}t \right) \right]< \infty,\;\;\mbox{for each }a,b \ge 0
\end{align}
with $U_0=\iota$ and $\mathrm{d}U_t=\sigma(t,U_t)\mathrm{d}W_t$. Let $\gamma^\ell(\cdot):=\gamma(\cdot) \wedge \ell$, $\aleph^\ell(\cdot):=\aleph (\cdot) \wedge \ell,$ $\Xbb^{\ell,n}:=\left(X^{1},\cdots,X^{n} \right)$ be the solution of 
\begin{align*}
    \mathrm{d}X^{i}_t =  \widehat{b}\left(t,X^{i}_{t}, \mu^{\ell,n},\aleph^\ell(t,X^{i}_t),\gamma^\ell(t,X^{i}_t)  \right) \;\mathrm{d}t 
    +
    \sigma(t,X^{i}_t) \mathrm{d}W^i_t
\end{align*}
with $\mu^{\ell,n}_t=\frac{1}{n} \sum_{i=1}^n \delta_{X^{i}_t}$.
We set
\begin{align*}
    &Y^{\ell,n}_\cdot
    :=
    Y^{\gamma,\aleph}_0 -\frac{1}{n} \sum_{i=1}^n\int_0^\cdot H\left(t,X^{i}_t,\mu^{\ell,n},\aleph^\ell(t,X^{i}_t),\gamma^\ell(t,X^{i}_t) \right)\;\mathrm{d}t + \frac{1}{n} \sum_{i=1}^n \sigma(t,X^{i}_t)^{-1}\gamma(t,X^{i}_t) \mathrm{d}X^{i}_t,
    \\
    &\;\;\xi^{\ell,n}(\Xbb^{\ell,n}):=g^{-1}\left(\mu^{\ell,n},Y^{\ell,n}_T \right)\;\;\mbox{and}\;\;\aleph^{\ell,i}(t,\Xbb^{\ell,n}):=\aleph^\ell(t,X^{i}_t).
\end{align*}
We consider for each $\ell, n \ge 1$, the contract $\Cf^{\ell,n}:=\left(\xi^{\ell,n},(\alephbb^{\ell,n}:=\aleph^{\ell,i})_{1 \le i \le n} \right)$ and we also define the control $\alphab^{\ell,n}:=\left( \alpha^{\ell,1},\cdots, \alpha^{\ell,n} \right)$ by
\begin{align*}
        \alpha^{\ell,i}\left( t, \Xbb^{\ell,n}\right)
        :=
        \widehat{\alpha}\left(t,X^{i}_{t}, \mu^{\ell,n},\aleph^{\ell,i}(t,\Xbb^{\ell,n}),\sigma(t,X^{i}_t)^{-1}\gamma(t,X^{i}_t)  \right).
    \end{align*}
\begin{theorem} \label{thm:fromLimit}
    Under {\rm \Cref{assum:main1}},  {\rm \Cref{assump:growth_coercivity}} and {\rm \Cref{assump:growth_coercivity1}}, the sequence of contracts $(\Cf^{\ell,n})_{\ell, n \ge 1}$ and the sequence of controls $(\alphab^{\ell,n})_{\ell,n \ge 1}$ are s.t. 
    \begin{align*}
        \lim_{\ell \to \infty}\lim_{n \to \infty} \left| J_{n,\Pr}^{{\color{black}\alphab^{n}}}(\Cf^{\ell,n}) - V_{n,\Pr} \right|=0
    \end{align*}
    and for each $\ell, n \ge 1$, $\alphab^{\ell,n} \in \mbox{\rm PE}[\Cf^{\ell,n}]$ i.e. $\alphab^{\ell,n}$ is admissible,  $\E\left[ \Rr^{\Cf^n}_n(\alphab^n) \big| \Xbb_0 \right] \ge R$ a.e. and
    \begin{align*}
        J^{\Cf^{\ell,n}}_{n}(\alphab^{\ell,n}) = \sup_{\beta^1,\cdots,\beta^n}J^{\Cf^{\ell,n}}_{n}(\beta^1,\cdots,\beta^n).
    \end{align*}
\end{theorem}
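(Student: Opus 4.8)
The plan is to verify separately the two claims: (a) for each fixed $\ell, n$, the control $\alphab^{\ell,n}$ is an equilibrium for $\Cf^{\ell,n}$ satisfying the reservation utility; (b) the double limit of the principal's rewards along this sequence equals $\lim_{n\to\infty} V_{n,\Pr}$, which by \Cref{thm:cong_PA_comp} equals $U(\widehat V)$.

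For (a), I would argue exactly as in the classical Cvitani\'c--Possama\"i--Touzi construction, which is why the contract $\xi^{\ell,n}$ is defined through $g^{-1}$ of the ``continuation-utility'' process $Y^{\ell,n}$. The point is that the map $Y^{\ell,n}$ has been built precisely so that, for any admissible deviation $(\beta^1,\dots,\beta^n)$, if $\Xbb^{\beta}$ denotes the corresponding state process then the process $t \mapsto Y^{\ell,n}_t + \tfrac1n\sum_i\int_0^t L(s,X^{i,\beta}_s,\mu^{\beta,n},\aleph^{\ell,i},\beta^i)\,\d s$ is a supermartingale, and a martingale exactly when each $\beta^i$ coincides $\d t\otimes\P$-a.e. with the pointwise Hamiltonian maximizer $\widehat\alpha$; here one uses that $\widehat b\sigma^{-1}z+\widehat L = H$ is the value of the maximized Hamiltonian and that $\widehat\alpha$ is the \emph{unique} maximizer (\Cref{eq:maximizee}). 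Taking expectations and terminal values, $J^{\Cf^{\ell,n}}_n(\betab) = \E[\,g(\mu^{\beta,n},\xi^{\ell,n})\,] + (\text{running terms}) \le Y^{\gamma,\aleph}_0$ with equality at $\betab = \alphab^{\ell,n}$, which gives the equilibrium property and simultaneously $J^{\Cf^{\ell,n}}_n(\alphab^{\ell,n}) = Y^{\gamma,\aleph}_0 \ge R$; the conditional version $\E[\Rr^{\Cf^{\ell,n}}_n(\alphab^{\ell,n})\mid\Xbb_0]\ge R$ follows since $Y_0 = Y_0^{\gamma,\aleph}$ is a deterministic constant $\ge R$. Admissibility (the exponential-moment bound) is where \Cref{eq:opti_control_cond} and the truncation at level $\ell$ enter: with $\gamma^\ell,\aleph^\ell$ bounded, Girsanov gives equivalence with the driftless law of $\Ubb^n$, and the required moments are inherited from \Cref{eq:opti_control_cond} together with the exponential moments of $\nu$.

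For (b), the main work is the limit $n\to\infty$ for fixed $\ell$. Since $\alphab^{\ell,n}\in\mathrm{PE}[\Cf^{\ell,n}]$, we have $J^{\alphab^n}_{n,\Pr}(\Cf^{\ell,n}) \le V_{n,\Pr}$, so $\limsup_{n} J^{\alphab^n}_{n,\Pr}(\Cf^{\ell,n}) \le U(\widehat V)$ is immediate from \Cref{thm:cong_PA_comp}. For the matching lower bound I would pass to the limit in $n$ inside $J^{\alphab^{\ell,n}}_{n,\Pr}(\Cf^{\ell,n})$: the state system defining $\Xbb^{\ell,n}$ is a genuine mean-field interacting particle system driven by the Lipschitz (in $(x,\pi)$) drift $\widehat b$ with bounded feedbacks $\aleph^\ell,\gamma^\ell$, so standard propagation of chaos gives $\mu^{\ell,n}\to\mu^\ell$ and $X^{1}\to X^{\ell}$ in $\Wc_p$, where $(X^\ell,\mu^\ell)$ solves the McKean--Vlasov SDE with coefficients $(\widehat b,\sigma)$ and feedbacks $(\aleph^\ell,\gamma^\ell)$; correspondingly $Y^{\ell,n}_T \to Y^{\gamma^\ell,\aleph^\ell}_T$ (the stochastic-integral term $\tfrac1n\sum_i\sigma^{-1}\gamma\,\d X^i$ vanishes in $L^2$ by independence, leaving the law-of-large-numbers limit of the $H$-term, which by the identity $H=\widehat b\sigma^{-1}z+\widehat L$ reproduces $-\int_0^\cdot\widehat L\,\d t$ plus a martingale that disappears in expectation). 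Then continuity of $\Upsilon,\widehat g_\Pr, L_\Pr, U$ plus the uniform integrability coming from \Cref{assump:growth_coercivity1} (the growth/coercivity bounds, convexity of $U$ handled via its concavity for the upper side and linear growth for the lower side) yield $\lim_n J^{\alphab^{\ell,n}}_{n,\Pr}(\Cf^{\ell,n}) = U(\widehat J(\gamma^\ell,\aleph^\ell))$. Finally, letting $\ell\to\infty$: the truncated feedbacks converge $\d t$-a.e. to $(\gamma,\aleph)$, the bound \Cref{eq:opti_control_cond} provides the uniform integrability to pass to the limit in the $\ell$-indexed McKean--Vlasov systems, and by optimality of $(\gamma,\aleph)$ one gets $\widehat J(\gamma^\ell,\aleph^\ell)\to\widehat J(\gamma,\aleph)=\widehat V$, hence $\lim_\ell\lim_n J^{\alphab^n}_{n,\Pr}(\Cf^{\ell,n}) = U(\widehat V) = \lim_n V_{n,\Pr}$.

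I expect the main obstacle to be the lower-bound half of (b): controlling the convergence $J^{\alphab^{\ell,n}}_{n,\Pr}(\Cf^{\ell,n})\to U(\widehat J(\gamma^\ell,\aleph^\ell))$ uniformly enough to interchange the two limits. Concretely, the delicate points are (i) obtaining $\ell$-uniform $L^p$ (indeed exponential-moment) estimates on $\Xbb^{\ell,n}$, $Y^{\ell,n}$ so that the truncation error $\E|U(\cdots\text{truncated})-U(\cdots\text{untruncated})|$ is small uniformly in $n$ — this is exactly what \Cref{eq:opti_control_cond} is designed to supply, via Girsanov together with the exponential moments of $\nu$; and (ii) justifying the vanishing of the martingale/stochastic-integral remainders in $Y^{\ell,n}$ as $n\to\infty$, which rests on the conditional independence of the $W^i$ and a variance computation. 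Everything else is the now-standard ``verification via the BSDE representation of the agent's value'' argument of \cite{cvitanic2015dynamic} adapted to the mean-field state, combined with the propagation-of-chaos / stability machinery of \cite{djete2019general} already invoked for \Cref{thm:cong_PA_comp}.
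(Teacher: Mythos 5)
Your proposal is correct and follows essentially the same route as the paper, which factors the argument through the reformulation $V_{n,\Pr}=\widehat V_{n,\Pr}$ (\Cref{prop:reformulation_principal}), the BSDE verification of the equilibrium property (\Cref{prop:characterization_agent}), propagation of chaos for $n\to\infty$ at fixed $\ell$, and the stability-under-truncation step for $\ell\to\infty$ (\Cref{prop:approx_cong_PA}, \Cref{prop:first_cong_bounded}, \Cref{corollary:approx_cong_PA}). The only small imprecision is that $\widehat J(\gamma^\ell,\aleph^\ell)\to\widehat J(\gamma,\aleph)$ is a stability fact (Girsanov plus the exponential-moment condition \Cref{eq:opti_control_cond}), not a consequence of optimality, and since the statement is an iterated limit $\lim_\ell\lim_n$ no uniform-in-$\ell$ control or interchange of limits is actually required.
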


\begin{remark}
    $(i)$ {\rm \Cref{thm:fromLimit}} shows how it is possible to use another problem to solve the problem of Principal with multiple Agents in cooperation. The other problem here is a McKean--Vlasov control problem. Notice that the solution $(\gamma,\aleph)$ of $\widehat{V}$ needs to satisfy more conditions than just the admissibility requirement of $\Ac$. We can avoid these conditions but the construction of the sequence of contracts would become less straightforward for the reader.

    \medskip
    $(ii)$ It is worth mentioning that, despite allowing a general form for the contracts i.e. $\xi^n$ and $\alephbb^n$ are just Borel maps of the $n$--production $(X^{1,n},\cdots,X^{n,n})$, this result proves that it is approximately optimal to only consider contracts that are maps of the empirical distribution of $(X^{1,n},\cdots,X^{n,n})$ as in {\rm \cite{MFD_PA_comp}}. However, in contrast to {\rm \cite{MFD_PA_comp}}, the form is even more specific.

    \medskip
    $(iii)$ Another interesting observation of this result is the absence of having to solve a stochastic control problem involving the control of the volatility in order to solve the problem of the principal. Indeed, as shown in {\rm \cite{cvitanic2015dynamic}}, we need sometimes to solve a stochastic control problem with control of volatility to obtain the optimal contract. Also, obtaining the approximately optimal contract does not involve the map $U$. As we can see the limit problem does not take into account the map $U$.
\end{remark}

\subsection{Connection to the Principal--Agent problem with McKean--Vlasov dynamics } 
\label{section:PA_McKean}
%With consider the case of risk neutral agent i.e. $g(m,e)=e$.
As stated in the introduction, the previous problem of the principal with $n$ agents is connected to another Principal--Agent problem. Namely, the Principal--Agent problem with McKean--Vlasov dynamics. In this section, we present the mathematical formulation of the Principal--Agent problem with McKean--Vlasov dynamics.

\paragraph*{Admissible contracts} 
The set of contracts are $\Cf=\left(\overline{\xi},\aleph \right) \in \Xi$ where $\aleph:[0,T] \x \R \to \Er$ and $\overline{\xi}:\Pc_p(C([0,T];\R)) \to \R$ are Borel maps and, there exist Borel maps $\xi:C([0,T];\R) \x \Pc_p(C([0,T];\R)) \to \R$ and $\psi:C([0,T];\R) \to \R_+$ s.t. $g \left(m, \overline{\xi}(m) \right)=\int_{C([0,T];\R)} \xi(z,m),m(\mathrm{d}z)$, for each $x$, $m \mapsto \xi(x,m)$ is continuous and
\begin{align*}
    \sup_{x,m}\frac{\left|\xi(x,m) \right|}{1 + \psi(x) + \|m\|_p^p} < \infty,
\end{align*}
where the supremum is taken over $x \in C([0,T];\R)$ and $m \in \Pc_p(C([0,T];\R))$ with $\int_{C([0,T];\R)} |\xi(z,m)|m(\mathrm{d}z)< \infty$.
Furthermore,  $\aleph$ and the map $\psi$ satisfy $\E\left[\int_0^T|\aleph(t,U_t)|^{2}\mathrm{d}t + \psi(U)^2 \right] < \infty$ where $U_0=\iota$, $\mathrm{d}U_t=\sigma(t,U_t)\mathrm{d}W_t$. We will usually refer to $\xi$ instead of $\overline{\xi}$.
%for each $x$, $m \mapsto \xi(x,m)$ is continuous for $\Wc_p$ and, 

\paragraph*{Admissible controls} Given a contract $\Cf$ associated to $\psi$, we say $\alpha \in \Ac(\Cf)$ is an admissible control associated to $\Cf$ if: $\alpha:[0,T] \x C([0,T];\R) \to A$ is progressively Borel measurable, the process $X^\alpha$ is well defined with $\mub^\alpha=\Lc(X^\alpha)$, $\mu=(\mu^\alpha_t=\Lc(X^\alpha_t))_{t \in [0,T]}$, $X^\alpha_0=\iota$,
\begin{align*}
    \mathrm{d}X^{\alpha}_t
    =
    b\left(t,X^\alpha_t,\mu^\alpha,\aleph(t,X^\alpha_t), \alpha(t,X^\alpha) \right) \mathrm{d}t + \sigma(t,X^{\alpha}_t) \mathrm{d}W_t.%\;\mbox{and}\;\mu^\alpha_t=\Lc(X^{\alpha}_t)
\end{align*}
In addition, $\E \left[ \psi(X^\alpha) + \int_0^T \left| \alpha(t,X^\alpha) \right|^2 \mathrm{d}t \right] < \infty$ and   $\E\left[|L^{\alpha}_T|^2 \right]< \infty$ where $L^{\alpha}_0=1$, $U_0=\iota,$ 
\begin{align*}
    \mathrm{d}L^{\alpha}_t
    =
    L^{\alpha}_t \sigma(t,U_t)^{-1}b\left(t, U_t, \mu^\alpha,\aleph(t,U_t), \alpha(t,U) \right) \mathrm{d}W_t\;\mbox{and}\;\mathrm{d}U_t=\sigma(t,U_t)\mathrm{d}W_t.
\end{align*}

\medskip
We will say $\Cf=(\xi,\aleph) \in \widehat{\Xi} \subset \Xi$ if there exists a Borel map $\gamma:[0,T] \x \R \to \Er$ and $Y_0 \in \R$ s.t.
\begin{align*}
    \E \left[\exp \left(\int_0^T a |\aleph(t,U_t)|^2 + b |\gamma(t,U_t)|^2 \mathrm{d}t \right) \right]< \infty,\;\;\mbox{for each }a,b \ge 0,
\end{align*}
%for some $q, q' >1$ verifying $1/q + 1/q'=1$, $\E \left[ \int_0^T \left|\gamma(t,U_t)  \right|^{2 q'} + \left| \aleph(t,U_t) \right|^{2 q'} \mathrm{d}t \right] < \infty$, and for each $\mu \in \Pc_p(C([0,T];\R)),$ $\E \left[ |L^{\mu}_T|^q \right] < \infty$ where $L^\mu_0=1,\;\mathrm{d}L^\mu_t=L^\mu_t \sigma(t,U_t)^{-1} \widehat{b}\left(t, U_t, \mu_t,\aleph(t,U_t), \gamma(t,U_t) \right) \mathrm{d}W_t,\;\mathrm{d}U_t=\sigma(t,U_t)\mathrm{d}W_t$, $\Lc^{\P^{\mu}}(U_t)=\mu_t$, and $\mathrm{d}\P^\mu:=L^\mu_T \mathrm{d}\P$. 
with in addition, for any $\alpha \in \Ac(\Cf)$,
\begin{align*}
    g \left(\mu^\alpha,\overline{\xi}(\mub^\alpha) \right)
    =
    Y_0
    -
    \E \left[\int_0^T  H (t,X^\alpha_t, \mu^\alpha,\aleph(t,X^\alpha_t), \gamma(t,X^\alpha_t)) \; \mathrm{d}t \right] + \E\left[ \int_0^T \gamma(t,X^\alpha_t) \sigma(t,X^\alpha_t)^{-1} \mathrm{d}X^\alpha_t \right].
\end{align*}

\begin{remark}
    Even though we consider the requirements for contracts and controls to be context--appropriate, due to the measure dependence they may not seem natural at first glance. In the absence of measure dependence i.e. classical Principal--Agent setting, these conditions are enough to establish most equivalence results of the literature under the assumptions we will consider.
\end{remark}

%there exists $\xi: C([0,T];\R) \x \Pc(C([0,T];\R)) \to \Er$ a Borel map s.t. for each $m \in \Pc_p$, $\int_{C([0,T];\R)} |\xi(x,m)|\mu(\mathrm{d}x)< \infty$, $\overline{\xi}(m)=\int_{C([0,T];\R)} \xi(x,m)\mu(\mathrm{d}x)$ and for each $x$, $m \mapsto \xi(x,m)$ is continuous for $\Wc_p$. 

\medskip
For a contract $\Cf$, the agent uses $\alpha \in \Ac(\Cf)$ to control the production $X^\alpha$ in order to solve $V_A^{\Cf}:=\sup_{\alpha \in \Ac(\Cf)} J_A^{\Cf}(\alpha)$ where 
\begin{align*}
    J_A^{\Cf}(\alpha)
    :=
    \E \left[ 
    \int_0^T L\left(t,X^\alpha_t,\mu^\alpha,\aleph(t,X^\alpha_t),\alpha(t,X^\alpha) \right)\;\mathrm{d}t + g \left(\mu^\alpha,\overline{\xi}(\mub^\alpha) \right)
    \right].
\end{align*}
We denote by $\Ac^\star(\Cf)$ the set of admissible controls solving $V^{\Cf}_A$ and with the reservation utility $R$, we write ${\rm PE}[\Cf]$ the set of admissible controls solving $V^{\Cf}_A$ with $V^{\Cf}_A \ge R$.

\medskip
The problem of the Principal is 
\begin{align} \label{eq:value_PA_McKean}
    \overline{V}_{\rm P} := \sup_{\Cf \in \Xi} \sup_{\alpha \in {\rm PE}[\Cf] } J_{\Pr}^{\alpha}(\Cf),\;\; J_{\Pr}^{\alpha}(\Cf):=U \left(\E \left[ \Upsilon(X^\alpha_T) - g_{{\rm P}}\left(\mu^\alpha, \overline{\xi}(\mub^\alpha)  \right) - \int_0^T L_{\Pr} \left(t, \aleph(t,X^\alpha_t)\right)\;\mathrm{d}t  \right] \right).
\end{align}
By restricting the set of contracts on $\widehat{\Xi}$, we also introduce ${V}_{\Pr}:=\sup_{\Cf \in \widehat{\Xi}} \sup_{\alpha \in {\rm PE}[\Cf] } J_{\Pr}^{\alpha}(\Cf)$.

%\begin{assumption} \label{assump:growth_coercivity2}
 %   With the positive constants $\overline{c}, \widehat{c}$, for all $(t,x,m,e,z),$ %$\lambda=m(\R, \mathrm{d}e',\mathrm{d}z')$, $\pi=m(\mathrm{d}x', \Er \x \R)$,
%\begin{align*}
%     -\widehat{c} \left( 1 + |x|^{p} +  \|m\|^p_p  \right) + \overline{c} \left(|e|^{2} + |z|^{2} \right) \le H \left( t, x,m, e,  z \right) \le  \widehat{c}\left(  1 + |x|^{2} + \| m \|_{2}^{2} + |e|^{2} + |z|^{2} \right).
%\end{align*}
%\end{assumption}

\begin{theorem} \label{thm:PA_McK_reduced}
    Under {\rm \Cref{assum:main1}},  {\rm \Cref{assump:growth_coercivity}} and {\rm \Cref{assump:growth_coercivity1}}, we have that 
    $$
        {V}_{\Pr}=U(\widehat{V} )=\lim_{n \to \infty} V_{n,\Pr}. 
    $$
\end{theorem}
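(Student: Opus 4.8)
The plan is to show the chain of equalities $V_{\Pr} = U(\widehat V) = \lim_n V_{n,\Pr}$. The second equality is exactly Theorem \ref{thm:cong_PA_comp}, which is already established, so the real work is proving $V_{\Pr} = U(\widehat V)$. I would split this into two inequalities.

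For the inequality $V_{\Pr} \le U(\widehat V)$: fix a contract $\Cf = (\xi,\aleph) \in \widehat\Xi$ with its associated $\gamma$ and $Y_0$, and an admissible control $\alpha \in \mathrm{PE}[\Cf]$. The defining identity of $\widehat\Xi$ rewrites $g(\mu^\alpha,\overline\xi(\mub^\alpha))$ as $Y_0 - \E[\int_0^T H(t,X^\alpha_t,\mu^\alpha,\aleph,\gamma)\,\d t] + \E[\int_0^T \gamma\,\sigma^{-1}\d X^\alpha]$. The key algebraic point — identical in spirit to the Sannikov/Cvitanić reduction in \cite{cvitanic2015dynamic} — is that for \emph{any} admissible $\alpha$, using $\d X^\alpha_t = b(t,X^\alpha_t,\mu^\alpha,\aleph,\alpha)\d t + \sigma\,\d W_t$ one gets $g(\mu^\alpha,\overline\xi(\mub^\alpha)) = Y_0 - \E[\int_0^T h(t,X^\alpha_t,\mu^\alpha,\aleph,\gamma,\alpha(t,X^\alpha)) - L(t,X^\alpha_t,\mu^\alpha,\aleph,\alpha)\,\d t]$ minus the analogous $H$-term, and since $h \le H$ with equality exactly at $\alpha = \widehat\alpha(\cdot,\sigma^{-1}\gamma)$, the agent's value $J_A^{\Cf}(\alpha) = Y_0 - \E[\int_0^T (H - \widehat L)(t,X^\alpha_t,\mu^\alpha,\aleph,\gamma) - (h-L)(\cdots) \,\d t]$ is maximized precisely by $\widehat\alpha$. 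Hence the optimal agent response is $\gamma$-driven, $V_A^{\Cf} = Y_0 \ge R$, the optimal state process coincides with $X^{\gamma,\aleph}$ from the limit problem, $Y_T^{\gamma,\aleph} = Y_0 - \E[\int_0^T \widehat L\,\d t]$ satisfies $g(\mu,\overline\xi) = $ (the terminal payment consistent with $\xi$), and plugging into $J_{\Pr}^\alpha(\Cf)$ gives exactly $U(\widehat J(\gamma,\aleph))$ with $(\gamma,\aleph)\in\Ac$. Taking suprema yields $V_{\Pr} \le U(\widehat V)$. One must be careful to check that $(\gamma,\aleph)$ genuinely lies in $\Ac$, i.e. the square-integrability of $\gamma(t,X_t)$ and $\aleph(t,X_t)$, which follows from the growth bound on $\widehat\alpha$ in \Cref{assump:growth_coercivity}, the coercivity of $\widehat L, L_{\Pr}, \widehat g_{\Pr}$ in \Cref{assump:growth_coercivity1}, and standard moment estimates on the McKean--Vlasov SDE.

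For the reverse inequality $U(\widehat V) \le V_{\Pr}$: given $(\gamma,\aleph)\in\Ac$ optimal (or near-optimal) for $\widehat V$, one reverse-engineers a contract. Define $X^{\gamma,\aleph}$ and $Y^{\gamma,\aleph}$ as in \eqref{eq:limit_value}, choose $Y_0 = Y_0^{\gamma,\aleph} \ge R$, and set the instantaneous payment to $\aleph$ and the terminal payment via $g(\mu,\overline\xi(\mub)) := Y_0 - \E[\int_0^T H(t,X^\alpha_t,\mu^\alpha,\aleph,\gamma)\,\d t] + \E[\int_0^T \gamma\,\sigma^{-1}\d X^\alpha]$, i.e. declare $\Cf\in\widehat\Xi$ by fiat with this $\gamma$, then invert $g$ to recover $\overline\xi$ (using invertibility of $e\mapsto g(\pi,e)$ from \Cref{assump:growth_coercivity}). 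The subtle point is to verify this defines an \emph{admissible} contract in the sense of $\Xi$: one needs the representation $g(m,\overline\xi(m)) = \int \xi(z,m)\,m(\d z)$ with the stated growth of $\xi$ and a dominating function $\psi$ — here the required integrability is precisely what the extra exponential-integrability hypothesis on $(\gamma,\aleph)$ (condition analogous to \eqref{eq:opti_control_cond}) is designed to give, combined with \Cref{assump:growth_coercivity1}. By the computation of the first part, the agent facing $\Cf$ optimally plays $\widehat\alpha(\cdot,\sigma^{-1}\gamma)$, the resulting state is $X^{\gamma,\aleph}$, $V_A^{\Cf}=Y_0\ge R$, so $\alpha\in\mathrm{PE}[\Cf]$, and $J_{\Pr}^\alpha(\Cf) = U(\widehat J(\gamma,\aleph))$; a supremum over $(\gamma,\aleph)$ and monotonicity/continuity of $U$ finish it.

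The main obstacle I anticipate is not the algebraic reduction (which is the now-standard contract-to-BSDE trick) but the measure-dependence bookkeeping: showing that the reverse-engineered object $\overline\xi$ genuinely decomposes as $\int\xi(z,m)m(\d z)$ with an admissible growth/domination structure, and symmetrically that an arbitrary $\Cf\in\widehat\Xi$ yields $(\gamma,\aleph)\in\Ac$ with the right moments. Establishing the needed uniform-in-$m$ bounds on $\xi(x,m)$ and the existence of the control $\psi$ requires combining the a priori bound on $\widehat\alpha$, the two-sided coercivity estimates of \Cref{assump:growth_coercivity1}, Girsanov-based change of measure (since $X^\alpha$ is only a weak solution), and Wasserstein-continuity of the flow $t\mapsto\mu^\alpha_t$; this is where most of the technical care goes, and where the restriction to $\widehat\Xi$ (rather than all of $\Xi$) is essential for the clean identity $V_{\Pr}=U(\widehat V)$.
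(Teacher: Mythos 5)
Your proposal is correct and follows essentially the same route as the paper: the second equality is Theorem \ref{thm:cong_PA_comp}, the inequality $V_{\Pr}\le U(\widehat V)$ is the paper's Proposition \ref{prop:characterization_control} (substituting the dynamics into the defining identity of $\widehat\Xi$ to get $J_A^{\Cf}(\alpha)=\E[Y_0]+\E[\int_0^T (h-H)\,\mathrm{d}t]\le \E[Y_0]$ with equality at $\widehat\alpha$), and the reverse inequality is the same reverse-engineering of a contract in $\widehat\Xi$ from a sufficiently integrable (the paper uses bounded, plus a density argument) near-optimizer of $\widehat V$. The only blemish is the displayed intermediate formula $J_A^{\Cf}(\alpha)=Y_0-\E[\int_0^T (H-\widehat L)-(h-L)\,\mathrm{d}t]$, which misses the term $\widehat L-L$ and should read $Y_0-\E[\int_0^T (H-h)\,\mathrm{d}t]$; the conclusion you draw from it is nonetheless the correct one.
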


\begin{remark}
    {\rm \Cref{thm:PA_McK_reduced}} gives another perspective on the limit of the problem of Principal with multiple Agents in cooperation when the number of agents goes to infinity. The limit is actually equal to a Principal--Agent problem of a new kind.
\end{remark}

Similarly to the problem of Principal with multiple Agents, we provide a constructive way to obtain an optimal contract for the problem $V_{\Pr}$ from the McKean--Vlasov control problem $\widehat{V}$. Let $(\gamma,\aleph) \in \Ac$ be an optimal control for $\widehat{V}$ satisfying the condition of \Cref{eq:opti_control_cond}. We define $\overline{\xi}(\mub):=g^{-1}\left(\mu, \int \xi(x,\mub) \mub(\mathrm{d}x) \right)$ where
\begin{align*}
     \xi(X,\mub)
    :=
    Y_0^{\gamma,\aleph} -
    \int_0^T  H (t,X_t, \mu,\aleph(t,X_t), \gamma(t,X_t)) \; \mathrm{d}t  + \int_0^T \gamma(t,X_t) \sigma(t, X_t)^{-1} \mathrm{d}X_t,
\end{align*}
for any $X$ s.t. $\Lc(X)=\mub$, $\mu=(\Lc(X_t))_{t \in [0,T]}$, and $X$ is a semi--martingale verifying $\int_0^T \gamma(t,X_t)^2 \sigma(t,X_t)^{-2} \mathrm{d}\langle X \rangle_t < \infty$ a.e.
%for any $(x,\mub) \in \Pc_p(C([0,T];\R) \x C([0,T];\R))$ with $\mu=(\mu_t=)$  s.t. the previous quantity is well--defined and $\xi(x,\mu)=0$ otherwise, and the map $\psi^\xi$ satisfies: for any semi--martingale $(S_t)_{t \in [0,T]}$ s.t. $\int_0^T \gamma(t,S_t)^2 \sigma(t,S_t)^{-2} \mathrm{d}\langle S \rangle_t < \infty$ a.e., $\psi^{\xi}(S)=\int_0^T \gamma(t,S_t) \sigma(t,S_t)^{-1} \mathrm{d}S_t$.
%\begin{align*}
    %\E[ \xi(\Xh,\mu) ]
    %:=
    %Y_0^{\gamma,\aleph} -
    %\E\left[\int_0^T  H (t,\Xh_t, \mu_t,\aleph(t,\Xh_t), \gamma(t,\Xh_t)) \; \mathrm{d}t \right]  + \E\left[\int_0^T \gamma(t,\Xh_t) \sigma(t,\Xh_t)^{-1} \mathrm{d}\Xh_t \right]
%\end{align*}
%for any $\mu=\Lc(\Xh) \in \Pc_p(C([0,T];\R))$ s.t. the canonical process $\Xh$ has a semi--martingale representation satisfying $\E\left[\left|\int_0^T \gamma(t,\Xh_t) \sigma(t,\Xh_t)^{-1} \mathrm{d}\Xh_t \right| \right] < \infty$, and $\E[ \xi(\Xh,\mu) ]=0$ otherwise.
\begin{proposition} \label{prop:PA_McKean_construction}
    Let {\rm \Cref{assum:main1}},  {\rm \Cref{assump:growth_coercivity}} and {\rm \Cref{assump:growth_coercivity1}} be true. The contract $\Cf=(\xi,\aleph)$ belongs to $\widehat{\Xi}$, it is an optimal contract for the problem of the Principal $V_{\Pr}$ i.e.
    \begin{align*}
        {V}_{\Pr}=U(\widehat{V} ) = \sup_{\alpha \in {\rm PE}[\Cf] } J_{\Pr}^{\alpha}(\Cf).
    \end{align*}
    The optimal control of the problem of the agent is given by $(t,x) \mapsto \widehat{\alpha}(t,x_t,\mu,\aleph(t,x_t),\gamma(t,x_t))$ where $\mu= (\mu_t=\Lc(X_t))_{t \in [0,T]}$ with $X_0=\iota$ and
    \begin{align*}
        \mathrm{d}X_t =  \widehat{b}\left(t,X_{t},\mu,\aleph(t,X_t),\gamma(t,X_t)  \right) \;\mathrm{d}t 
        +
        \sigma(t,X_t) \mathrm{d}W_t.
    \end{align*}
\end{proposition}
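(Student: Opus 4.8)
The plan is to verify that the explicitly constructed contract $\Cf=(\xi,\aleph)$ lies in $\widehat{\Xi}$, that the announced control is optimal for the agent and delivers exactly $Y_0^{\gamma,\aleph}\ge R$ as reservation value, and finally that the principal's reward under this contract equals $U(\widehat V)$, which by \Cref{thm:PA_McK_reduced} is the upper bound $V_{\Pr}$. First I would check admissibility of the contract: the map $\aleph$ inherits the exponential integrability from \Cref{eq:opti_control_cond}, and one must exhibit the decomposition $g(m,\overline\xi(m))=\int \xi(z,m)\,m(\mathrm dz)$ with a $\psi$ controlling $\xi$; here $\psi(X)$ can be taken as a polynomial in $\sup_{t}|X_t|$ plus the stochastic-integral term, and the growth bound $|\xi(x,m)|\le C(1+\psi(x)+\|m\|_p^p)$ follows from the linear growth of $\widehat\alpha$ (\Cref{assump:growth_coercivity}) together with the growth bounds on $\widehat L$ and $g^{-1}$ in \Cref{assump:growth_coercivity1}. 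The required $\E[\psi(U)^2+\int_0^T|\aleph(t,U_t)|^2\mathrm dt]<\infty$ is then a consequence of the exponential moment assumption and standard SDE moment estimates.

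Next I would establish the agent's side. For any admissible $\alpha\in\Ac(\Cf)$, apply Itô's formula / the defining identity for $\xi$ along the path $X^\alpha$: using $\mathrm dX^\alpha_t=b(t,X^\alpha_t,\mu^\alpha,\aleph(t,X^\alpha_t),\alpha(t,X^\alpha))\mathrm dt+\sigma\,\mathrm dW_t$ one gets
\begin{align*}
    g\big(\mu^\alpha,\overline\xi(\mub^\alpha)\big)
    = Y_0^{\gamma,\aleph}
    -\E\!\left[\int_0^T H(t,X^\alpha_t,\mu^\alpha,\aleph,\gamma)\,\mathrm dt\right]
    +\E\!\left[\int_0^T \gamma\,\sigma^{-1}\,\mathrm dX^\alpha_t\right],
\end{align*}
and since the stochastic integral has zero expectation (guaranteed by the integrability built into $\Ac(\Cf)$ and $\widehat\Xi$), the principal's replacement of $H$ by $h(t,\cdot,\alpha)+\big(H-h(t,\cdot,\alpha)\big)$ shows $J_A^{\Cf}(\alpha)=Y_0^{\gamma,\aleph}+\E\big[\int_0^T\big(h(t,X^\alpha_t,\mu^\alpha,\aleph,\sigma^{-1}\gamma,\alpha(t,X^\alpha))-H(t,X^\alpha_t,\mu^\alpha,\aleph,\gamma)\big)\mathrm dt\big]$; by definition of $H$ as the value of the Hamiltonian at its unique maximizer $\widehat\alpha$, the integrand is $\le 0$, with equality iff $\alpha(t,\cdot)=\widehat\alpha(t,X_t,\mu,\aleph(t,X_t),\sigma^{-1}\gamma(t,X_t))$. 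Thus $V_A^{\Cf}=Y_0^{\gamma,\aleph}\ge R$, the stated control is the (essentially unique) maximizer, and under it $\mu^\alpha=\mu$ with the $\widehat b$-dynamics exactly as in the statement; one also checks this control is genuinely in $\Ac(\Cf)$ using \Cref{eq:opti_control_cond} and the linear growth of $\widehat\alpha$. This puts $\Cf$ in $\widehat\Xi$ with the witnessing $\gamma,Y_0$, and the control lies in ${\rm PE}[\Cf]$.

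Finally, evaluating the principal's criterion \Cref{eq:value_PA_McKean} along this optimal response: $g_{\Pr}(\mu^\alpha,\overline\xi(\mub^\alpha))=\widehat g_{\Pr}(\mu,Y_T^{\gamma,\aleph})$ by the very definitions of $\widehat g_{\Pr}$ and of $Y^{\gamma,\aleph}$ (note $\int\xi(z,\mub)\mub(\mathrm dz)=Y_T^{\gamma,\aleph}$ in expectation since the stochastic-integral term integrates to zero), so
\begin{align*}
    J_{\Pr}^{\alpha}(\Cf)
    = U\!\left(\E\!\left[\Upsilon(X_T)-\widehat g_{\Pr}(\mu,Y^{\gamma,\aleph}_T)-\int_0^T L_{\Pr}(t,\aleph(t,X_t))\,\mathrm dt\right]\right)
    = U\big(\widehat J(\gamma,\aleph)\big)=U(\widehat V),
\end{align*}
the last equality because $(\gamma,\aleph)$ is optimal for $\widehat V$. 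Combined with ${V}_{\Pr}=U(\widehat V)$ from \Cref{thm:PA_McK_reduced} and monotonicity of $U$, this gives optimality. The main obstacle I anticipate is the bookkeeping around the measure-dependent integrability: making sure the decoupling identity for $\xi$ is well posed for \emph{every} admissible $\alpha$ (not just the optimal one), that the stochastic integral $\int_0^T\gamma\sigma^{-1}\mathrm dX^\alpha_t$ is a true martingale so its expectation vanishes, and that the constructed $\psi$ simultaneously controls $\xi$ and has the finite second moment required — i.e. threading the growth exponents ($p$ versus $2$) through \Cref{assump:growth_coercivity1} consistently. The Hamiltonian-maximization argument itself is routine once these integrability points are secured.
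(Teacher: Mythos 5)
Your proposal is correct and follows essentially the same route as the paper: the paper's own proof simply observes that the argument is the one already carried out for \Cref{thm:PA_McK_reduced} (admissibility of $\Cf$ via the growth bound on $\xi(x,\mub)$ in terms of $\|\mub\|_p^p$, $\int_0^T|x_t|^p+|\aleph|^2+|\gamma|^2\,\mathrm{d}t$ and the stochastic integral, then membership in $\widehat\Xi$, then \Cref{prop:characterization_control} for the agent's best response and the evaluation $J_{\Pr}^{\alpha}(\Cf)=U(\widehat J(\gamma,\aleph))=U(\widehat V)$). The integrability points you flag are exactly the ones the paper addresses (e.g.\ the Cauchy--Schwarz bound on $\E[\int_0^T\gamma^2\sigma^{-2}\,\mathrm{d}\langle X^\alpha\rangle_t]$ in \Cref{prop:characterization_control}), so there is no gap.
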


\medskip
So far, we have considered the Principal--Agent problem with McKean--Vlasov dynamics when the set of contracts is in $\widehat{\Xi}$. This restriction is actually not necessary as we will now see.
Let us denote by $\Mc$ the set of progressively Borel measurable maps $\Gamma:[0,T] \x C([0,T];\R) \to \Pc(\R)$ and by $\Mc_0$  the set of progressively Borel measurable maps $\gamma:[0,T] \x C([0,T];\R) \to \R$.

\begin{proposition} \label{prop:PA_Mckean_charac}
    Let {\rm \Cref{assum:main1}},  {\rm \Cref{assump:growth_coercivity}} and {\rm \Cref{assump:growth_coercivity1}} be true, $b$ be bounded and $\Cf=(\xi,\aleph) \in \Xi$ be  a contract.

\medskip
    $(i)$ For any $\alpha \in \Ac(\Cf)$, there exists $\Gamma \in \Mc$ s.t.
    \begin{align*}
        J^{\Cf}_A(\alpha)
        \le 
        &\sup_{\alpha' \in \Ac(\Cf)} J^{\Cf}_A(\alpha')
        \\
        &~~+ \E \left[ \int_0^T \int_{\R} h\left(t,X^{\alpha}_t,\mu^\alpha,\aleph(t,X^\alpha_t),z,\alpha(t,X^\alpha) \right)-  H\left(t,X^{\alpha}_t,\mu^\alpha,\aleph(t,X^\alpha_t),z \right) \Gamma(t,X^\alpha)(\mathrm{d}z)\mathrm{d}t \right].
    \end{align*}
    %and
    %\begin{align*}
        %\E \left[ \xi(X^\alpha, \mu)\right]= \E \bigg[ Y^\alpha_0 &-\int_0^T \int_\R H\left(t, X^\alpha_t, \mu_t, \aleph(t,X^\alpha_t), z \right) 
    %\\
    %&+ \sigma(t,X^\alpha_t)^{-1} z b \left(t, X^\alpha_t, \mu_t, \aleph(t,X^\alpha_t), \alpha(t,X^\alpha_t) \right)\Gamma(t,X^\alpha)(\mathrm{d}z)\mathrm{d}t \bigg]
    %\end{align*}
%for some random variable $Y^\alpha_0$ verifying $\E[Y^\alpha_0]=\sup_{\alpha' \in \Ac(\Cf)} J_{\Cf}(\alpha')$.

\medskip
$(ii)$ If $\alpha \in \Ac(\Cf)$ is optimal i.e. $\alpha \in \Ac^\star(\Cf)$, there exists $\gamma \in \Mc_0$ s.t. $\mathrm{d}t \otimes \mathrm{d}\P$--a.e. $$
    \alpha(t,X^\alpha)=\widehat{\alpha}(t,X^\alpha_t,\mu^\alpha, \aleph(t,X^\alpha_t), \sigma(t,X^\alpha_t)^{-1} \gamma(t,X^\alpha) )
$$
and 
\begin{align*}
    g \left(\mu^\alpha,\overline{\xi}(\mub^\alpha) \right)
    =
    \sup_{\alpha' \in \Ac(\Cf)} J^{\Cf}_A(\alpha')
    -
    \E \left[\int_0^T  \widehat{L} (t,X^\alpha_t, \mu^\alpha,\aleph(t,X^\alpha_t), \gamma(t,X^\alpha)) \; \mathrm{d}t \right]. 
\end{align*}

\end{proposition}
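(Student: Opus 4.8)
The plan is to reduce the agent's optimization problem, for a fixed contract $\Cf=(\xi,\aleph)\in\Xi$, to a classical stochastic control problem in weak formulation and then apply a dynamic-programming/BSDE argument in the spirit of \cite{cvitanic2015dynamic}, but carried out along an approximating sequence because of the McKean--Vlasov dependence. First I would freeze the measure arguments. For any fixed $\alpha\in\Ac(\Cf)$, the flow $\mu^\alpha=(\mu^\alpha_t)_{t\in[0,T]}$ and the path-distribution $\mub^\alpha$ are fixed deterministic objects, so $t\mapsto b(t,\cdot,\mu^\alpha,\aleph(t,\cdot),\cdot)$ and $L(t,\cdot,\mu^\alpha,\aleph(t,\cdot),\cdot)$ become genuine (non-mean-field) coefficients. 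Using Girsanov's theorem (legitimate because $\sigma$ is non-degenerate and bounded, $b$ is bounded, and $L^\alpha_T\in L^2$), I would pass to a weak formulation on the canonical path space where $U$ (with $\mathrm{d}U_t=\sigma(t,U_t)\mathrm{d}W_t$) is the driving process and the drift is encoded by the density $L^\alpha$. In this weak formulation the agent's value $V^\Cf_A$ and the reward $J^\Cf_A(\alpha)$ are expressed through a family of control problems whose Hamiltonian is exactly $a\mapsto h(t,x,\mu^\alpha,\aleph(t,x),z,a)$, with $H=\sup_a h$ attained at $\widehat\alpha$ by Assumption~\ref{assump:growth_coercivity}.

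For part $(i)$, I would introduce, for the given $\alpha$, the BSDE (or equivalently the continuation-utility process) associated with the \emph{optimal} agent response: there is a pair $(Y,Z)$ with terminal condition $Y_T=g(\mu^\alpha,\overline\xi(\mub^\alpha))$ and generator $-H(t,X^{\alpha}_t,\mu^\alpha,\aleph(t,X^\alpha_t),Z_t)$, whose initial value equals $\sup_{\alpha'}J^\Cf_A(\alpha')$; the well-posedness/integrability comes from Assumption~\ref{assump:growth_coercivity1} (the quadratic-in-$z$ bounds on $\widehat L$) together with the exponential-moment admissibility of contracts. Comparing this to the (linear) BSDE whose generator is $-h(t,X^\alpha_t,\mu^\alpha,\aleph(t,X^\alpha_t),Z_t,\alpha(t,X^\alpha))$, whose initial value is $J^\Cf_A(\alpha)$, the difference of the two initial values is, by the standard linearization/comparison computation, the expectation of $\int_0^T\big(H-h\big)(t,X^\alpha_t,\mu^\alpha,\aleph(t,X^\alpha_t),Z_t,\alpha(t,X^\alpha))\,\mathrm{d}t$ against a change of measure. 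Taking $\Gamma(t,X^\alpha)(\mathrm{d}z):=\delta_{Z_t}(\mathrm{d}z)$ under that change of measure — more precisely, $\Gamma\in\Mc$ chosen as the (progressively measurable, possibly randomized) conditional law of $Z_t$ that realizes this expectation — yields exactly the claimed inequality, after noting $h-H\le 0$ so one may absorb the change of measure into the choice of $\Gamma$. I would keep $b$ bounded precisely so that the Girsanov density and all the Radon--Nikodym weights stay in $L^2$, making every expectation above finite.

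For part $(ii)$, starting from $\alpha\in\Ac^\star(\Cf)$, the inequality in $(i)$ is an equality, so $\int_0^T(h-H)(t,X^\alpha_t,\mu^\alpha,\aleph(t,X^\alpha_t),Z_t,\alpha(t,X^\alpha))\,\mathrm{d}t=0$ almost surely against the relevant measure; since the integrand is nonpositive it vanishes $\mathrm{d}t\otimes\mathrm{d}\P$-a.e., and by the \emph{uniqueness} of the maximizer (Assumption~\ref{assump:growth_coercivity}) this forces $\alpha(t,X^\alpha)=\widehat\alpha(t,X^\alpha_t,\mu^\alpha,\aleph(t,X^\alpha_t),\sigma(t,X^\alpha_t)^{-1}\gamma(t,X^\alpha))$ with $\gamma(t,X^\alpha):=\sigma(t,X^\alpha_t)Z_t$ (and $\gamma\in\Mc_0$ is read off as the progressively measurable version of $\sigma Z$). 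Writing the BSDE for $Y$ forward then gives $Y_0-\int_0^T\widehat L(t,X^\alpha_t,\mu^\alpha,\aleph(t,X^\alpha_t),\gamma(t,X^\alpha))\,\mathrm{d}t+\int_0^T\gamma\sigma^{-1}\mathrm{d}X^\alpha_t=Y_T=g(\mu^\alpha,\overline\xi(\mub^\alpha))$, and taking expectations (the stochastic integral is a true martingale by the integrability of $\gamma$ and $X^\alpha$) together with $Y_0=\sup_{\alpha'}J^\Cf_A(\alpha')$ gives the displayed identity.

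The main obstacle I anticipate is the integrability bookkeeping around the BSDE with generator $-H$: $H$ is only quadratic-in-$z$ with a coercivity lower bound (Assumption~\ref{assump:growth_coercivity1}), and the terminal datum $g(\mu^\alpha,\overline\xi(\mub^\alpha))$ carries only the polynomial/exponential moments granted by the admissibility of $\Cf$, so establishing existence, uniqueness and the $L^2$-martingale property of the stochastic integral $\int\gamma\sigma^{-1}\mathrm{d}X^\alpha$ requires a careful combination of the exponential-moment hypothesis on $\aleph$, the boundedness of $b$ and $\sigma$, and a priori estimates for quadratic BSDEs (à la Kobylanski or via the exponential transform). A secondary subtlety is that the measure $\mu^\alpha$ depends on $\alpha$, so the coefficients of the frozen control problem change with $\alpha$; this is harmless for parts $(i)$--$(ii)$ since $\alpha$ is fixed throughout, but it does mean one cannot hope for a single control problem to capture $V^\Cf_A$ — which is exactly why the statement is phrased pointwise in $\alpha$ rather than as a clean reduction.
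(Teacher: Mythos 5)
Your plan founders on its central step: the identification of the initial value of the frozen--measure BSDE with $\sup_{\alpha'\in\Ac(\Cf)}J^{\Cf}_A(\alpha')$. When you freeze $\mu^\alpha$ and $\mub^\alpha$ and solve the BSDE with generator $-H(t,X^\alpha_t,\mu^\alpha,\aleph(t,X^\alpha_t),Z_t)$ and terminal condition $g(\mu^\alpha,\overline{\xi}(\mub^\alpha))$, the dynamic--programming value you obtain is that of the auxiliary control problem in which the agent varies the drift \emph{while the measure flow and the terminal payoff stay pinned at} $\mu^\alpha,\mub^\alpha$. But in $\Ac(\Cf)$ a deviation $\alpha\to\alpha'$ moves the law to $\mu^{\alpha'}$ and changes the terminal payoff to $g(\mu^{\alpha'},\overline{\xi}(\mub^{\alpha'}))$ as well as the coefficients $b,L$; the frozen value and the true supremum are different objects and neither inequality between them is automatic. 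Your closing remark that the $\alpha$--dependence of $\mu^\alpha$ is ``harmless since $\alpha$ is fixed throughout'' is in tension with your own use of the frozen BSDE to represent $\sup_{\alpha'}J^{\Cf}_A(\alpha')$, which quantifies over all $\alpha'$. This is exactly the obstruction the paper flags: the Cvitani\'c--Possama\"i--Touzi reduction does not survive the McKean--Vlasov dependence, and this proposition is where that failure bites.

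The paper's proof takes a genuinely different route. It truncates $(\alpha,\aleph,\xi,g)$ at level $n$, builds an interacting $n$--particle system with empirical measures in place of $\mu^\alpha$, and solves the \emph{finite--dimensional} quadratic BSDE of \cite[Corollary 6]{briand2008quadratic} with terminal condition $\xi^n(\Ubb^n)$ and generator $-\frac{1}{n}\sum_i H(\cdot,\nu^n,\cdot,n\widehat Z^i)$ — at the $n$--player level the measure argument is the empirical measure of the state, so the classical argument applies and yields $J^{\Cf^n}_n(\alphab)\le\E[\widehat Y^n_0]$. It then proves uniform $L^2$ bounds on $(n\widehat Z^i)$ via the coercivity of \Cref{assump:growth_coercivity1}, extracts weak limits of the empirical laws of $(X^i,\Lambda^i)$ (\Cref{prop:weak_borel}), and identifies the limit as a relaxed McKean--Vlasov control $\Gamma$ — which is why the statement of $(i)$ features a measure--valued $\Gamma\in\Mc$ rather than the Dirac $\delta_{Z_t}$ you propose. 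The inequality $\E[Y_0]\le\sup_{\alpha'}J^{\Cf}_A(\alpha')$, which you would need to read off the BSDE for free, is instead obtained by approximating the relaxed limit from below by genuine admissible controls (\Cref{prop:weak_appr}). Part $(ii)$ then follows as you anticipate (vanishing of the nonpositive gap $h-H$ plus uniqueness of the maximizer forces $\Gamma^\alpha$ to be a Dirac $\delta_{\gamma(t,X^\alpha)}$), but only once $(i)$ has been established by the limiting argument. To repair your proposal you would have to either justify a dynamic programming principle directly for the McKean--Vlasov agent problem (which the assumptions here do not support) or adopt the paper's $n$--player approximation.
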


\begin{remark}
    $(i)$ {\rm \Cref{prop:PA_Mckean_charac}} gives us a characterization of the optimal control of the problem faced by the agent. Putting aside the Principal--Agent problem aspect of the formulation, this proposition gives results regarding the McKean--Vlasov control problem that are never been established before under the assumptions we assumed. Namely, the map $(t,x,m) \mapsto \left(\aleph(t,x), \xi(x,m)\right)$ is just Borel measurable in $x$ and continuous in $m$. 

\medskip
    $(ii)$ The condition $``$$b$ is bounded $"$ can be weaken and replaced by some integrability condition. We chose it for simplicity and to avoid to lose the reader through unnecessary technicalities. 
\end{remark}

\begin{theorem}\label{thm:PA_McK_general}
    Under {\rm \Cref{assum:main1}},  {\rm \Cref{assump:growth_coercivity}} and {\rm \Cref{assump:growth_coercivity1}}, with $b$ bounded, we have 
$$  
    \overline{V}_{\Pr}=V_{\Pr}=U(\widehat{V} ).
$$    
   %{\color{red}and there exists an optimal contract for the problem of the Principal $\Cf^\star=(\xi^\star,\aleph^\star) \in \widehat{\Xi}$ associated to $\gamma^\star$} %s.t.
    %\begin{align*}
        %\E \left[ \xi^\star(X^\star) \right]
        %=
        %Y_0 - \E \left[\int_0^T  \widehat{L}(t,X^\star_t,\mu^\star, \aleph^\star(t,X^\star_t),\gamma^\star(t,X^\star_t))\mathrm{d}t \right]
    %\end{align*}
    %for some constant $Y_0 \ge R$ where $X^\star_0=\iota$,
    %\begin{align*}
     %   \mathrm{d}X^\star_t= \widehat{b}(t,X^\star_t,\mu^\star,\aleph^\star(t,X^\star_t), \gamma^\star(t,X^\star_t)) \mathrm{d}t + \sigma(t,X^\star_t) \mathrm{d}W_t,\;\;\mu^\star_t=\Lc(X^\star_t)
    %\end{align*}
    %for some $\gamma^\star$. The optimal control of the agent problem is given by
    %\begin{align*}
     %   \alpha^\star(t,X^\star_t)=\widehat{\alpha}(t,X^\star_t,\mu^\star_t, \aleph^\star(t,X^\star_t),\sigma(t,X^\star_t)^{-1}\gamma^\star(t,X^\star_t)).
    %\end{align*} 
\end{theorem}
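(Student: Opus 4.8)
The claim is the equality $\overline{V}_{\Pr}=V_{\Pr}=U(\widehat{V})$, where the second equality is already \Cref{thm:PA_McK_reduced}. So the only new content is $\overline{V}_{\Pr}=V_{\Pr}$. Since $\widehat{\Xi}\subset\Xi$ we trivially have $V_{\Pr}\le\overline{V}_{\Pr}$, so the work is the reverse inequality $\overline{V}_{\Pr}\le V_{\Pr}$. The idea is to show that for an \emph{arbitrary} contract $\Cf=(\xi,\aleph)\in\Xi$ we can find a contract in $\widehat{\Xi}$ that does at least as well for the principal; equivalently, that the extra structure imposed in $\widehat{\Xi}$ (the existence of $\gamma\in\Mc_0$ and $Y_0$ realizing the payment as a stochastic-integral representation of the agent's value process) is without loss of generality. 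The engine for this is \Cref{prop:PA_Mckean_charac}: part $(ii)$ says precisely that whenever the agent's problem $V_A^{\Cf}$ admits an optimizer $\alpha\in\Ac^\star(\Cf)$, there is $\gamma\in\Mc_0$ such that the optimal action has the feedback form $\widehat{\alpha}(t,X^\alpha_t,\mu^\alpha,\aleph(t,X^\alpha_t),\sigma^{-1}\gamma(t,X^\alpha))$ and the payment satisfies the representation $g(\mu^\alpha,\overline{\xi}(\mub^\alpha)) = V_A^{\Cf} - \E[\int_0^T\widehat{L}(t,X^\alpha_t,\mu^\alpha,\aleph(t,X^\alpha_t),\gamma(t,X^\alpha))\,\mathrm{d}t]$ along the optimal state. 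Setting $Y_0:=V_A^{\Cf}$ (which is $\ge R$ if $\alpha\in{\rm PE}[\Cf]$), this is exactly the defining identity of $\widehat{\Xi}$ evaluated at the optimal $\alpha$.

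Concretely I would proceed as follows. First, reduce to contracts $\Cf\in\Xi$ for which ${\rm PE}[\Cf]\ne\emptyset$ (otherwise the inner sup in $\overline{V}_{\Pr}$ is over the empty set and contributes nothing), and fix such a $\Cf$ together with $\alpha\in{\rm PE}[\Cf]$, so $\alpha\in\Ac^\star(\Cf)$ and $V_A^{\Cf}\ge R$. Second, apply \Cref{prop:PA_Mckean_charac}$(ii)$ to extract $\gamma$. The subtlety is that \Cref{prop:PA_Mckean_charac}$(ii)$ gives $\gamma\in\Mc_0$ as a \emph{path-dependent} feedback on $C([0,T];\R)$, and it produces the representation only \emph{along the optimal state law} $\mu^\alpha$, whereas membership in $\widehat{\Xi}$ demands (a) the exponential integrability of $\aleph,\gamma$ against the reference measure $\mathrm{d}U_t=\sigma\,\mathrm{d}W_t$, and (b) the representation identity for \emph{every} $\alpha'\in\Ac(\Cf)$. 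I would handle (b) by replacing the possibly non-Markovian $\gamma$ by a suitable Markovian modification that agrees with it on the support of the optimal trajectory, and by noting that the quantity $g(\mu^{\alpha'},\overline{\xi}(\mub^{\alpha'}))$ in fact depends on $\alpha'$ only through the terminal law data that feed $\overline{\xi}$ — so once the identity holds for one admissible control it propagates, using the dynamic-programming/BSDE structure underlying \Cref{prop:PA_Mckean_charac}$(i)$, to all of them (this is the standard "first-order condition determines the contract" mechanism of \cite{cvitanic2015dynamic} transported to the McKean--Vlasov setting). For (a), I would invoke the integrability already built into $\Ac(\Cf)$ for $\aleph$ together with the coercivity of $\widehat{L}$ in $z$ from \Cref{assump:growth_coercivity1}, which forces $\gamma$ to inherit the needed exponential moments; if that is not quite enough, one truncates $\gamma$ at level $\ell$ as in the construction preceding \Cref{thm:fromLimit} and passes $\ell\to\infty$, using continuity of all the coefficients.

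Third, with $\widetilde{\Cf}:=(\widetilde{\xi},\aleph)$ the contract so constructed in $\widehat{\Xi}$ (same $\aleph$, and $\widetilde{\xi}$ defined by the representation with $Y_0=V_A^{\Cf}$), one checks that $\alpha$ is still in ${\rm PE}[\widetilde{\Cf}]$ — it is optimal for the agent because, by the very form of $\widehat{\Xi}$ and the fact that $\widehat{\alpha}$ is the unique Hamiltonian maximizer (\Cref{eq:maximizee}), the agent's value equals $Y_0=V_A^{\Cf}\ge R$ and is attained by the feedback action above — and that the principal's reward is unchanged, since $J^{\alpha}_{\Pr}(\widetilde{\Cf})$ depends on $\widetilde{\Cf}$ only through $\aleph$ and through $g_{\Pr}(\mu^\alpha,\overline{\xi}(\mub^\alpha))=\widehat{g}_{\Pr}(\mu^\alpha, g(\mu^\alpha,\overline{\xi}(\mub^\alpha)))$, and $g(\mu^\alpha,\overline{\xi}(\mub^\alpha))$ is preserved by construction. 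Hence $J^\alpha_{\Pr}(\Cf)=J^\alpha_{\Pr}(\widetilde{\Cf})\le V_{\Pr}$. Taking suprema over $\Cf\in\Xi$ and $\alpha\in{\rm PE}[\Cf]$ yields $\overline{V}_{\Pr}\le V_{\Pr}$, and combined with the trivial inequality and \Cref{thm:PA_McK_reduced} this gives $\overline{V}_{\Pr}=V_{\Pr}=U(\widehat{V})$. The main obstacle, as flagged above, is the passage from "representation holds along the optimal state" to "$\widetilde{\Cf}\in\widehat{\Xi}$", i.e. upgrading the pointwise/path-dependent $\gamma$ from \Cref{prop:PA_Mckean_charac}$(ii)$ to a genuinely admissible Markovian contract-generating field with the right exponential integrability and the representation valid for all competing controls; the boundedness of $b$ assumed in the theorem is exactly what keeps the Girsanov densities $L^{\alpha}$ under control and makes this upgrade go through uniformly.
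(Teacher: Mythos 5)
Your setup (the trivial inclusion $V_{\Pr}\le\overline{V}_{\Pr}$ and the appeal to \Cref{prop:PA_Mckean_charac}$(ii)$ at an optimal $\alpha$) matches the paper's, but from there the routes diverge, and yours has a genuine gap at exactly the step you yourself flag as ``the main obstacle''. The paper never tries to upgrade the data $(\aleph,\gamma,Y_0)$ produced by \Cref{prop:PA_Mckean_charac}$(ii)$ into an admissible contract of $\widehat{\Xi}$; instead it reads that data as an (a priori path-dependent) control for the limit McKean--Vlasov problem, invokes \Cref{prop:markovian_appr} to approximate it \emph{in value} by bounded Lipschitz Markovian controls, and concludes $\overline{V}_{\Pr}\le U(\widehat{V})$ directly; the chain then closes via $U(\widehat{V})=V_{\Pr}$ from \Cref{thm:PA_McK_reduced}. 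This sidesteps all three admissibility requirements of $\widehat{\Xi}$ that your construction would have to verify, and it is precisely those that your sketch does not deliver.

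Concretely: (a) the coercivity of $\widehat{L}$ in $z$ from \Cref{assump:growth_coercivity1} yields only $L^2$-type bounds on $\gamma$ (this is what \Cref{prop:coercivity} extracts), not the exponential moments demanded by $\widehat{\Xi}$, so ``coercivity forces $\gamma$ to inherit the needed exponential moments'' is false as stated; (b) the defining identity of $\widehat{\Xi}$ must hold for \emph{every} $\alpha'\in\Ac(\Cf)$, and your propagation argument rests on the claim that $g(\mu^{\alpha'},\overline{\xi}(\mub^{\alpha'}))$ depends on $\alpha'$ only through ``terminal law data'', which is not true (it depends on the full path law); even after redefining $\widetilde{\xi}$ by the stochastic-integral formula you still owe the verification that $\widetilde{\xi}$ is an admissible element of $\Xi$ with the required growth and continuity in the measure argument, and that the representation holds against all competing controls; (c) the $\gamma$ furnished by \Cref{prop:PA_Mckean_charac}$(ii)$ is path-dependent while $\widehat{\Xi}$ requires a Markovian field $\gamma:[0,T]\x\R\to\Er$, and a ``Markovian modification agreeing on the support of the optimal trajectory'' would in general alter the controlled McKean--Vlasov dynamics, so it cannot simply be waved through. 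If you truncate $\gamma$ at level $\ell$ to repair (a), you must then control the resulting error in the principal's reward as $\ell\to\infty$, i.e.\ you need a stability result for the value --- which is exactly the role \Cref{prop:markovian_appr} plays in the paper's argument. The fix is to restructure along the paper's lines: bound $J_{\Pr}^{\alpha}(\Cf)$ above by $U(\widehat{V})$ using the representation from \Cref{prop:PA_Mckean_charac}$(ii)$ together with $Y_0\ge R$ and the Markovian approximation, rather than attempting to re-enter $\widehat{\Xi}$.
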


\begin{remark}
    {\rm \Cref{thm:PA_McK_general}} actually shows that, even if we allow a more general set of contracts, it is actually optimal to only consider contracts over the set $\widehat{\Xi}$. Combining this result and the convergence of the problem of Principal with multiple Agents, by classical techniques $($see for instance  {\rm \cite{MFD_PA_comp}}$)$, we can show that the limit of any sequence of optimal contracts for the problem of Principal with $n$ Agents is related to a contract or sequence of contracts belonging to $\widehat{\Xi}$ when $n$ goes to infinity. Also, by using similar techniques to {\rm \Cref{thm:fromLimit}}, any contract of $\widehat{\Xi}$ can be approximated by a sequence of contracts admissible for the problem of Principal with $n$ Agents. 
\end{remark}

\subsection{Connection to the problem of multitask Principal--Agent} \label{sec:multitask}

%\section{Main results}
The multitask Principal--Agent problem has been presented and explored by \citeauthor*{Holmstrom_Milgrom_multitask} in \cite{Holmstrom_Milgrom_multitask}. As explained in the introduction, the Multitask Principal--Agent problem occurs when a principal assigns multiple tasks that can be correlated to an agent. The challenge lies in designing incentives that motivate the agent to allocate effort across all tasks efficiently. We revisit here a simple example and see his connection with the previous problems examined. We will focus on the impact of the correlation/interdependence of the tasks. The model is as follow: with a contract $\Cf^n=\xi^n$, the agent is facing the problem
\begin{align*}
    \sup_{\alphab \in \Ac_n^n}\E \left[ \xi^n(\Xbb^{\alphab}) - \frac{1}{2n} \sum_{i=1}^n \int_0^T |\alpha^{i,n}(t,\Xbb^{\alphab})|^2 \mathrm{d}t \right]\;\mbox{with}\;\mathrm{d}X^{i,\alphab}_t=\Big( \alpha^{i,n}(t,\Xbb^{\alphab}) + \frac{\overline{\kappa}}{n}\sum_{j=1}^n b(X^{j,\alphab}_t ) \Big)\;\mathrm{d}t + \mathrm{d}W^i_t
\end{align*}
where $\Xbb^{\alphab}=(X^{1,\alphab},\dots,X^{n,\alphab})$ is the production. The map $b$ is given by $b(x)= -\overline{b} \vee (\overline{b} \wedge x)$ with $\overline{b}>0$. The number of tasks $n$ and the bound $\overline{b}$ are very large. The dynamics of the production of one task $X^i$ is impacted by the production of all the tasks through the component $\frac{\overline{\kappa}}{n}\sum_{j=1}^n b(X^j_t )$. The Principal is then trying to solve
\begin{align*}
   V_{n,\Pr}=\sup_{\xi^n} \sup_{\alphab \in {\rm PE}[\Cf^n]} \E \left[ U \left( \frac{1}{n} \sum_{i=1}^n X^{i,\alphab}_T - \xi^n(\Xbb^{\alphab}) \right) \right].
\end{align*}
Although the interpretation and the meaning of the equations are different, from a mathematical point of view, the previous formulation falls into the framework of the problem of principal with multiple agents in cooperation of \Cref{ssec:equidyn}. Let us then use the approach developed in \Cref{ssec:equidyn}.  The map $\widehat{\alpha}$ is given by $\widehat{\alpha}(t,x,m,e,z)=\alpha^\star(t,z)$ where
\begin{align*}
    \alpha^\star(t,z)
    :=
    z.
\end{align*}

According to what we proved in \Cref{thm:cong_PA_comp}, the corresponding McKean--Vlasov problem is:
\begin{align*}
    \widehat{V}=\sup_{\gamma} \widehat{J}(\gamma)\;\mbox{with}\;\widehat{J}(\gamma):=\E \left[ X_T - R - \int_0^T \frac{1}{2} \gamma(t,X_t)^2 \mathrm{d}t \right]\mbox{ where }\mathrm{d}X_t=\Big( \gamma(t,X_t) + \overline{k} \E[b(X_t)] \Big)\mathrm{d}t + \sigma \mathrm{d}W_t.
\end{align*}
We define the controls $\widehat{\gamma}(t,x):=e^{\overline{k}(T-t)}$, $\widehat{\alphab}^n:=(\widehat{\alpha}^{1,n}, \cdots,\widehat{\alpha}^{n,n})$ with $\widehat{\alpha}^{i,n}(t,x^1,\cdots,x^n)=e^{\overline{k}(T-t)}$, and the contract $\Cf^n:=\xi^n$, for any $\alphab$
\begin{align*}
    \xi^n(\Xbb^{\alphab})
    :=
    R -  \int_0^T \frac{1}{2}|\widehat{\gamma}_t|^2 + \frac{\overline{\kappa}}{n} \sum_{i=1}^n b(X^{i,\alphab}_t) \mathrm{d}t + \frac{1}{n} \sum_{i=1}^n \int_0^T \widehat{\gamma}_t \mathrm{d}X^{i,\alphab}_t. %\int_0^T \widehat{\gamma}_t \mathrm{d}x^i_t,
\end{align*}
%where the map $\psi^\xi$ satisfies: for any semi--martingale $(S_t)_{t \in [0,T]}$ s.t. $\int_0^T \widehat{\gamma}_t^2 \mathrm{d}\langle S \rangle_t < \infty$ a.e., $\psi^{\xi}(S)=\int_0^T \widehat{\gamma}_t \mathrm{d}S_t$.

\begin{proposition} \label{prop:multitask}
    We have that $\lim_{n \to \infty} V_{n,p}=U(\widehat{V})$. If we assume that $U$ is Lipschitz, there exists a positive constant $C$ independent of $n$ and $\overline{b}$ s.t.
    \begin{align*}
        V_{n,\Pr}- J^{\hat{\alphab}^n}_{n,\Pr}(\Cf^n) \le C ( n^{-1/2} + \overline{b}^{-1})\;\mbox{and}\;\widehat{V}-\widehat{J}(\widehat{\gamma}) \le C \overline{b}^{-1}.
    \end{align*}
    The contract $\xi^n$ satisfies $\xi^n ( \Xbb^{\hat \alphab^n} )= R + \frac{1}{2} \int_0^T |\widehat{\gamma}_t|^2 \mathrm{d}t + \frac{1}{n} \sum_{i=1}^n \int_0^T \widehat{\gamma}_t \mathrm{d}W^i_t$ and
    \begin{align*}
        J^{\hat \alphab^n}_{n,\Pr}(\Cf^n)=\E \left[ U \left( - R + e^{\overline{k}T}\frac{1}{n} \sum_{i=1}^n \iota^i + \frac{1}{2} \int_0^T |\widehat{\gamma}_t|^2 \mathrm{d}t + \frac{2}{n} \sum_{i=1}^n \int_0^T \widehat{\gamma}_t \mathrm{d}W^i_t  \right) \right].
    \end{align*}
\end{proposition}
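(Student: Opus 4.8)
\textbf{Proof proposal for Proposition \ref{prop:multitask}.}

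The plan is to treat this as a direct application of Theorems \ref{thm:cong_PA_comp} and \ref{thm:fromLimit} to the explicit multitask model, and then to make the constants quantitative by exploiting that everything can be solved in closed form. First I would verify that the multitask setup fits the framework of \Cref{ssec:equidyn}: here $\sigma \equiv 1$ (or a fixed constant), $b(t,x,\pi,e,a) = a + \overline{\kappa}\int b(x')\pi(\mathrm{d}x')$, the agent running cost is $L(t,x,\pi,e,a) = -\tfrac12 a^2$, the contract carries no instantaneous component ($\aleph^{i,n}\equiv 0$, i.e.\ $\Er = \{0\}$ effectively, with $g(m,\cdot)$ the identity and $g^{-1}$ the identity), $\Upsilon(x) = x$, $L_{\Pr}\equiv 0$, and the reservation utility is $R$. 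One checks \Cref{assum:main1}, \Cref{assump:growth_coercivity} and \Cref{assump:growth_coercivity1} hold: the truncation $b(x) = -\overline b \vee(\overline b\wedge x)$ makes the drift bounded and Lipschitz, the Hamiltonian $a\mapsto az - \tfrac12 a^2$ has the unique maximizer $\widehat\alpha = z$, hence $\widehat{b}(t,x,\pi,e,z) = z + \overline\kappa\int b\,\mathrm{d}\pi$ and $\widehat L(t,x,\pi,e,z) = -\tfrac12 z^2$, all satisfying the required growth/coercivity bounds. Then \Cref{thm:cong_PA_comp} immediately gives $\lim_n V_{n,\Pr} = U(\widehat V)$ with $\widehat V$ the stated McKean--Vlasov problem, which is the first assertion.

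Next I would solve the limit problem $\widehat V$ and show $\widehat\gamma(t,x) = e^{\overline\kappa(T-t)}$ is essentially optimal. Since $\widehat J(\gamma) = \E[X_T] - R - \tfrac12\E\int_0^T\gamma(t,X_t)^2\,\mathrm{d}t$ and $\E[X_T] = \E[\iota] + \E\int_0^T(\gamma(t,X_t) + \overline\kappa\,\E[b(X_t)])\,\mathrm{d}t$, if one \emph{ignores} the truncation (pretending $b(x) = x$), the problem becomes linear-quadratic: maximize $\E\int_0^T(\gamma_t + \overline\kappa X_t - \tfrac12\gamma_t^2)\,\mathrm{d}t$, whose first-order condition via the adjoint equation $-\dot p_t = \overline\kappa$, $p_T = 1$ gives $p_t = 1 + \overline\kappa(T-t)$... but the stated answer is $e^{\overline\kappa(T-t)}$, which is the adjoint for the \emph{untruncated} linear drift $\overline\kappa X_t$ entering multiplicatively — i.e.\ the sensitivity $\partial X_T/\partial(\text{effort at }t)$ when the feedback $\overline\kappa X$ is present. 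So I would instead argue directly: with $\widehat\gamma_t = e^{\overline\kappa(T-t)}$ and the truncated dynamics, compute $\widehat J(\widehat\gamma)$ explicitly, and bound $\widehat V - \widehat J(\widehat\gamma)$ by a Gr\"onwall/duality estimate controlling the error introduced by replacing $b(x)$ by $x$, which is supported on $\{|X_t| > \overline b\}$; since $X$ has Gaussian-type tails with variance $O(T)$ uniformly in the controls of interest (drift bounded by $\overline b\,\overline\kappa$ plus $\|\gamma\|_\infty$), $\P(|X_t| > \overline b) \le C\overline b^{-1}$ and the cost/reward contribution is $O(\overline b^{-1})$, giving $\widehat V - \widehat J(\widehat\gamma) \le C\overline b^{-1}$.

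For the finite-$n$ rate, I would combine the construction of \Cref{thm:fromLimit} (the contract $\xi^n$ displayed before the proposition is exactly the $\Cf^{\ell,n}$ construction with $\ell=\infty$, since $\widehat\gamma$ and $\aleph\equiv 0$ are already bounded, so no truncation in $\ell$ is needed) with a propagation-of-chaos estimate: the $n$-particle system with feedback $\widehat\gamma$ converges to the McKean--Vlasov limit at rate $n^{-1/2}$ in $L^2$ because the drift $x\mapsto \overline\kappa\,\tfrac1n\sum b(x^j)$ is bounded and Lipschitz (Sznitman-type coupling, standard). Since $U$ is Lipschitz, $|J^{\widehat\alphab^n}_{n,\Pr}(\Cf^n) - U(\widehat J(\widehat\gamma))| \le C n^{-1/2}$, and together with $U(\widehat V) - U(\widehat J(\widehat\gamma)) \le C\overline b^{-1}$ and $\lim_n V_{n,\Pr} = U(\widehat V) \ge V_{n,\Pr} - o(1)$... here I would be careful: I actually need $V_{n,\Pr} - J^{\widehat\alphab^n}_{n,\Pr}(\Cf^n) \le C(n^{-1/2} + \overline b^{-1})$ for fixed $n$, not just in the limit, which requires the upper bound $V_{n,\Pr} \le U(\widehat V) + Cn^{-1/2}$ with an explicit rate — this is the one place the bare statement of \Cref{thm:cong_PA_comp} is insufficient and I would need to revisit its proof to extract a rate under the present bounded/Lipschitz coefficients (the paper's Remark after \Cref{thm:cong_PA_comp} asserts this is possible). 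Finally, the closed-form expressions for $\xi^n(\Xbb^{\widehat\alphab^n})$ and $J^{\widehat\alphab^n}_{n,\Pr}(\Cf^n)$ follow by substituting $\alpha^{i,n} = \widehat\gamma_t$ into the dynamics, noting $\mathrm{d}X^{i}_t = (\widehat\gamma_t + \tfrac{\overline\kappa}{n}\sum_j b(X^j_t))\mathrm{d}t + \mathrm{d}W^i_t$ so that $\int_0^T\widehat\gamma_t\,\mathrm{d}X^i_t = \int_0^T\widehat\gamma_t(\widehat\gamma_t + \tfrac{\overline\kappa}{n}\sum_j b(X^j_t))\mathrm{d}t + \int_0^T\widehat\gamma_t\,\mathrm{d}W^i_t$; plugging into the definition of $\xi^n$ cancels the drift and truncation terms, leaving $\xi^n = R + \tfrac12\int_0^T\widehat\gamma_t^2\,\mathrm{d}t + \tfrac1n\sum_i\int_0^T\widehat\gamma_t\,\mathrm{d}W^i_t$, and then $\tfrac1n\sum_i X^i_T - \xi^n = e^{\overline\kappa T}\tfrac1n\sum_i\iota^i + (\text{drift terms}) - R - \tfrac12\int\widehat\gamma^2 - \tfrac1n\sum_i\int\widehat\gamma\,\mathrm{d}W^i$, and after recombining one obtains the displayed formula for $J^{\widehat\alphab^n}_{n,\Pr}(\Cf^n)$ (the reader should check the arithmetic of the $\overline\kappa$-drift terms against $\int\widehat\gamma\,\mathrm{d}X^i$; the factor $2/n$ on the stochastic integral reflects that it appears once in $X^i_T$ and once with opposite sign... actually with the \emph{same} sign after the cancellation, hence the $2$).

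The main obstacle is the explicit rate in the finite-$n$ upper bound $V_{n,\Pr} \le U(\widehat V) + Cn^{-1/2}$: \Cref{thm:cong_PA_comp} only gives convergence, so one must either re-run its proof tracking constants (feasible here because all coefficients are bounded and Lipschitz and $U$ is Lipschitz, so the compactness/limit arguments can be replaced by quantitative propagation-of-chaos and BSDE stability estimates) or invoke a quantitative version; the lower bound side $V_{n,\Pr} \ge J^{\widehat\alphab^n}_{n,\Pr}(\Cf^n)$ is free since $(\Cf^n,\widehat\alphab^n)$ is admissible once one checks $\widehat\alphab^n \in {\rm PE}[\Cf^n]$, which holds by the same verification as in \Cref{thm:fromLimit} (the agent's best response to $\xi^n$ is indeed $\widehat\gamma$ because the contract is linear in the output increments with slope $\widehat\gamma$, making the agent's Hamiltonian maximized at $a = \widehat\gamma_t$, and the reservation constraint is met with equality by construction).
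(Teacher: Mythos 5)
Your verification that the multitask model fits the framework, the identification $\widehat{\alpha}(t,x,m,e,z)=z$, the qualitative limit $\lim_n V_{n,\Pr}=U(\widehat{V})$ via \Cref{thm:cong_PA_comp}, and the $\overline{b}^{-1}$ estimate for the limit problem (comparing the truncated drift with the identity via Gr\"onwall, the error being supported on $\{|X_t|>\overline{b}\}$ and controlled by the uniform $L^2$ bound on admissible controls coming from coercivity) all match the paper. One small slip: your adjoint equation $-\dot p_t=\overline{\kappa}$ is wrong — the mean enters the drift linearly, so the correct adjoint is $-\dot p_t=\overline{\kappa}p_t$, $p_T=1$, i.e.\ $p_t=e^{\overline{\kappa}(T-t)}$; you noticed the mismatch, and the paper simply computes $\E[X^{\infty,\gamma}_T]=e^{\overline{\kappa}T}\E[\iota]+\int_0^T e^{\overline{\kappa}(T-t)}\E[\gamma_t]\,\mathrm{d}t$ and optimizes pointwise, which yields $\widehat{\gamma}_t=e^{\overline{\kappa}(T-t)}$ directly.

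The genuine gap is exactly where you flag it: your route to $V_{n,\Pr}-J^{\hat{\alphab}^n}_{n,\Pr}(\Cf^n)\le C(n^{-1/2}+\overline{b}^{-1})$ needs the finite--$n$ upper bound $V_{n,\Pr}\le U(\widehat{V})+Cn^{-1/2}$, i.e.\ a rate for \Cref{thm:cong_PA_comp}, which you propose to obtain by re-running that proof. This is not routine: that proof proceeds by relative compactness in Wasserstein and identification of weak limit points, which yields no rate, and upgrading it to a quantitative propagation-of-chaos-plus-BSDE-stability argument is a separate project. The paper never does this. After the exact reduction $V_{n,\Pr}=\widehat{V}_{n,\Pr}$ of \Cref{prop:reformulation_principal}, it observes that the finite--$n$ control problem is itself solvable in closed form once two explicit corrections are made: (i) replacing the truncated drift by the identity costs $C\overline{b}^{-1}$ uniformly in $n$ (Gr\"onwall plus the uniform bound $\frac1n\sum_i\E\big[\int_0^T|nZ^i_t|^2\mathrm{d}t\big]\le K$ from coercivity); (ii) deleting the martingale part $\sum_i\int_0^T Z^i_t\,\mathrm{d}W^i_t$ of the contract inside the Lipschitz $U$ costs $Cn^{-1/2}$ by the same $L^2$ bound. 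The resulting problem $\widetilde{V}^{\infty}_{n,\Pr}$ is then maximized pointwise (the empirical mean of the untruncated system solves a linear equation), its optimizer is again $e^{\overline{\kappa}(T-t)}$, and the chain of inequalities closes at fixed $n$. So the $n^{-1/2}$ has nothing to do with propagation of chaos — it is the size of the martingale term in the contract — and the quantitative part of your argument should be restructured along these lines rather than by seeking a rate for the mean-field convergence. Finally, your arithmetic for $\xi^n(\Xbb^{\hat{\alphab}^n})$ and the factor $2/n$ is left unresolved (``once with opposite sign\dots actually with the same sign''); this substitution has to be carried out carefully against the definition of $\xi^n$ and of the dynamics, and cannot be waved through.
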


\begin{remark}
    $(i)$ Since the number of tasks $n$ and the bound $\overline{b}$ are taken very large, the contract $\xi^n$ given in the proposition provide a good approximation for the problem of the Principal.

\medskip
    $(ii)$ We chose to stay in this simple situation to highlight the important of the correlation/interdependence parameter $\overline{\kappa}$. The case $\overline{\kappa}=0$ means no correlation between the tasks. This result shows that the more positive and larger $\overline{\kappa}$ is, the greater the utility of the principal. Conversely, the more negative and smaller $\overline{\kappa}$ is, the smaller the utility of the principal. The variance of the contract is small since $n$ is considered very large.
    
\medskip
    $(iii)$ The case $\overline{\kappa}=0$ can be seen as a situation where the principal and the agent are unaware of the correlation/interdependence of the tasks due to lack of technological knowledge for instance. By offering a contract under the assumption that $\overline{\kappa}=0$, the principal is actually reducing his utility if the true $\overline{\kappa}>0$ is positive. In contrast, the principal is increasing his utility if the true $\overline{\kappa} < 0$ is negative.
\end{remark}

\section{Proofs of the main results} \label{sec:proofs}

\subsection{Convergence in the problem of Principal with multiple agents}

This section is devoted to the proof of \Cref{thm:cong_PA_comp} and \Cref{thm:fromLimit}. We will start in the next section by providing a characterization through BSDEs of the $n$--agent problem.
\paragraph*{BSDE characterization of the $n$--agent problem} Recall that the map $\widehat{\alpha}(t,x,\pi,e,z)$ was introduced in \eqref{eq:maximizee} for any $(t,x,\pi,e,z)$.
%We introduce the Borel measurable map $\widehat{\alpha}:[0,T] \x \R \x C([0,T] \x \Pc(\R)) \x E \x \R \to U$ by: for each $(t, x,m, e , z)$, $\widehat{\alpha} \left(t, x,m, e , z \right)$ belongs to the argmax of 
%\begin{align*}
    %U \ni u \mapsto b \left(t,x,m,e,u \right) z +  L \left(t,x,m,e,u \right) \in \R.
%\end{align*}
%We set 
%\begin{align*}
    %(\widehat{b}, \widehat{L} ) \left( t, x,m, e , z \right):= \left( b, L \right) \left(t,x,m,e, \widehat{\alpha} \left( t, x,m, e , \sigma^{-1}(t,x) z \right) \right)
    %\;\;\mbox{and}\;\;\widehat{L} \left( t, x,m, e , z \right):= L \left(t,x,m,e, \widehat{\alpha} \left( t, x,m, e , \sigma^{-1}(t,x) z \right) \right).
%\end{align*}
We remind the maps $h(t,x,\pi,e,z,u):=b \left(t,x,\pi,e,u \right) \sigma^{-1}(t,x)z +  L \left(t,x,\pi,e,u \right)$ and $H(t,x,\pi,e,z)=\widehat{L}(t,x,\pi,e,z) + \sigma^{-1}(t,x) z \;\widehat{b}(t,x,\pi,e,z).$
We denote by $\S$ the set of $\R$--valued continuous $\F$--adapted process $S$ verifying: $\E\left[ \exp\left( a \sup_{0 \le t \le T} |S_t| \right) \right] < \infty$ for any $a \ge 0$. We write $\H$ the set of $\R$--valued $\F$--progressively measurable $K$ s.t. $\E\left[ \left(\int_0^T |K_t|^2 \mathrm{d}t \right)^{q/2} \right] < \infty$ for any $q \ge 1$. We also consider $\H_{\exp}$ the set of $\R$--valued $\F$--progressively measurable $K$ s.t. $\E \left[ \exp \left( b\;\int_0^T|K_t| \mathrm{d}t \right) \right] < \infty$ for any $b \ge 0$.

\medskip
Let $\Cf^n=(\xi^n,\alephbb^n)$ be a contract. We consider the processes $(Y^n,\Zbb^n:=(Z^{1,n},\cdots,Z^{n,n}),\Xbb^n)$ satisfying: $(Y^n,\Zbb^n) \in \S \x \H^n$, $(Y^n,\Zbb^n,\Wbb^n)$ is $\left(\sigma \left( \Xbb^n_s,\;s \le t \right) \right)_{t \le T}$--adapted, for all $t \in [0,T],$ a.e. 
%$$
 %   {\mub}^n_t:=\frac{1}{n} \sum_{i=1}^n \delta_{\left(X^{i,n}_t, \;\aleph^{i,n}(t,\Xbb),\;n Z^i_t \right)},
%$$
\begin{align} \label{eq:BSDE1_agent}
    Y^n_t
    =
    g\left( \mu^n,\xi^n(\Xbb) \right)
    +
    &\frac{1}{n} \sum_{i=1}^n \int_t^T  \widehat{L}\left(t,X^{i}_s,,\mu^n,\aleph^{i,n}(s,\Xbb^n),nZ^{i,n}_s \right)\;\mathrm{d}s 
    - \sum_{i=1}^n\int_t^T Z^{i,n}_s \mathrm{d}W^i_s
\end{align}
with ${\mu}^n_t:=\frac{1}{n} \sum_{i=1}^n \delta_{X^{i,n}_t}$
and for each $1 \le i \le n$,
\begin{align} \label{eq:BSDE2_agent}
    X^{i,n}_t = \iota^i + \int_0^t  \widehat{b}\left(s,X^i_{s}, \mu^n ,\aleph^{i,n}(s,\Xbb^n),n Z^{i,n}_s \right) \;\mathrm{d}s 
        +
        \int_0^t \sigma(s,X^i_s) \mathrm{d}W^i_s.
\end{align}
%Notice that, under \Cref{assump:growth_coercivity} and \Cref{assump:growth_coercivity1},  the processes $(Y^n,\Zbb^n,\Xbb^n)$ are uniquely defined in distribution and  we can verify that $(Y^n,\Zbb^n,\Wbb^n)$ is $\left(\sigma \left( \Xbb^n_s,\;s \le t \right) \right)_{t \le T}$--adapted. 
We set $\alpha^{\star,n}:=(\alpha^{\star,1,n},\cdots,\alpha^{\star,n,n})$ by
\begin{align*}
    \alphab^{\star,i,n}(t,\Xbb^n):=\widehat{\alpha}\left(t,X^i_{t}  , \mu^n,\aleph^{i,n}(t,\Xbb^n),,\sigma(t,X^i_t)^{-1}\;n Z^{i,n}_t\right).
\end{align*}

\begin{proposition} \label{prop:characterization_agent}
    Under {\rm\Cref{assum:main1}}, {\rm \Cref{assump:growth_coercivity}} and {\rm \Cref{assump:growth_coercivity1}}, given a contract $\Cf^n=(\xi^n, \aleph^n)$ and an equilibrium  $\alphab:=(\alpha^{1,n},\cdots,\alpha^{n,n})$ i.e. belonging to $\Ac^{\star,n}_n(\Cf^n)$, then $(Y^n,\Zbb^n,\Xbb^n)$ is well--defined, 
    \begin{align*}
        \alpha^{i,n}\left( t, \Xbb^n\right)
        =
        \widehat{\alpha}\left(t,X^i_{t}  , \mu^n,\aleph^{i,n}(t,\Xbb^n),\sigma(t,X^i_t)^{-1}\;n Z^{i,n}_t\right)\;\mathrm{d}t \otimes\mathrm{d}\P\mbox{--a.e. and }\E\left[\Rr^{\Cf^n}_n(\alphab) \big| \Xbb^n_0 \right]=Y^n_0.
    \end{align*}
    Also, if $(Y^n,\Zbb^n,\Xbb^n)$ is well--defined and $\alphab^{\star,n}$ is admissible i.e. belongs to $\Ac^n_n$, then $\alphab^{\star,n} \in \Ac^{\star,n}_n(\Cf^n)$.
    %Also, for any equilibrium $\betab^{\star,n}:=\left( \beta^{\star,1},\cdots, \beta^{\star,n} \right)$ associated to the contract $\Cf^n$, we have:$\;\mathrm{d}t \otimes \mathrm{d}\P\mbox{--a.e.}$
    %\begin{align*}
        %h\left( t,X^{i,n}_t, \mu^n_t, \aleph^{i,n}(t,\Xbb^n), n Z^{i,n}_t, \beta^{\star,i}(t,\Xbb^n) \right)=H \left(t, X^{i,n}_t, \mu^n_t, \aleph^n(t, \Xbb^n ), n Z^i_t \right),
    %\end{align*}
\end{proposition}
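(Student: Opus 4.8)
\emph{Strategy.} The plan is to follow the approach of \cite{cvitanic2015dynamic}, adapted to the cooperative mean--field setting. The key simplification is that a Pareto equilibrium in the sense of \Cref{def:Nplayers--equilibria} is nothing but a maximiser of the \emph{single} functional $J^{\Cf^n}_n$, so that no deviating--agent fixed point is required. First I would pass to a weak formulation: fixing $\Cf^n=(\xi^n,\alephbb^n)$ and the reference process $U^i_\cdot=\iota^i+\int_0^\cdot\sigma(t,U^i_t)\,\mathrm{d}W^i_t$, $\nu^n_t=\frac1n\sum_i\delta_{U^i_t}$, each admissible control $\alphab^n=(\alpha^{i,n})_i$ defines through Girsanov a probability $\P^{\alphab^n}\sim\P$ whose density is the stochastic exponential of $\sum_i\int_0^\cdot\sigma(t,U^i_t)^{-1}b(t,U^i_t,\nu^n,\aleph^{i,n}(t,\Ubb^n),\alpha^{i,n}(t,\Ubb^n))\,\mathrm{d}W^i_t$; this is a true martingale thanks to the linear growth of $b$ in $(a,e)$ and the exponential integrability imposed on contracts and controls. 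Under $\P^{\alphab^n}$ the shifted Brownian motions $W^{i,\alphab^n}$ turn $\Ubb^n$ into a copy of the production $\Xbb^{\alephbb^n,\alphab^n}$, whence $J^{\Cf^n}_n(\alphab^n)=\E^{\P^{\alphab^n}}\big[\tfrac1n\sum_i\int_0^TL(t,U^i_t,\nu^n,\aleph^{i,n},\alpha^{i,n})\,\mathrm{d}t+g(\nu^n,\xi^n(\Ubb^n))\big]$, which is the functional to be maximised.

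\emph{From an equilibrium to the FBSDE.} Given $\alphab^n\in\Ac^{\star,n}_n(\Cf^n)$, I would introduce the value process $Y^n_t:=\E^{\P^{\alphab^n}}\big[g(\nu^n,\xi^n(\Ubb^n))+\tfrac1n\sum_i\int_t^TL(s,U^i_s,\nu^n,\aleph^{i,n},\alpha^{i,n})\,\mathrm{d}s\,\big|\,\sigma(\Ubb^n_s,s\le t)\big]$. Martingale representation under $\P^{\alphab^n}$ together with Girsanov produces $\Zbb^n=(Z^{i,n})_i$ for which, written back under $\P$, $(Y^n,\Zbb^n)$ solves a BSDE driven by $\Wbb^n$ with terminal value $g(\nu^n,\xi^n(\Ubb^n))$ and generator $\tfrac1n\sum_ih(t,U^i_t,\nu^n,\aleph^{i,n},nZ^{i,n}_t,\alpha^{i,n}_t)$; the a priori estimates for BSDEs with exponentially integrable data give $Y^n\in\S$, $\Zbb^n\in\H^n$. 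Since $h(t,x,\pi,e,z,\cdot)$ is maximised exactly at $\widehat\alpha(t,x,\pi,e,\sigma(t,x)^{-1}z)$, I would argue that if $\alpha^{i,n}$ differed from $\widehat\alpha(t,U^i_t,\nu^n,\aleph^{i,n},\sigma(t,U^i_t)^{-1}nZ^{i,n}_t)$ on a set of positive $\mathrm{d}t\otimes\mathrm{d}\P$--measure, replacing it there by the latter would strictly increase $\E[Y^n_0]=J^{\Cf^n}_n(\alphab^n)$ by the comparison theorem for BSDEs, contradicting optimality; the assumed uniqueness of the maximiser then forces $\alpha^{i,n}(t,\Ubb^n)=\widehat\alpha(t,U^i_t,\nu^n,\aleph^{i,n}(t,\Ubb^n),\sigma(t,U^i_t)^{-1}nZ^{i,n}_t)$, $\mathrm{d}t\otimes\mathrm{d}\P$--a.e. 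Plugging this identity back into the generator turns it into $\tfrac1n\sum_i(\widehat L+nZ^{i,n}\sigma^{-1}\widehat b)$, and reverting to $\P^{\alphab^n}$ (under which $\mathrm{d}W^i=\sigma^{-1}\widehat b\,\mathrm{d}t+\mathrm{d}W^{i,\alphab^n}$) cancels the $\sigma^{-1}\widehat b$ contribution; setting $\Xbb^n:=\Ubb^n$ and realising $\P^{\alphab^n}$ back on $(\Om,\H,\P)$, the triple $(Y^n,\Zbb^n,\Xbb^n)$ then satisfies \eqref{eq:BSDE1_agent}--\eqref{eq:BSDE2_agent}, while $Y^n_0=\E^{\P^{\alphab^n}}[\Rr^{\Cf^n}_n(\alphab^n)\,|\,\Xbb^n_0]$ by construction.

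\emph{From the FBSDE to an equilibrium.} Conversely, given a solution $(Y^n,\Zbb^n,\Xbb^n)$ of \eqref{eq:BSDE1_agent}--\eqref{eq:BSDE2_agent} with $\alphab^{\star,n}$ admissible, I would view $\Xbb^n$ as the canonical process and, for an arbitrary $\alphab^n\in\Ac_n^n$, introduce the Girsanov measure $\P^{\alphab^n}$ giving $\Xbb^n$ the drift $b(\cdot,\alpha^{i,n})$, so that under $\P^{\alphab^n}$ one has $\mathrm{d}W^i_t=\sigma^{-1}(b(\cdots,\alpha^{i,n})-\widehat b(\cdots,nZ^{i,n}))\,\mathrm{d}t+\mathrm{d}W^{i,\alphab^n}_t$. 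Applying It\^o's formula to $Y^n_t+\tfrac1n\sum_i\int_0^tL(s,X^i_s,\mu^n,\aleph^{i,n},\alpha^{i,n})\,\mathrm{d}s$ produces a drift equal to $\tfrac1n\sum_i\big(h(t,X^i_t,\mu^n,\aleph^{i,n},nZ^{i,n}_t,\alpha^{i,n}_t)-H(t,X^i_t,\mu^n,\aleph^{i,n},nZ^{i,n}_t)\big)\le0$, which vanishes $\mathrm{d}t\otimes\mathrm{d}\P^{\alphab^n}$--a.e.\ precisely when $\alpha^{i,n}_t=\widehat\alpha(t,X^i_t,\mu^n,\aleph^{i,n},\sigma^{-1}nZ^{i,n}_t)$, i.e.\ when $\alphab^n=\alphab^{\star,n}$ (again using uniqueness of the maximiser). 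Hence this process is a $\P^{\alphab^n}$--supermartingale, and a true martingale when $\alphab^n=\alphab^{\star,n}$; the exponential integrability from admissibility together with $Y^n\in\S$, $\Zbb^n\in\H^n$ supplies the uniform integrability needed to pass to expectations. Taking $\Xbb^n_0$--conditional expectations and using $Y^n_T=g(\mu^n,\xi^n(\Xbb^n))$, I would obtain $\E^{\P^{\alphab^n}}[\Rr^{\Cf^n}_n(\alphab^n)\,|\,\Xbb^n_0]\le Y^n_0$, with equality for $\alphab^{\star,n}$; since $Y^n_0$ is $\sigma(\Xbb^n_0)$--measurable and the law of $\Xbb^n_0$ is unaffected by the changes of measure, integrating yields $J^{\Cf^n}_n(\alphab^n)\le\E[Y^n_0]=J^{\Cf^n}_n(\alphab^{\star,n})$ for every $\alphab^n$, i.e.\ $\alphab^{\star,n}\in\Ac^{\star,n}_n(\Cf^n)$, and also $Y^n_0=\E^{\P^{\alphab^{\star,n}}}[\Rr^{\Cf^n}_n(\alphab^{\star,n})\,|\,\Xbb^n_0]$.

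\emph{Main obstacle.} The hard part will be the BSDE well--posedness and the a priori estimates. The generator in \eqref{eq:BSDE1_agent} is genuinely quadratic in $z$, since $\widehat b$ grows linearly and $\widehat L$ quadratically in $z$ under \Cref{assump:growth_coercivity} and \Cref{assump:growth_coercivity1}, but it is diagonal, and the exponential moments of $g(\nu^n,\xi^n(\Ubb^n))$ and of $\aleph^{i,n}$ built into $\Xi_n$ should place it within standard quadratic--BSDE theory; those same estimates are what makes the variational comparison in the first direction and the true--martingale property in the second one rigorous. A second, more bookkeeping, difficulty is keeping the weak and strong formulations aligned through the successive changes of measure, the very issue flagged right after \eqref{eq:N-agents_StrongMV_CommonNoise-law-of-controls}. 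Finally, the standing hypothesis that $\widehat\alpha$ is the \emph{unique} maximiser of \eqref{eq:maximizee} is used in an essential way both at the optimality step of the first direction and at the equality case of the second one.
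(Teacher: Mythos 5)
Your overall architecture (weak formulation via Girsanov, a value process solving a quadratic BSDE with diagonal structure, identification of the optimal control through the pointwise maximizer of the Hamiltonian, and a verification/supermartingale argument for the converse) matches the paper's, and your converse direction is essentially identical to what the paper does. The genuine divergence is in how you extract the identity $\alpha^{i,n}=\widehat\alpha(\cdot)$ from optimality. The paper introduces the \emph{dynamic} value process $V_t$ as an essential supremum over all admissible $\betab\in\Ac_n^n$, invokes the dynamic programming principle and the martingale optimality principle (so that $M^{\betab}$ is a $\P^{\betab}$--supermartingale for every $\betab$ and a $\P^{\alphab}$--martingale), and reads off from the drift of $M^{\betab}$ the inequality $\frac1n\sum_i h(\cdot,\alpha^i)\ge\frac1n\sum_i h(\cdot,\beta^i)$ for \emph{every} admissible $\betab$; since this holds in particular for bounded (e.g.\ constant) controls ranging over a dense subset of $A$, the pointwise maximality and then the uniqueness of the maximizer give the conclusion. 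You instead define $Y^n$ directly along the equilibrium and argue by perturbation: replace $\alpha^{i,n}$ by the candidate maximizer where they differ and invoke strict comparison for quadratic BSDEs.

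The weak point of your perturbation step is admissibility of the perturbed control. The candidate $\widehat\alpha(t,U^i_t,\nu^n,\aleph^{i,n},\sigma(t,U^i_t)^{-1}nZ^{i,n}_t)$ grows linearly in $z$ by \Cref{assump:growth_coercivity}, while $\Zbb^n$ is only known to lie in $\H^n$; the resulting control therefore need not satisfy the exponential integrability defining $\Ac_n^n$, so "contradicting optimality within $\Ac_n^n$" does not follow directly. This is precisely why the paper compares against arbitrary admissible $\betab$ rather than against the candidate optimizer. Your argument can be repaired by localizing (perturb only on $\{|nZ^{i,n}_t|\le K\}$, where the modification is bounded hence admissible, and let $K\to\infty$), or by switching to the paper's device of testing against constant controls; as written, however, the step is incomplete. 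A second, minor caveat: strict comparison for quadratic BSDEs with generators differing on a positive-measure set requires a linearization/Girsanov argument rather than the basic comparison theorem, which you should make explicit.
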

\begin{proof}

\medskip
%As introduced in \eqref{eq:BSDE1_agent} and \eqref{eq:BSDE2_agent}, 
Let us define the processes $(U^i)_{1 \le i \le n}$ as the solution of: $U^i_0=\iota^i$ and $\mathrm{d}U^i_t=\sigma(t,U^i_t) \mathrm{d}W^i_t$. Let $\alphab \in \Ac^{\star,n}_n(\Cf^n)$,  under \Cref{assum:main1}, by using \cite[Corollary 6]{briand2008quadratic}, there exists a unique solution $(\widehat{Y}^n, \widehat{\Zbb}^n)$ satisfying: $(\widehat{Y}^n, \widehat{\Zbb}^n) \in \S \x \H^n$, $\widehat{Y}^n_T=g (\nu^n, \xi^n(\Ubb^n))$
\begin{align} \label{eq:bsde_optimal}
    \mathrm{d} \widehat{Y}^n_t=
    - \frac{1}{n} \sum_{i=1}^n h (t, U^i_t, \nu^n, \aleph^{i,n}(t, \Ubb^n ), n \widehat{Z}^i_t, \alpha^i(t,\Ubb^n)) \mathrm{d}t + \sum_{i=1}^n \widehat{Z}^i_t \mathrm{d}W^i_t\;\mbox{with}\;\nu^n_t=\frac{1}{n} \sum_{i=1}^n \delta_{U^i_t}.
\end{align}
\medskip    
    Let $\boldsymbol{\beta} \in \Ac_n^n$ be an admissible controls. We introduce the probability $\P^{\boldsymbol{\beta}}$ through density process $L^{n,\betab}$ defined by
    \begin{align*}
        \frac{\mathrm{d \P^{\boldsymbol{\beta}}}}{\mathrm{d}\P}:=L^{n,\betab}_T\;\;\mbox{with}\;\;\frac{\mathrm{d}L^{n,\betab}_t}{L^{n,\betab}_t}= \sum_{i=1}^n \phi^i(t,\Ubb^n)\;\mathrm{d}W^i_t
    \end{align*}
    and 
    \begin{align*}
        \phi^i(t,\Ubb^n)
        :=
        \sigma^{-1}(t,U^i_t) b\left(t,U^i_{t}, \nu^n ,\aleph^{i,n}(t,\Ubb^n),\beta^{i,n}(t,\Ubb^n) \right).
    \end{align*}
    Notice that, by using the assumption over $b$ and the fact that $\betab$ is admissible,  by Girsanov Theorem, we have that $W^{\beta,i}_\cdot=W^i_\cdot-\int_0^\cdot \phi^i(s,\Xbb_s) \mathrm{d}s$ is a $\P^{\boldsymbol{\beta}}$--Brownian motion, %$\Lc \left( Y^n, \Zbb^n, \Xbb^n \right)=\Lc^{\P^{\alphab^\star}}(\widehat{Y}^n, \widehat{\Zbb}^n, \Ubb^n)$ 
    and $\Lc \left( \Xbb^{\boldsymbol{\beta},n} \right)=\Lc^{\P^{\boldsymbol{\beta}}}( \Ubb^n)$.
    %\begin{align*}
        %\phi^i(t,\Ubb^n)
        %:=
        %\sigma^{-1}(t,X^i_t) \left[b\left(t,X^i_{t}, \mu^n ,\aleph^{i,n}(t,\Xbb^n),\alpha^{\star,i,n}(t,\Xbb^n) \right) - b\left(t,X^i_{t}, \mu^n ,\aleph^{i,n}(t,\Xbb^n),\beta^{i,n}(t,\Xbb^n) \right) \right].
    %\end{align*}
    %and $\P^{\boldsymbol{\beta}} \circ \left( \Xbb^n \right)^{-1}= \P \circ \left( \Xbb^{\aleph^n,\boldsymbol{\beta}} \right)^{-1}$. 
We introduce the variables $(V_t)_{t \in [0,T]}$ by
\begin{align*}
    V_{t}
    :=
    \esup_{\betab \in \Ac_n^n}\E^{\P^{\betab}} \left[ \frac{1}{n} \sum_{i=1}^n
        \int_t^T 
        L\left(s,U^{i}_s,\nu^n,\aleph^{i,n}(s,\Ubb^n),\beta^{i}(s,\Ubb^n) \right)\;\mathrm{d}s 
        +
        g\left( \nu^n,\xi^n(\Ubb^n) \right) \bigg| \Fc_t
        \right].
\end{align*}
We can check that $\E[V_0]=\sup_{\betab \in \Ac_n^n} J^{\Cf^n}_{n}(\betab^n)$. By some classical dynamic programming principle (see for instance \cite[Theorem 2.1]{karoui2013capacities2}), we can verify that: for any $(\sigma(\Ubb^n_s:\;s\le t))_{t \in [0,T]}$--stopping time $\tau$ verifying $t \le \tau \le T$,
\begin{align*}
    V_t 
    =
    \esup_{\betab \in \Ac_n^n}\E^{\P^{\betab}} \left[ \frac{1}{n} \sum_{i=1}^n
        \int_t^\tau 
        L\left(s,U^{i}_s,\nu^n,\aleph^{i,n}(s,\Ubb^n),\beta^{i}(s,\Ubb^n) \right)\;\mathrm{d}s 
        +
        V_\tau \bigg| \Fc_t
        \right].
\end{align*}
For any admissible $\betab \in \Ac_n^n$, we define the process $(M^{\betab}_t)_{t \in [0,T]}$ by
\begin{align*}
    M^{\betab}_t
    :=
    V_t + \frac{1}{n} \sum_{i=1}^n \int_0^t L\left(s,U^{i}_s,\nu^n,\aleph^{i,n}(s,\Ubb^n),\beta^{i}(s,\Ubb^n) \right)\;\mathrm{d}s.
\end{align*}
We check that $(M^{\betab}_t)_{t \in [0,T]}$ is a $\P^{\betab}$--super martingale for any $\betab$, and by using the fact that $\alphab$ is an equilibrium and the martingale optimality principle, $(M^{\alphab}_t)_{t \in [0,T]}$ is a $\P^{\alphab}$--martingale. We can verify that: for any admissible $\betab \in \Ac_n^n$, $\E^{\P^{\betab}}[\int_0^T |\widehat{Z}^i_t|^2\mathrm{d}t]< \infty$, therefore $\E^{\P^{\betab}}[\int_t^T \widehat{Z}^i_r \mathrm{d}W^{\beta,i}_r|\Fc_t]=0$ and by classical verification arguments  
$$
    \widehat{Y}^n_t=V_t=\E^{\P^{\alphab}} \left[ \frac{1}{n} \sum_{i=1}^n
        \int_t^T 
        L\left(s,U^{i}_s,\nu^n,\aleph^{i,n}(s,\Ubb^n),\alpha^{i}(s,\Ubb^n) \right)\;\mathrm{d}s 
        +
        g\left( \nu^n,\xi^n(\Ubb^n) \right) \bigg| \Fc_t
        \right].
$$
Consequently, for any admissible $\betab \in \Ac_n^n$,
\begin{align*}
    M^{\betab}_t 
    &=
    V_0 - \frac{1}{n} \sum_{i=1}^n \int_0^t h(s,U^i_s,\nu^n,\aleph^{i,n}(s,\Ubb^n), n \widehat{Z}^i_s, \alpha^i(s,\Ubb^n)) \mathrm{d}s 
    \\
    &~~~~~~~~~~+ \frac{1}{n} \sum_{i=1}^n\int_0^t L(s,U^i_s, \nu^n, \aleph^{i,n}(s,\Ubb^n), \beta^i(s,\Ubb^n)) \mathrm{d}s + \sum_{i=1}^n \widehat{Z}^i_s \mathrm{d}W^i_s
    \\
    &=
    V_0 - \frac{1}{n} \sum_{i=1}^n \int_0^t h(s,U^i_s,\nu^n,\aleph^{i,n}(s,\Ubb^n), n \widehat{Z}^i_s, \alpha^i(s,\Ubb^n)) \mathrm{d}s 
    \\
    &~~~~~~~~~~+ \frac{1}{n} \sum_{i=1}^n\int_0^t h(s,U^i_s, \nu^n, \aleph^{i,n}(s,\Ubb^n), n \widehat{Z}^i_s, \beta^i(s,\Ubb^n)) \mathrm{d}s + \sum_{i=1}^n \widehat{Z}^i_s \mathrm{d}W^{\betab,i}_s.
\end{align*}
Since $(M^{\betab}_t)_{t \in [0,T]}$ is a $\P^{\betab}$--super martingale for any $\betab$, we have necessarily that $\mathrm{d}s \otimes \mathrm{d}\P^{\betab}$--a.e.
\begin{align*}
     \frac{1}{n} \sum_{i=1}^n h(s,U^i_s,\nu^n,\aleph^{i,n}(s,\Ubb^n), n \widehat{Z}^i_s, \alpha^i(s,\Ubb^n)) \ge \frac{1}{n} \sum_{i=1}^n h(s,U^i_s, \nu^n, \aleph^{i,n}(s,\Ubb^n), n \widehat{Z}^i_s, \beta^i(s,\Ubb^n)).
\end{align*}
The probabilities $(\mathrm{d}s \otimes \mathrm{d}\P^{\betab})_{\betab \in \Ac_n^n}$ are equivalent, then the previous result is true $\mathrm{d}s \otimes \mathrm{d}\P$--a.e. for any $\betab$. We can conclue that for any $i$, a.e. $s \in [0,T]$, any $\betab \in \Ac_n^n$,
\begin{align*}
     h(s,U^i_s,\nu^n,\aleph^{i,n}(s,\Ubb^n), n \widehat{Z}^i_s, \alpha^i(s,\Ubb^n)) \ge h(s,U^i_s, \nu^n, \aleph^{i,n}(s,\Ubb^n), n \widehat{Z}^i_s, \beta^i(s,\Ubb^n)).
\end{align*}
By uniqueness of the maximizer of the Hamiltonian $h$, we deduce that 
$$
    \alpha^i(t,\Ubb^n)=\widehat{\alpha}(t,U^i_t,\nu^n,\aleph^{i,n}(t,\Ubb^n), \sigma(t,U^i_t)^{-1} n \widehat{Z}^i_t).
$$
We easily check that $V_0=\widehat{Y}^n_0=\E\left[\Rr^{\Cf^n}_n(\alphab^n) \big| \Xbb^n_0 \right]$. To conclude, it is enough to notice that the well defined distribution $\Lc^{\P^{\alphab}}(\widehat{Y}^n, \widehat{\Zbb}^n, \Ubb^n)$ is equal to the distribution of $\Lc \left( Y^n, \Zbb^n, \Xbb^n \right)$. The second part of the Proposition follows by similar arguments as before. 
    
    %Given any other equilibrium $\betab^\star$, by similar arguments than before, we obtain that
    %\begin{align*}
        %J^{\Cf^n}_{n}(\betab^{\star,n})&=\E^{\P^{\betab^{\star,n}}} \bigg[ \int_0^T \;\;\frac{1}{n} \sum_{i=1}^n h\left(t,U^{i}_t,{\varphi}^n_t[\betab^{\star,n}],\aleph^{i,n}(t,\Ubb^n), n \widehat{Z}^{i,n}_t,\beta^{\star,i}(t,\Ubb^n) \right)\; 
        %\\
        %&~~~~~~~~~~~~~~~~~~-\frac{1}{n} \sum_{i=1}^n H\left(t,U^{i}_t, \nu^n_t,\aleph^{i,n}(t,\Ubb^n),n \widehat{Z}^{i,n}_t \right)\;\;\mathrm{d}t+ \widehat{Y}^n_0\bigg].
    %\end{align*}
    %By using the fact that $\betab^{\star,n}$ is an equilibrium as $\alphab^{\star,n}$, and the definition of $h$ in ??, we can get that: $\mathrm{d}t \otimes\mathrm{d}\P^{\betab^{\star,n}}\mbox{--a.e.}$
    %\begin{align*}  h\left(t,U^{i}_t,{\varphi}^n_t[\betab^{\star,n}],\aleph^{i,n}(t,\Ubb^n), n \widehat{Z}^{i,n}_t,\beta^{\star,i}(t,\Ubb^n) \right)\; 
        %- H\left(t,U^{i}_t,\nu^n_t,\aleph^{i,n}(t,\Ubb^n),n \widehat{Z}^{i,n}_t \right)=0
    %\end{align*}
    %Since, we know that all the measures $(\P^{\betab})_{\betab \in \Ac_n^n}$ are equivalent, this is enough to conclude the second point of the proposition.
\end{proof}

\paragraph*{Characterization of the contracts through SDEs} Now, we give some characterization of the contracts. We recall that $\widehat{g}_{\Pr}(\pi,y)=g_{\Pr}(\pi, g^{-1}(\pi,y)),\;\mbox{with}\;g^{-1}(\pi,\cdot)$ denotes the inverse of $g(\pi,\cdot)$ for any $\pi \in C([0,T];\Pc_p(\R)).$ 

\medskip
Given the reservation utility $R$, we will say the process $\If^n:=(\alephbb^n,\Zbb^n) \in \H^n_{\exp} \x \H^n$ belongs to $\Sc_n$ if there exits $(Y^{n,\If^n},\Xbb^{n,\If^n}):=(Y^n,\Xbb^n)$ s.t. $\Zbb^n$ is an $(\sigma(\Xbb^n_{t \wedge \cdot}))_{t \le T}$--progressively measurable, $Y^n\in \S$,  and $\alphab^n \in \Ac_n^n$ is admissible where 
$$
    \alpha^{i,n}(t,\Xbb^n)=\widehat{\alpha}(t,X^i_t,\mu^n,\aleph^{i,n}(t,\Xbb^n), \sigma(t,X^i_t)^{-1}n Z^{i,n}_t)
$$
and $\xi^n(\Xbb^n)=g^{-1}(\mu^n,Y^n_T)$ with for all $t \in [0,T],$ $Y^n_0 \ge  R$ a.e.,
\begin{align*}
    \mathrm{d}Y^n_t
    =
    -
    \frac{1}{n} \sum_{i=1}^n \widehat{L}\left(t,X^{i,n}_t,\mu^n,\aleph^{i,n}(t,\Xbb^n),n Z^{i,n}_t \right)\;\mathrm{d}t 
    + \sum_{i=1}^n Z^{i,n}_t \mathrm{d}W^i_t
\end{align*}
and for each $1 \le i \le n$, $\Xbb^n:=\Xbb^{\alphab^n}$ i.e. $X^{i,n}_0 = \iota^i$,
\begin{align*}
   \mathrm{d}X^{i,n}_t =  \widehat{b}\left(t,X^{i,n}_{t}, \mu^n ,\aleph^{i,n}(t,\Xbb^n), n Z^i_t \right) \;\mathrm{d}t 
        +
        \sigma(t,X^{i,n}_t) \mathrm{d}W^i_t.
\end{align*}
Notice that, for any $\If^n \in \Sc_n$, $Y^{n,\If^n}$ is $(\sigma(\Xbb^{n,\If^n}_s:s\le t))_{t \in [0,T]}$--adapted. Consequently, the couple $(\xi^n, \alephbb^n)$ can be seen as a contract where $\xi^n(\Xbb^{n,\If^n})=g^{-1}\left(\mu^n,Y^{n,\If^n}_T \right)$. We introduce 
\begin{align*}
    \widehat{J}_{n,\Pr} (\If^n)
    := 
        \E \left[ U \left( \frac{1}{n} \sum_{i=1}^n \;\;\Upsilon \left(X^{i,\If^n}_T\right)
    -\widehat{g}_{\Pr} \left(\mu^n,Y^{n,\If^n}_T \right)
    -
    \int_0^T L_{\Pr} \left(t,\aleph^{i,n}(t,\Xbb^{n,\If^n}) \right)\;\mathrm{d}t\right) \right]
\end{align*}
and
\begin{align*}
    \widehat{V}_{n,\Pr} := \sup_{\If^n=(\alephbb^n,\Zbb^n) \in \Sc_n}  \widehat{J}_{n,\Pr} (\If^n).
\end{align*}

\begin{proposition} \label{prop:reformulation_principal}
    Under {\rm\Cref{assum:main1}}, {\rm \Cref{assump:growth_coercivity}} and {\rm \Cref{assump:growth_coercivity1}}, $V_{n,\Pr}
        =
        \widehat{V}_{n,\Pr}.$
\end{proposition}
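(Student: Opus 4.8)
The plan is to establish two inequalities, each obtained by passing between a contract $\Cf^n = (\xi^n,\alephbb^n)$ together with an equilibrium control satisfying the reservation utility, and a process $\If^n = (\alephbb^n,\Zbb^n) \in \Sc_n$. The bridge in both directions is \Cref{prop:characterization_agent}, which tells us that for an admissible contract the agent's equilibrium value (conditioned on $\Xbb_0$) is exactly $Y^n_0$, where $(Y^n,\Zbb^n,\Xbb^n)$ solves the coupled BSDE--SDE system \eqref{eq:BSDE1_agent}--\eqref{eq:BSDE2_agent}, and that the equilibrium control is the Hamiltonian maximizer $\widehat\alpha$ evaluated along $\Zbb^n$. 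The key algebraic observation making both sides match is that, along the optimal control, the running reward $\Rr^{\Cf^n}_n$ and the BSDE driver coincide: $\widehat L(t,X^i_t,\mu^n,\aleph^{i,n},nZ^{i,n}_t) = L(t,X^i_t,\mu^n,\aleph^{i,n},\widehat\alpha(\cdots))$, and that $g_{\Pr}(\mu^n,\xi^n(\Xbb^n)) = \widehat g_{\Pr}(\mu^n, Y^n_T)$ precisely because $\xi^n(\Xbb^n) = g^{-1}(\mu^n, Y^n_T)$ and $\widehat g_{\Pr}(\pi,y) = g_{\Pr}(\pi, g^{-1}(\pi,y))$. So the two principal functionals $J_{n,\Pr}^{\alphab^n}(\Cf^n)$ and $\widehat J_{n,\Pr}(\If^n)$ agree on matched pairs.

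For the inequality $V_{n,\Pr} \le \widehat V_{n,\Pr}$: take any $\Cf^n \in \Xi_n$ and any $\alphab^n \in \mathrm{PE}[\Cf^n]$. By \Cref{prop:characterization_agent} the system \eqref{eq:BSDE1_agent}--\eqref{eq:BSDE2_agent} is well-defined, $\alpha^{i,n} = \widehat\alpha(t,X^i_t,\mu^n,\aleph^{i,n},\sigma^{-1}nZ^{i,n}_t)$, and $Y^n_0 = \E[\Rr^{\Cf^n}_n(\alphab^n)\mid \Xbb^n_0] \ge R$ a.e. (the reservation constraint). Writing $\Zbb^n$ for the component produced by the BSDE, one checks $\If^n := (\alephbb^n,\Zbb^n) \in \Sc_n$: the integrability $\alephbb^n \in \H^n_{\exp}$ comes from the contract admissibility condition $\E[\exp(\sum_i b\int_0^T|\aleph^{i,n}|^2)]<\infty$, the bound $\Zbb^n \in \H^n$ and $Y^n \in \S$ from the quadratic-BSDE estimates of \cite{briand2008quadratic} under \Cref{assum:main1}, and admissibility of $\alphab^n$ is given. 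Then $J_{n,\Pr}^{\alphab^n}(\Cf^n) = \widehat J_{n,\Pr}(\If^n) \le \widehat V_{n,\Pr}$, and taking the supremum over $\Cf^n$ and $\alphab^n$ gives the bound. Conversely, for $\widehat V_{n,\Pr} \le V_{n,\Pr}$: given $\If^n = (\alephbb^n,\Zbb^n) \in \Sc_n$, define $\xi^n(\Xbb^n) := g^{-1}(\mu^n,Y^n_T)$, which together with $\alephbb^n$ forms a contract $\Cf^n$ — one must verify the exponential-moment condition defining $\Xi_n$, using $Y^n \in \S$ (hence $\E[\exp(a|g(\nu^n,\xi^n)|)] = \E[\exp(a|Y^n_T|)]<\infty$) and $\alephbb^n \in \H^n_{\exp}$. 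By the second half of \Cref{prop:characterization_agent}, since $\alphab^{\star,n}$ (the $\widehat\alpha$ along $\Zbb^n$) is admissible by hypothesis, it is an equilibrium given $\Cf^n$, and $\E[\Rr^{\Cf^n}_n(\alphab^{\star,n})\mid \Xbb^n_0] = Y^n_0 \ge R$, so $\alphab^{\star,n} \in \mathrm{PE}[\Cf^n]$. Matching the functionals again yields $\widehat J_{n,\Pr}(\If^n) = J_{n,\Pr}^{\alphab^{\star,n}}(\Cf^n) \le V_{n,\Pr}$; take the supremum over $\If^n$.

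The main obstacle I anticipate is not the equilibrium identification — that is handed to us by \Cref{prop:characterization_agent} — but the bookkeeping that makes the correspondence $\Cf^n \leftrightarrow \If^n$ genuinely a bijection onto the right classes, i.e. checking that the integrability conditions transfer correctly in both directions (contract admissibility $\Leftrightarrow$ the defining conditions of $\Sc_n$), and handling the fact that when $\alphab^n \in \mathrm{PE}[\Cf^n]$ is merely a selected equilibrium among possibly many, the BSDE solution is nevertheless unique and its $Y^n_0$ is exactly the conditional reservation value, so no information is lost. A secondary technical point is that $\Zbb^n$ must be $(\sigma(\Xbb^n_{t\wedge\cdot}))_{t\le T}$-progressively measurable — this is part of the statement in \Cref{prop:characterization_agent} (the triple $(Y^n,\Zbb^n,\Wbb^n)$ is adapted to the filtration generated by $\Xbb^n$), so it carries over, but it should be explicitly invoked when claiming $\If^n \in \Sc_n$. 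Once these measurability and integrability checks are in place, the equality $V_{n,\Pr} = \widehat V_{n,\Pr}$ follows immediately from the two supremum comparisons.
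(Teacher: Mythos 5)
Your proposal is correct and follows essentially the same route as the paper: both directions are obtained by passing between a contract with a selected equilibrium and an element of $\Sc_n$ via \Cref{prop:characterization_agent}, with the identification $\xi^n(\Xbb^n)=g^{-1}(\mu^n,Y^n_T)$ and the matching of the two principal functionals. The paper's proof is simply terser on the measurability/integrability bookkeeping that you spell out.
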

\begin{proof}
    The proof of this result is inspired by \cite[Theorem 4.2.]{cvitanic2015dynamic}. For any $\If^n \in \Sc_n$, we can check the required measurability and integrability to conclude that  $(\xi^n, \alephbb^n)$ is a contract where $\xi^n(\Xbb^{n,\If^n})=g^{-1}(\mu^n,Y^{n,\If^n}_T)$. Since the admissibility of  $\alphab^n$ is true by construction, by \Cref{prop:characterization_agent}, $\alphab^n$ belongs to ${\rm PE}[(\xi^n, \alephbb^n)]$. Consequently, we have $V_{n,\Pr}
        \ge 
        \widehat{J}_{n,\Pr} (\If^n).$ Conversely, let $\Cf^n=(\xi^n,\alephbb^n)$ be a contract associated to an equilibrium $\alphab^n \in {\rm PE}[\Cf^n]$, by using \Cref{prop:characterization_agent}, there exists $\If^n \in \Sc_n$ s.t. $\Xbb^{\alphab^n}=\Xbb^{n,\If^n}$, $\xi^n(\Xbb^{\alphab^n})=g^{-1}(\mu^n,Y^{n,\If^n}_T)$ and the optimal control is given by $\alpha^{i,n}(t,\Xbb^{\alphab^n})=\widehat{\alpha}(t,X^{i,\If^n}_t,\mu^n,\aleph^{i,n}(t,X^{i,\If^n}), \sigma(t,X^{i,\If^n}_t)^{-1} n Z^{i,n}_t)$. As a result, we have that $J_{n,\Pr}^{{\color{black}\alphab^n}}(\Cf^n) \le \widehat{V}_{n,\Pr}$. We can then conclude the proof of the proposition.

        %we can replicate the contract $\Cf^n$ by contract in $\widehat{\Xi}_n$, then $J_{n,\Pr}^{{\color{black}\alphab^n}}(\Cf^n) \le \widehat{V}_{n,\Pr}$. This allows us to deduce the Proposition.
\end{proof}

\paragraph*{Convergence of the problem of the principal when $n$ goes to infinity} %Let us assume first that for any $ \If^n=(\alephbb^n,\Zbb^n)$,
As we have shown in \Cref{prop:reformulation_principal}, with the reservation utility $R$ fixed, the problem of the principal is
\begin{align*}
    \widehat{V}_{n,\Pr} = \sup_{\If^n=(\alephbb^n,\Zbb^n) \in \Sc_n} \widehat{J}_{n,\Pr} (\If^n).
\end{align*}

In order to be able to take $n$ to infinity, we need to use the coercivity assumptions over our maps mentioned in \Cref{assump:growth_coercivity1}.
Under these assumptions, we have the following convergence result of the problem of the Principal to a McKean--Vlasov control problem. %We recall that we denote by $\Ac$ the set of $A$--valued $\F$--progressively Borel measurable
\begin{proposition} \label{prop:conv-principal}
    Under {\rm\Cref{assum:main1}}, {\rm \Cref{assump:growth_coercivity}} and {\rm \Cref{assump:growth_coercivity1}}, we have 
    \begin{align*}
        \lim_{n \to \infty} \sup_{\If^n=(\alephbb^n,\Zbb^n) \in \Sc_n} \widehat{J}_{n,\Pr} (\If^n) = U (\widehat{V})\;\;\mbox{where}\;\;\widehat{V}\;\mbox{ \rm is defined in \eqref{eq:limit_value}}.
    \end{align*}
\end{proposition}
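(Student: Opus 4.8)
The plan is to prove the two inequalities $\liminf_{n\to\infty}\widehat V_{n,\Pr}\ge U(\widehat V)$ and $\limsup_{n\to\infty}\widehat V_{n,\Pr}\le U(\widehat V)$ separately, where $\widehat V_{n,\Pr}:=\sup_{\If^n\in\Sc_n}\widehat J_{n,\Pr}(\If^n)$. The common mechanism is that the system defining $\Sc_n$ is an $n$--particle McKean--Vlasov system with symmetric feedback coefficients, so the empirical measure $\mu^n$ --- and, more generally, the empirical distribution of the trajectories together with the control variables --- concentrates as $n\to\infty$; since $U$ is applied to a random average, its concavity (\Cref{assump:growth_coercivity1}) lets the nonlinearity $U$ factor outside the expectation in the limit.

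For the lower bound, fix $\eps>0$ and $(\gamma,\aleph)\in\Ac$ with $\widehat J(\gamma,\aleph)\ge\widehat V-\eps$; by a routine density argument one may additionally assume the exponential integrability \eqref{eq:opti_control_cond}. Truncating, $\gamma^\ell:=\gamma\wedge\ell$, $\aleph^\ell:=\aleph\wedge\ell$, I build the associated $n$--particle feedback system $\Xbb^{\ell,n}$ driven by $\widehat b$, with empirical measure $\mu^{\ell,n}$ and the corresponding $Y^{\ell,n},\Zbb^{\ell,n}$; one checks that this yields an element $\If^{\ell,n}\in\Sc_n$. Classical propagation of chaos for this McKean--Vlasov system gives $\mu^{\ell,n}\to\mu^{\gamma^\ell,\aleph^\ell}$ and, since $Y^{\ell,n}_T$ is itself an empirical average whose martingale part is $O(n^{-1/2})$ in $L^2$, $Y^{\ell,n}_T\to Y^{\gamma^\ell,\aleph^\ell}_T$; as the argument of $U$ in $\widehat J_{n,\Pr}(\If^{\ell,n})$ is an empirical average of asymptotically i.i.d.\ terms, it converges in probability to the constant $\widehat J(\gamma^\ell,\aleph^\ell)$, and continuity of $U,\widehat g_{\Pr},\Upsilon,L_{\Pr}$ with the uniform integrability from \eqref{eq:opti_control_cond} and the growth bounds give $\widehat J_{n,\Pr}(\If^{\ell,n})\to U(\widehat J(\gamma^\ell,\aleph^\ell))$. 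Letting $\ell\to\infty$, dominated convergence gives $\widehat J(\gamma^\ell,\aleph^\ell)\to\widehat J(\gamma,\aleph)$, and monotonicity of $U$ yields $\liminf_n\widehat V_{n,\Pr}\ge U(\widehat V-\eps)$; send $\eps\downarrow0$.

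For the upper bound, take $\If^n=(\alephbb^n,\Zbb^n)\in\Sc_n$ with $\widehat J_{n,\Pr}(\If^n)\ge\widehat V_{n,\Pr}-1/n$. The crucial preliminary step is an a priori energy estimate: writing $Y^n_T$ from its dynamics, inserting the lower coercivity bounds on $\widehat L,L_{\Pr},\widehat g_{\Pr}$ from \Cref{assump:growth_coercivity1} into $\widehat J_{n,\Pr}(\If^n)$, and using that the martingale part of $Y^n$ has zero expectation, one bounds $\widehat J_{n,\Pr}(\If^n)$ from above by a constant minus a positive multiple of $\E\big[\tfrac1n\sum_{i=1}^n\int_0^T(|\aleph^{i,n}_t|^2+|nZ^{i,n}_t|^2)\,\mathrm{d}t\big]$, up to moments of $\mu^n$ and of $\sup_t|X^{i,n}_t|$; together with the standard moment estimates for $X^{i,n}$ coming from the growth of $\widehat b$, a Gr\"onwall/bootstrap argument produces uniform bounds on $\widehat V_{n,\Pr}$, on that energy, and on $\E[\tfrac1n\sum_i\sup_t|X^{i,n}_t|^2]$. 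These bounds make the laws of the random empirical measures $\tfrac1n\sum_i\delta_{(X^{i,n},\,\aleph^{i,n}(\cdot,\Xbb^n),\,nZ^{i,n}(\cdot))}$ --- with the control variables relaxed into measure--valued processes --- tight; along a subsequence they converge to the law of a limit object which, by the martingale problem stability arguments of \cite{djete2019general}, is an admissible (relaxed) control for the McKean--Vlasov problem of $\widehat V$, the constraint $Y_0\ge R$ being preserved. Applying Jensen's inequality, $\widehat J_{n,\Pr}(\If^n)=\E[U(A_n)]\le U(\E[A_n])$ with $A_n$ the random argument of $U$; weak convergence, continuity, and the lower coercivity of the coefficients (Fatou) give $\limsup_n\E[A_n]\le\widehat J_\star$, the objective at the limiting control, hence $\limsup_n\widehat J_{n,\Pr}(\If^n)\le U(\widehat J_\star)$. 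A mimicking / Markovian projection step (the one used to set up $\widehat V$ over feedback controls, cf.\ \cite{djete2019general}) shows $\widehat J_\star\le\widehat V$, so $\limsup_n\widehat V_{n,\Pr}\le U(\widehat V)$.

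The main obstacle is the upper bound, and two points stand out within it. First, extracting the energy estimate is delicate because of the law dependence and the nonlinearities $g^{-1},\widehat g_{\Pr}$: the bound on $\E[\tfrac1n\sum_i\int_0^T|nZ^{i,n}_t|^2\mathrm{d}t]$ must not circularly rely on moments of $\mu^n$ that themselves need it, whence the bootstrap. Second --- and this is precisely where cooperation genuinely complicates matters compared with the Nash setting of \cite{possamai2019contracting} --- identifying the weak limit as an admissible control of $\widehat V$ and reducing the relaxed limit to a strict feedback control must be carried out with all particles sharing the limiting control law, which is what the techniques of \cite{djete2019general} are designed for. The lower bound is, by contrast, a fairly routine propagation of chaos argument once the explicit $n$--particle construction is in place.
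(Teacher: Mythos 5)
Your proposal is correct and follows essentially the same route as the paper: the lower bound via truncation of a near-optimal $(\gamma,\aleph)$ and propagation of chaos for the resulting $n$--particle feedback system, and the upper bound via the coercivity-based energy estimate (the paper's \Cref{prop:coercivity}), tightness of the empirical measures with relaxed controls, identification of the limit points, and reduction to feedback controls (\Cref{prop:relaxed_appr}--\Cref{prop:markovian_appr}). The only notable variation is in handling the outer utility for the upper bound: you invoke Jensen's inequality on $U$ to pull the expectation inside, whereas the paper uses the conditional law of large numbers given the limiting common randomness $(\widehat{\mu},Y_0)$ together with the mixture representation $\lim_j\int_0^1 U(\widehat{J}(\gamma^{u,j},\aleph^{u,j}))\,\mathrm{d}u\le U(\widehat{V})$ and monotonicity of $U$ — both are valid under \Cref{assump:growth_coercivity1}.
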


\begin{proof}
    This result is essentially an application of \Cref{prop:relaxed_appr}, \Cref{prop:weak_appr} and \Cref{prop:markovian_appr} which are extensions of results in \cite{MFD-2020,MFD-2020-closed} in the case of unbounded coefficients and non--compact set valued of controls. Let $(\delta_n)_{n \ge 1}$ be a sequence of non--negative numbers satisfying $\lim_{n \to \infty} \delta_n=0$. We choose $\If^n \in \Sc_n$ s.t.
    \begin{align*}
         \widehat{J}_{n,\Pr} (\If^n) \ge \sup_{\If'^n=(\alephbb'^n,\Zbb'^n) \in \Sc_n} \widehat{J}_{n,\Pr} (\If'^n) - \delta_n.
    \end{align*}
    By \Cref{prop:coercivity}, we have that $ \sup_{n \ge 1} \E \left[ |Y^n_0| + \frac{1}{n} \sum_{i=1}^n \int_0^T |n\;Z^{i,n}_t|^{2} + |\aleph^{i,n}(t,\Xbb^{n,\If^n})|^{2} \mathrm{d}t \right] < \infty.$ Therefore
    \begin{align*}
        \lim_{n \to \infty} \sup_{n \ge 1} \E \left[ \sum_{i=1}^n \int_0^T|Z^{i,n}_t|^2 \mathrm{d}t \right]=0\;\mbox{leading to}\;\lim_{n \to \infty} \sup_{n \ge 1} \E \left[\left| \sum_{i=1}^n \int_0^TZ^{i,n}_t \mathrm{d}W^i_t \right|^2\right]=0.
    \end{align*}
    %Notice that since $\widehat{g}_{\Pr}(\mu,\cdot)$ is non--decreasing, we can take $Y^0_0=R$.
    By combining \Cref{prop:relaxed_appr}, \Cref{prop:weak_appr} and \Cref{prop:markovian_appr}, the sequence $(\widehat{\Qr}^n)_{n \ge 1}$ is relatively compact for the weak convergence topology and bounded in $\Wc_1$ and, the sequence $(\Qr^n)_{n \ge 1}$ is relatively compact in $\Wc_r$ with $1 \le r < 2$ and bounded in $\Wc_2$ where $\widehat{\Qr}^n
    :=
    \frac{1}{n} \sum_{i=1}^n \P \circ \left( X^{i,\If^n}, W^i,\Lambda^i, \muh^n, Y^n_0 \right)^{-1}$ and $\Qr^n
    :=
    \frac{1}{n} \sum_{i=1}^n \P \circ \left( X^{i,\If^n}, W^i,\Lambda^i, \muh^n \right)^{-1},\;\Lambda^i:=\delta_{ (n Z^{i,n}_t, \aleph^{i,n}(t,\Xbb^{n,\If^n}))}(\mathrm{d}z,\mathrm{d}e)\mathrm{d}t,\;\muh^n:=\frac{1}{n}\sum_{i=1}^n \delta_{(X^{i,\If^n},W^i,\Lambda^i)}$. In addition, for any limit point $\widehat{\Qr}=\P \circ (X,W,\Lambda,\muh,Y_0)^{-1}$, there exists a sequence of bounded Lipschitz maps $(\gamma^{u,j},\aleph^{u,j})_{j \ge 1,u \in [0,1]} \subset \Ac$ s.t. $u \mapsto \widehat{J}(\gamma^{u,j},\aleph^{u,j})$ is Borel measurable and 
    \begin{align*}
        \lim_{j \to \infty} \int_0^1 U \left( \widehat{J}(\gamma^{u,j},\aleph^{u,j}) \right) \mathrm{d}u = \E \left[U \left( \E \left[  \Upsilon \left(X_T\right)
    -\widehat{g}_{\Pr} \left(\mu,Y_T \right)
    -
    \int_0^T \int_{\R \x \Er} L_{\Pr} \left(t,e \right)\;\Lambda_t(\mathrm{d}z,\mathrm{d}e)\mathrm{d}t \Big|\muh, Y_0
    \right] \right) \right]
    \end{align*}  
    where $Y_\cdot=Y_0 - \E \left[\int_0^\cdot \widehat{L} \left(t, X_t, \mu, e,z  \right) \Lambda_t(\mathrm{d}z,\mathrm{d}e) \mathrm{d}t \Big| \muh, Y_0\right]$ with $Y_0 \ge R$ a.e. Consequently, we have
    \begin{align*}
        &\limsup_{n \to \infty} \sup_{\If'^n=(\alephbb'^n,\Zbb'^n) \in \Sc_n} \widehat{J}_{n,\Pr} (\If'^n)
        \\
        &\le \limsup_{n \to \infty} \widehat{J}_{n,\Pr} (\If^n)
        \le \E \left[U \left( \E \left[  \Upsilon \left(X_T\right)
    -\widehat{g}_{\Pr} \left(\mu,Y_T \right)
    -
    \int_0^T \int_{\R \x \Er} L_{\Pr} \left(t,e \right)\;\Lambda_t(\mathrm{d}z,\mathrm{d}e)\mathrm{d}t \Big|\muh, Y_0
    \right] \right) \right]
    \\
    & = \lim_{j \to \infty} \int_0^1 U \left( \widehat{J}(\gamma^{u,j},\aleph^{u,j}) \right) \mathrm{d}u
    \le U (\widehat{V})
    \end{align*}
    where in the last inequality, we used the fact that $U$ is non--decreasing.
    By taking a bounded Lipschitz $(\gamma,\aleph) \in \Ac$, there exists (see for instance \cite[Theorem 2.5.]{Lacker-strong-2018}) $\Xbb^n=(X^{1,n},\cdots,X^{n,n})$ verifying: $X^{i,n}_0=\iota^i$,
    \begin{align*}
        \mathrm{d}X^{i,n}_t
        =\widehat{b} \left(t, X^{i,n}_t, \mu^n, \aleph(t,X^{i,n}_t), \gamma(t,X^{i,n}_t) \right) \mathrm{d}t + \sigma(t,X^{i,n}_t) \mathrm{d}W^i_t,\;\mu^n_t:=\frac{1}{n} \sum_{i=1}^n \delta_{X^{i,n}_t}
    \end{align*}
    with $\;\mut^n_t:=\frac{1}{n} \sum_{i=1}^n \delta_{\left(X^{i,n}_t, \aleph(t,X^{i,n}_t), \gamma(t,X^{i,n}_t) \right)}$ and
    \begin{align*}
        \lim_{n \to \infty} \Lc\left(\mu^n, \delta_{\tilde \mu^n_t}(\mathrm{d}m) \mathrm{d}t \right)=\Lc\left(\mu,\delta_{\tilde \mu_t}(\mathrm{d}m)\mathrm{d}t \right)\;\mbox{in}\;\Wc_2,\;\mut_t:=\Lc\left(X_t, \aleph(t,X_t), \gamma(t,X_t) \right).
    \end{align*}
    By setting $\If^n:=\left( \aleph(t,X^{i,n}_t), \gamma(t,X^{i,n}_t) \right)_{t \in [0,T]} \in \Sc_n$, we have 
    \begin{align*}
         U \left( \widehat{J}(\gamma,\aleph) \right)= \lim_{n \to \infty}  \widehat{J}_{n,\Pr} (\If^n) \le \liminf_{n \to \infty} \sup_{\If'^n=(\alephbb'^n,\Zbb'^n) \in \Sc_n} \widehat{J}_{n,\Pr} (\If'^n).
    \end{align*}
    By notice that the supremum of $\widehat{V}$ can be taken over a set of bounded Lipschitz admissible controls (see the idea of \Cref{prop:markovian_appr}) and the fact that $\sup_{(\gamma,\aleph) \in \Ac}U \left( \widehat{J}(\gamma,\aleph) \right)=U(\widehat{V})$ (recall that $U$ is non--decreasing), we can deduce the result. 
\end{proof}

\medskip
\paragraph*{From the limit problem to approximate solution of the Principal with $n$--agent problem}
Now, let $(\gamma,\aleph)$ be an admissible control for the problem $\widehat{V}$ s.t.
\begin{align*}
    \E \left[\exp \left(\int_0^T a |\aleph(t,U_t)|^2 + b |\gamma(t,U_t)|^2 \mathrm{d}t \right) \right]< \infty,\;\;\mbox{for each }a,b \ge 0
\end{align*}
%\begin{align*}
    %\sup_{\mu \in \Pc_p(C([0,T];\R))} \E \left[ |L^\mu_T|^{q} \right] < \infty\;\;\mbox{with}\;\;L^\mu_0=1,\;\mathrm{d}L^\mu_t=L^\mu_t \sigma(t,U_t)^{-1} \widehat{b}\left(t, U_t, \mu_t,\aleph(t,U_t), \gamma(t,U_t) \right) \mathrm{d}W_t,\;
%\end{align*}
with $U_0=\iota$ and $\mathrm{d}U_t=\sigma(t,U_t)\mathrm{d}W_t$. We introduce an $n$--particle system. Let $\Xbb^{\ell,n}:=\left(X^{\ell,1},\cdots,X^{\ell,n} \right)$ be the solution of: $\muh^{\ell,n}_t=\frac{1}{n} \sum_{i=1}^n \delta_{(X^{\ell,i}_t,\aleph^\ell(t,X^{\ell,i}_t),\gamma^\ell(t,X^{\ell,i}_t))},$
\begin{align*}
    \mathrm{d}X^{\ell,i}_t =  \widehat{b}\left(t,X^{\ell,i}_{t}, \mu^{\ell,n},\aleph^\ell(t,X^{\ell,i}_t),\gamma^\ell(t,X^{\ell,i}_t)  \right) \;\mathrm{d}t 
    +
    \sigma(t,X^{\ell,i}_t) \mathrm{d}W^i_t\;\;\mbox{with}\;\;\gamma^\ell(\cdot):=\gamma(\cdot) \wedge \ell,\;\aleph^\ell(\cdot):=\aleph(\cdot) \wedge \ell
\end{align*}
and, we define with $Y^{\aleph,\gamma}_0 \ge R$
\begin{align*}
    &Y^{\ell,n}_\cdot
    :=
    Y^{\aleph,\gamma}_0 -\frac{1}{n} \sum_{i=1}^n\int_0^\cdot H\left(t,X^{\ell,i}_t,\mu^{\ell,n},\aleph^\ell(t,X^{\ell,i}_t),\gamma^\ell(t,X^{\ell,i}_t) \right)\;\mathrm{d}t + \int_0^\cdot \gamma^\ell(t,X^{\ell,i}_t) \sigma(t,X^{\ell,i}_t)^{-1} \mathrm{d}X^{\ell,i}_t,
    \\
    &\;\;\xi^{\ell,n}(\Xbb^{\ell,n}):=g^{-1}\left(\mu^{\ell,n},Y^{\ell,n}_T\right),\;\;\aleph^{\ell,i,n}(t,\Xbb^{\ell,n}):=\aleph^\ell(t,X^{\ell,i}_t)\;\;\mbox{and}\;\;Z^{\ell,i}_t=\frac{\gamma^\ell(t,X^{\ell,i}_t)}{n} .
\end{align*}
We also introduce the control $\alphab^{\ell,n}=(\alpha^{\ell,1,n},\cdots, \alpha^{\ell,n,n})$ by
\begin{align*}
    \alpha^{\ell,i,n}(t,\Xbb^{\ell,n}):= \widehat{\alpha}\left(t,X^{\ell,i}_t,\mu^{\ell,n},\aleph^{\ell,i,n}(t,\Xbb^{\ell,n}_t),\sigma(t,X^{\ell,i}_t)^{-1}\gamma^\ell(t,X^{\ell,i}_t) \right).
\end{align*}
The next Proposition shows that the sequence of contracts we constructed are admissible, the sequence of controls mentioned are optimal for these contracts, and both these sequences converge in Wasserstein to a distribution associated to $(\gamma,\aleph)$. The corollary that comes after just mentions the fact that the sequence of contracts are approximately optimal when $(\gamma,\aleph)$ solves $\widehat{V}$.
\begin{proposition} \label{prop:approx_cong_PA}
    Under {\rm\Cref{assum:main1}}, {\rm \Cref{assump:growth_coercivity}} and {\rm \Cref{assump:growth_coercivity1}},  for each $\ell, n \ge 1$, $\Cf^{\ell,n}=(\xi^{\ell,n},\alephbb^{\ell,n})$ is a contract i.e. $\Cf^{\ell,n} \in \Xi_n$, $\alphab^{\ell,n}$ is an optimal control for the $n$--agent problem i.e. $\alphab^{\ell,n} \in {\rm PE}[\Cf^{\ell,n}]$ and 
    \begin{align*}
        \lim_{\ell \to \infty} \lim_{n \to \infty} \Lc\left( \mu^{\ell,n}, Y^{\ell,n}, \delta_{\hat \mu^{\ell,n}_t}(\mathrm{d}m)\mathrm{d}t \right) = \Lc( \mu, Y^{\gamma, \aleph}, \delta_{\hat \mu_t}(\mathrm{d}m)\mathrm{d}t )\;\mbox{in}\;\Wc_1.
    \end{align*}
    Consequently, $\lim_{\ell \to \infty}\lim_{n \to \infty} \widehat{J}_{n,\Pr} (\alephbb^{\ell,n}, \Zbb^{\ell,n})=U(\widehat{J}\left(\gamma,\aleph) \right)$.
\end{proposition}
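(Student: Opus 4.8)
The plan is to deal with the four assertions in the order they are stated, the first two by direct verification and the last two — the genuine content — through a double limit, first sending $n\to\infty$ with $\ell$ fixed and then $\ell\to\infty$. First I would check that $\If^{\ell,n}:=(\alephbb^{\ell,n},\Zbb^{\ell,n})$ belongs to $\Sc_n$, which contains $\Cf^{\ell,n}\in\Xi_n$ as a by-product. The maps $\aleph^{\ell,i,n}$ and $\xi^{\ell,n}$ are (progressively) Borel by composition, and the integrability/admissibility requirements in the definitions of $\Xi_n$ and $\Sc_n$ follow from the boundedness of $\gamma^\ell$ and $\aleph^\ell$ by $\ell$, the linear growth of $\widehat b$ and $\widehat\alpha$, the non-degeneracy and boundedness of $\sigma$ (so that $\Xbb^{\ell,n}$ has all polynomial moments uniformly in $n$, by \Cref{prop:coercivity}-type estimates, and the quadratic variation of $\frac1n\sum_i\int_0^\cdot\gamma^\ell\,\mathrm dW^i$ is of order $1/n$), together with the growth bounds of \Cref{assum:main1}--\Cref{assump:growth_coercivity1}. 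The decisive point is that, by construction, $nZ^{\ell,i}_t=\gamma^\ell(t,X^{\ell,i}_t)$ and $H-\sigma^{-1}\gamma^\ell\,\widehat b=\widehat L$, so that $(Y^{\ell,n},\Zbb^{\ell,n},\Xbb^{\ell,n})$ is exactly the solution of the BSDE system \eqref{eq:BSDE1_agent}--\eqref{eq:BSDE2_agent} attached to $\Cf^{\ell,n}$, with $g(\mu^{\ell,n},\xi^{\ell,n}(\Xbb^{\ell,n}))=Y^{\ell,n}_T$ and $\alphab^{\ell,n}=\alphab^{\star,n}$. \Cref{prop:characterization_agent} then yields at once $\alphab^{\ell,n}\in\Ac^{\star,n}_n(\Cf^{\ell,n})$, i.e.\ the equilibrium identity $J^{\Cf^{\ell,n}}_n(\alphab^{\ell,n})=\sup_{\betab}J^{\Cf^{\ell,n}}_n(\betab)$, and $\E[\Rr^{\Cf^{\ell,n}}_n(\alphab^{\ell,n})\mid\Xbb^n_0]=Y^{\ell,n}_0=Y^{\aleph,\gamma}_0\ge R$; hence $\alphab^{\ell,n}\in\mathrm{PE}[\Cf^{\ell,n}]$.

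For fixed $\ell$, $\Xbb^{\ell,n}$ is a McKean--Vlasov interacting particle system whose drift $x\mapsto\widehat b(t,x,\pi,\aleph^\ell(t,x),\gamma^\ell(t,x))$ is only Borel in $x$ but Lipschitz in $\pi$ and of linear growth, with non-degenerate bounded diffusion. I would invoke the propagation-of-chaos/weak-limit machinery already used for \Cref{prop:conv-principal} (\Cref{prop:relaxed_appr}, \Cref{prop:weak_appr}, \Cref{prop:markovian_appr}, in the spirit of \cite{djete2019general,Lacker-strong-2018,MFD-2020,MFD-2020-closed}), or equivalently a direct Girsanov argument since the drift is a bounded perturbation of the driftless system $\Ubb^n$, to obtain that $\Lc(\mu^{\ell,n},\delta_{\muh^{\ell,n}_t}(\mathrm dm)\mathrm dt)$ converges to $\Lc(\mu^\ell,\delta_{\muh^\ell_t}(\mathrm dm)\mathrm dt)$, where $(\mu^\ell,\muh^\ell)$ is the flow of marginals of the McKean--Vlasov SDE driven by $(\gamma^\ell,\aleph^\ell)$; uniqueness in law of that SDE — which follows from the Girsanov representation together with the $\Wc_p$-Lipschitz dependence of the drift on the measure — upgrades subsequential convergence to convergence of the whole sequence. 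For the $Y$-component I would use $Y^{\ell,n}_\cdot=Y^{\aleph,\gamma}_0-\frac1n\sum_i\int_0^\cdot\widehat L(t,X^{\ell,i}_t,\mu^{\ell,n},\aleph^\ell(t,X^{\ell,i}_t),\gamma^\ell(t,X^{\ell,i}_t))\,\mathrm dt+\frac1n\sum_i\int_0^\cdot\gamma^\ell(t,X^{\ell,i}_t)\,\mathrm dW^i_t$, in which the martingale term vanishes in $L^2$ and the empirical average of the $\widehat L$-integrals converges, by a law-of-large-numbers argument combined with the convergence of $\mu^{\ell,n}$, to the deterministic path $Y^{\gamma^\ell,\aleph^\ell}$. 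Collecting the three components gives $\lim_{n\to\infty}\Lc(\mu^{\ell,n},Y^{\ell,n},\delta_{\muh^{\ell,n}_t}(\mathrm dm)\mathrm dt)=\Lc(\mu^\ell,Y^{\gamma^\ell,\aleph^\ell},\delta_{\muh^\ell_t}(\mathrm dm)\mathrm dt)$ in $\Wc_1$; moreover the inner random variable of $\widehat J_{n,\Pr}(\If^{\ell,n})$ converges to the deterministic constant $\widehat J(\gamma^\ell,\aleph^\ell)$, and since $U$ is continuous with linear growth and the relevant family is uniformly integrable (by the sub-quadratic growth bounds of \Cref{assump:growth_coercivity1} and the uniform-in-$n$ moment estimates), $\widehat J_{n,\Pr}(\If^{\ell,n})\to U(\widehat J(\gamma^\ell,\aleph^\ell))$.

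It remains to pass $\ell\to\infty$. Since $|\widehat b(t,x,\pi,\aleph^\ell(t,x),\gamma^\ell(t,x))|\le\overline c(1+|x|+\sup_{s\le t}\|\pi(s)\|_p+|\aleph(t,x)|+|\gamma(t,x)|)$ \emph{uniformly in $\ell$}, and since \eqref{eq:opti_control_cond} gives exponential integrability of $|\aleph(\cdot,U)|^2$ and $|\gamma(\cdot,U)|^2$ along the driftless path $U$, a Girsanov change of measure combined with Hölder's inequality yields moment bounds for $X^\ell$ that are uniform in $\ell$; a Gronwall estimate for $\Wc_p(\mu^\ell_t,\mu_t)$ whose source term tends to $0$ by dominated convergence (using $\gamma^\ell\to\gamma$, $\aleph^\ell\to\aleph$ pointwise) then gives $\mu^\ell\to\mu$, $\muh^\ell\to\muh$ and $Y^{\gamma^\ell,\aleph^\ell}\to Y^{\gamma,\aleph}$. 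This produces the stated double-limit identity in $\Wc_1$, and, applying once more the continuity and linear growth of $U$ together with dominated convergence, $\lim_{\ell\to\infty}\lim_{n\to\infty}\widehat J_{n,\Pr}(\alephbb^{\ell,n},\Zbb^{\ell,n})=U(\widehat J(\gamma,\aleph))$; as a by-product one checks a posteriori that $(\gamma^\ell,\aleph^\ell)\in\Ac$ for every $\ell$.

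I expect the hard part to be the first limit: establishing propagation of chaos for a McKean--Vlasov system whose drift is only Borel in the state variable, and then turning subsequential convergence into full-sequence convergence via uniqueness in law of the limiting equation. The uniform-in-$\ell$ a priori estimates needed for the second limit — which must be squeezed out of \eqref{eq:opti_control_cond} by a Girsanov argument rather than out of boundedness — are the other genuinely technical ingredient.
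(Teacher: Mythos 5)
Your overall architecture coincides with the paper's: admissibility of $\Cf^{\ell,n}$ from the boundedness of $(\gamma^\ell,\aleph^\ell)$ (the paper actually needs and obtains \emph{exponential} moments of $\sup_t|X^{\ell,i}_t|$ via Gronwall, since membership in $\Xi_n$ requires them --- your appeal to ``polynomial moments'' is not quite enough here, though the fix is immediate); optimality of $\alphab^{\ell,n}$ via \Cref{prop:characterization_agent} after observing that $nZ^{\ell,i}=\gamma^\ell(t,X^{\ell,i}_t)$ turns the constructed system into the BSDE characterization; and the inner limit $n\to\infty$ at fixed $\ell$ by propagation of chaos, which is exactly the paper's citation of \cite[Theorem 2.5]{Lacker-strong-2018} (this step is \emph{not} the hard part you anticipate: Lacker's result covers drifts that are merely measurable in $x$ provided they are bounded and Lipschitz in the measure, which is precisely why the truncation at level $\ell$ is introduced).

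The one step that would fail as written is your mechanism for the outer limit $\ell\to\infty$. You propose ``a Gronwall estimate for $\Wc_p(\mu^\ell_t,\mu_t)$ whose source term tends to $0$ by dominated convergence.'' Any such Gronwall estimate requires comparing the drifts of $X^\ell$ and $X$ along a coupling, and the composed drift $x\mapsto\widehat b(t,x,\pi,\aleph(t,x),\gamma(t,x))$ is only Borel in $x$: the discrepancy $|\aleph(t,X^\ell_t)-\aleph(t,X_t)|$ cannot be controlled by $|X^\ell_t-X_t|$, so the Gronwall loop does not close. Dominated convergence handles $\aleph^\ell-\aleph$ evaluated at the \emph{same} point, not at two different points of a merely measurable map. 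The paper circumvents this in \Cref{prop:first_cong_bounded} by pushing \emph{everything} onto the common driftless path $U$ via Girsanov ($\Lc(X^\ell)=\Lc^{\P^\ell}(U)$ with density $L^{\mu^\ell,\aleph^\ell,\gamma^\ell}_T$), so that $\aleph^\ell(t,U_t)\to\aleph(t,U_t)$ and $\gamma^\ell(t,U_t)\to\gamma(t,U_t)$ hold pointwise along a fixed path, and then identifies the limit of the density processes by a martingale problem combined with stable convergence, concluding by uniqueness in law of the limiting McKean--Vlasov equation. You invoke Girsanov only to obtain moment bounds uniform in $\ell$ and then revert to a pathwise comparison; you would need to keep the argument entirely on the Girsanov side (comparing the densities $L^{\mu^\ell,\aleph^\ell,\gamma^\ell}$ and $L^{\mu,\aleph,\gamma}$, where the Lipschitz dependence of $\widehat b$ on the measure argument and the pointwise convergence of the truncations do suffice) for the step to go through. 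The remaining ingredients --- vanishing of the martingale part of $Y^{\ell,n}$ in $L^2$, uniform integrability, continuity and monotonicity of $U$ --- are handled as in the paper.
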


\begin{proof}
    For each $\ell, n \ge 1$, we can notice that $\gamma^\ell$ and $\aleph^\ell$ are bounded (by $\ell$), since all the exponential moments of $\Lc(\iota)$ are finite i.e. $\E\left[e^{c|\iota|} \right]< \infty$ for any $c$, by Gronwall lemma, we can show that all the exponential moments of $\sup_{t \le T} |X^{\ell,i}_t|$ are finite. Therefore, $Y^{\ell,n}_T$ has all his exponential moments finite and by using the conditions over $(\aleph,\gamma)$,  $(\xi^{\ell,n},\alephbb^{\ell,n})$ belongs to $\widehat{\Xi}_n$. By \Cref{prop:characterization_agent}, we can verify that $\alphab^{\ell,n}$ is an optimal control. The convergence result is an application of \Cref{prop:first_cong_bounded} and a Propagation of chaos result of \cite[Theorem 2.5.]{Lacker-strong-2018} since for each $\ell \ge 1$, the map $(t,x,\pi) \mapsto \widehat{b}(t,x,\pi,\aleph^\ell(t,x),\gamma^\ell(t,x))$ is bounded and Lipschitz in $m$.
\end{proof}

\begin{corollary} \label{corollary:approx_cong_PA}
    In the context of the previous {\rm \Cref{prop:approx_cong_PA}}, if $(\gamma,\aleph)$ are optimal for $\widehat{V}$ then for each $n, \ell \ge 1$, $\Cf^{\ell,n}=(\xi^{\ell,n},\alephbb^{\ell,n})$ is an $\delta_{\ell,n}$--optimal contract for the problem of the Principal with
    \begin{align*}
        \lim_{\ell \to \infty}\lim_{n \to \infty}\left(\delta_{\ell,n}:=\sup_{\If'^n=(\alephbb'^n,\Zbb'^n) \in \Sc_n} \widehat{J}_{n,\Pr} (\If'^n)- \widehat{J}_{n,\Pr} (\alephbb^{\ell,n}, \Zbb^{\ell,n}) \right)
         =0.
    \end{align*}
\end{corollary}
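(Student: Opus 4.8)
The plan is to obtain the corollary by simply combining \Cref{prop:approx_cong_PA} with the convergence of the principal's value in \Cref{prop:conv-principal} and the reformulation \Cref{prop:reformulation_principal}. First I would use the hypothesis that $(\gamma,\aleph)$ is optimal for $\widehat{V}$ to write $\widehat{J}(\gamma,\aleph)=\widehat{V}$; plugging this into the last assertion of \Cref{prop:approx_cong_PA} gives $\lim_{\ell\to\infty}\lim_{n\to\infty}\widehat{J}_{n,\Pr}(\alephbb^{\ell,n},\Zbb^{\ell,n})=U(\widehat{V})$. In particular, for each fixed $\ell$ the inner limit $a_\ell:=\lim_{n\to\infty}\widehat{J}_{n,\Pr}(\alephbb^{\ell,n},\Zbb^{\ell,n})$ exists (this is part of \Cref{prop:approx_cong_PA}) and $a_\ell\to U(\widehat{V})$ as $\ell\to\infty$.

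Next I would check that $\If^{\ell,n}:=(\alephbb^{\ell,n},\Zbb^{\ell,n})$ is admissible, i.e. $\If^{\ell,n}\in\Sc_n$ for every $\ell,n$; this is exactly the content of the first claims of \Cref{prop:approx_cong_PA} ($\Cf^{\ell,n}\in\Xi_n$ and $\alphab^{\ell,n}\in\mathrm{PE}[\Cf^{\ell,n}]$) together with the correspondence between contracts admitting an equilibrium and elements of $\Sc_n$ used in the proof of \Cref{prop:reformulation_principal}. It follows that $\widehat{J}_{n,\Pr}(\If^{\ell,n})\le\sup_{\If'^n\in\Sc_n}\widehat{J}_{n,\Pr}(\If'^n)=\widehat{V}_{n,\Pr}=V_{n,\Pr}$, so that $\delta_{\ell,n}\ge 0$ and $\delta_{\ell,n}$ is genuinely the optimality gap of the contract $\Cf^{\ell,n}$ for $V_{n,\Pr}$, which is what ``$\delta_{\ell,n}$-optimal'' means.

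Finally I would let $n\to\infty$ in the definition of $\delta_{\ell,n}$: by \Cref{prop:conv-principal} (with \Cref{prop:reformulation_principal}) the supremum $\sup_{\If'^n\in\Sc_n}\widehat{J}_{n,\Pr}(\If'^n)$ converges to $U(\widehat{V})$ and this limit does not depend on $\ell$, so $\lim_{n\to\infty}\delta_{\ell,n}=U(\widehat{V})-a_\ell$, and then $\lim_{\ell\to\infty}\lim_{n\to\infty}\delta_{\ell,n}=U(\widehat{V})-\lim_{\ell\to\infty}a_\ell=0$, which is the claim. There is no serious difficulty here: the corollary only rearranges previously established results, the single point requiring a line of care being the membership $\If^{\ell,n}\in\Sc_n$, which makes the supremum dominate $\widehat{J}_{n,\Pr}(\If^{\ell,n})$ and hence renders the gaps $\delta_{\ell,n}$ non-negative and meaningful.
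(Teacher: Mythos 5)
Your proposal is correct and follows essentially the same route as the paper: membership of $(\alephbb^{\ell,n},\Zbb^{\ell,n})$ in $\Sc_n$ gives $\delta_{\ell,n}\ge 0$, \Cref{prop:approx_cong_PA} with optimality of $(\gamma,\aleph)$ gives $\lim_{\ell}\lim_{n}\widehat{J}_{n,\Pr}(\alephbb^{\ell,n},\Zbb^{\ell,n})=U(\widehat{V})$, and \Cref{prop:conv-principal} handles the supremum term. The only (harmless) extra care you add is isolating the inner limit $a_\ell$ for fixed $\ell$, which the paper leaves implicit.
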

\begin{proof}
    Notice that, by definition of $\delta_{\ell,n}$, since $(\alephbb^{\ell,n}, \Zbb^{\ell,n}) \in \Sc_n$, we have $\delta_{\ell,n} \ge 0$ and
    $$ 
        \widehat{J}_{n,\Pr} (\alephbb^{\ell,n}, \Zbb^{\ell,n}) \ge \sup_{\If'^n=(\alephbb'^n,\Zbb'^n) \in \Sc_n} \widehat{J}_{n,\Pr} (\If'^n) - \delta_{\ell,n}.
    $$
    By the previous Proposition, we know that $\lim_{\ell \to \infty}\lim_{n \to \infty} \widehat{J}_{n,\Pr} (\alephbb^{\ell,n}, \Zbb^{\ell,n})=U(\widehat{J}\left(\gamma,\aleph) \right)=U(\widehat{V})$. Combining this result with \Cref{prop:conv-principal}, we can deduce that $\lim_{\ell \to \infty}\lim_{n \to \infty}\delta_{\ell,n} = 0.$
\end{proof}

\subsubsection{Proof of \Cref{thm:cong_PA_comp}} 
The proof of \Cref{thm:cong_PA_comp} is just a straightforward application of \Cref{prop:reformulation_principal} and \Cref{prop:conv-principal}. Indeed, by \Cref{prop:reformulation_principal}, the value function of the Principal $V_{n,\Pr}$ is equal to $\widehat{V}_{n,\Pr}$. And, by \Cref{prop:conv-principal}, we have $\lim_{n \to \infty} V_{n,\Pr}=\lim_{n \to \infty} \widehat{V}_{n,\Pr}=U(\widehat{V})$.

\subsubsection{Proof of \Cref{thm:fromLimit}}
For the proof of \Cref{thm:fromLimit}, it is enough to use the equivalence in \Cref{prop:reformulation_principal} i.e. $V_{n,\Pr}=\widehat{V}_{n,\Pr}$  and to apply \Cref{prop:approx_cong_PA} and \Cref{corollary:approx_cong_PA}.

\subsection{Principal--agent problem with McKean--Vlasov dynamics}
\medskip
\paragraph*{The problem of the agent} We present here the result associated to the problem of the agent for the problem of Principal--Agent with McKean--Vlasov dynamics. We recall that the setup has been introduced in  \Cref{section:PA_McKean}.
%Here the set of contracts are $\Cf=\left(\overline{\xi},\aleph \right) \in \Xi$ where $\aleph:[0,T] \x \R \to \Er$ and $\overline{\xi}:\Pc_p(C([0,T];\R)) \to \R$ are Borel maps and, there exist Borel maps  ${\xi}:C([0,T];\R) \x \Pc_p(C([0,T];\R)) \to \R$ and $\psi:C([0,T];\R) \to \R_+$ s.t. $g \left(m, \overline{\xi}(m)  \right)=\int_{C([0,T];\R)}\xi(x,m)m(\mathrm{d}x)$   and
%\begin{align*}
 %   \sup_{x,m}\frac{\left|\xi(x,m) \right|}{1 + \psi(x) + \|m\|_p^p} < \infty,
%\end{align*}
%the supremum is taken over $x \in C([0,T];\R)$ and $m \in \Pc_p(C([0,T];\R))$ with $\int_{C([0,T];\R)} |\xi(x,m)|m(\mathrm{d}x)< \infty$.
%Furthermore, for each $x$, $m \mapsto \xi(x,m)$ is continuous for $\Wc_p$ and,  $\aleph$ and the map $\psi$ satisfy $\E\left[\int_0^T|\aleph(t,U_t)|^{e}\mathrm{d}t + \psi(U)^r \right] < \infty$ for some $r, e \ge 2$ where $U_0=\iota$, $\mathrm{d}U_t=\sigma(t,U_t)\mathrm{d}W_t$. We will usually refer to $\xi$ instead of $\overline{\xi}$.

%there exists $\xi: C([0,T];\R) \x \Pc(C([0,T];\R)) \to \Er$ a Borel map s.t. for each $m \in \Pc_p$, $\int_{C([0,T];\R)} |\xi(x,m)|\mu(\mathrm{d}x)< \infty$, $\overline{\xi}(m)=\int_{C([0,T];\R)} \xi(x,m)\mu(\mathrm{d}x)$ and for each $x$, $m \mapsto \xi(x,m)$ is continuous for $\Wc_p$. 

\medskip
 When the contract belongs to $\widehat{\Xi}$,  the next result shows that the set of optimal controls ${\Ac}^\star(\Cf)$ is actually unique with a specific shape.
\begin{proposition} \label{prop:characterization_control}
     Let {\rm\Cref{assum:main1}},{\rm\Cref{assump:growth_coercivity}} and {\rm\Cref{assump:growth_coercivity1}} be true. Given a contract $\Cf=(\xi,\aleph) \in \widehat{\Xi}$ associated to $\gamma$ and $Y_0$, any optimal control $\alpha$ of the agent's problem satisfies: $\mathrm{d}t \otimes\P$--a.e.
    $$
        \alpha(t,X^\alpha)=\widehat{\alpha}(t,X^\alpha_t,\mu^\alpha, \aleph(t,X^\alpha_t), \sigma(t,X^\alpha_t)^{-1} \gamma(t,X^\alpha_t) ).
    $$
    In addition, $Y_0=\sup_{\alpha \in {\Ac}(\Cf)} {J}^{\Cf}_A(\alpha).$
\end{proposition}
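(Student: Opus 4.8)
The plan is to convert the defining identity of $\widehat{\Xi}$ into an explicit formula for $J^\Cf_A(\alpha)$ in which the gap to the Hamiltonian $H$ appears, and then read off both the value and the characterisation. Fix $\Cf=(\xi,\aleph)\in\widehat{\Xi}$ with associated data $(\gamma,Y_0)$ and take an arbitrary $\alpha\in\Ac(\Cf)$. Substituting
\[
    g\big(\mu^\alpha,\overline{\xi}(\mub^\alpha)\big)
    = Y_0 - \E\Big[\int_0^T H\big(t,X^\alpha_t,\mu^\alpha,\aleph(t,X^\alpha_t),\gamma(t,X^\alpha_t)\big)\,\mathrm{d}t\Big]
    + \E\Big[\int_0^T \gamma(t,X^\alpha_t)\sigma(t,X^\alpha_t)^{-1}\,\mathrm{d}X^\alpha_t\Big]
\]
into $J^\Cf_A(\alpha)=\E[\int_0^T L(\,\cdot\,)\,\mathrm{d}t]+g(\mu^\alpha,\overline{\xi}(\mub^\alpha))$, replacing $\mathrm{d}X^\alpha_t$ by its dynamics $b(t,X^\alpha_t,\mu^\alpha,\aleph(t,X^\alpha_t),\alpha(t,X^\alpha))\,\mathrm{d}t+\sigma(t,X^\alpha_t)\,\mathrm{d}W_t$, and discarding the martingale contribution $\E[\int_0^T\gamma(t,X^\alpha_t)\,\mathrm{d}W_t]=0$ (a localisation/Girsanov argument exactly as in the proof of \Cref{prop:characterization_agent}, using $\E[|L^\alpha_T|^2]<\infty$ from $\Ac(\Cf)$ and the exponential integrability of $\int_0^T|\gamma(t,U_t)|^2\,\mathrm{d}t$ from $\widehat{\Xi}$, with the growth bounds of \Cref{assump:growth_coercivity1} ensuring all expectations are well defined), one obtains, with $h(t,x,\pi,e,z,a)=b(t,x,\pi,e,a)\sigma^{-1}(t,x)z+L(t,x,\pi,e,a)$,
\[
    J^\Cf_A(\alpha)
    = Y_0 + \E\Big[\int_0^T \Big( h\big(t,X^\alpha_t,\mu^\alpha,\aleph(t,X^\alpha_t),\gamma(t,X^\alpha_t),\alpha(t,X^\alpha)\big) - H\big(t,X^\alpha_t,\mu^\alpha,\aleph(t,X^\alpha_t),\gamma(t,X^\alpha_t)\big)\Big)\,\mathrm{d}t\Big].
\]

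By construction $H(t,x,\pi,e,z)=\sup_{a\in A}h(t,x,\pi,e,z,a)$, the supremum being attained exactly at $\widehat{\alpha}(t,x,\pi,e,\sigma^{-1}(t,x)z)$, the unique maximiser in \eqref{eq:maximizee}. Hence the integrand above is pointwise $\le0$, so $J^\Cf_A(\alpha)\le Y_0$ for every $\alpha\in\Ac(\Cf)$ and $\sup_{\alpha\in\Ac(\Cf)}J^\Cf_A(\alpha)\le Y_0$. For the reverse inequality I would exhibit a control attaining $Y_0$: solve the McKean--Vlasov equation $\mathrm{d}X^\star_t=\widehat{b}(t,X^\star_t,\mu^\star,\aleph(t,X^\star_t),\gamma(t,X^\star_t))\,\mathrm{d}t+\sigma(t,X^\star_t)\,\mathrm{d}W_t$ with $\mu^\star_t=\Lc(X^\star_t)$ — well posed in the weak sense by the non--degeneracy of $\sigma$ in \Cref{assum:main1}, the Lipschitz dependence of $\widehat{b}$ on the measure argument in \Cref{assump:growth_coercivity}, and the exponential integrability in $\widehat{\Xi}$ — and set $\alpha^\star(t,x):=\widehat{\alpha}(t,x_t,\mu^\star,\aleph(t,x_t),\sigma^{-1}(t,x_t)\gamma(t,x_t))$. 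Then $X^{\alpha^\star}=X^\star$, $\mu^{\alpha^\star}=\mu^\star$, the integrand vanishes identically, and $J^\Cf_A(\alpha^\star)=Y_0$; admissibility $\alpha^\star\in\Ac(\Cf)$ is checked from the linear growth of $\widehat{\alpha}$ in \Cref{assump:growth_coercivity}, the finite exponential moments of $\iota$, and the integrability conditions in $\widehat{\Xi}$, exactly as in the construction underlying \Cref{prop:PA_McKean_construction}. This gives $Y_0=\sup_{\alpha\in\Ac(\Cf)}J^\Cf_A(\alpha)$, the second assertion.

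For the first assertion, let $\alpha$ be optimal; then $J^\Cf_A(\alpha)=Y_0$, so the nonpositive integrand in the displayed identity must vanish $\mathrm{d}t\otimes\mathrm{d}\P$--a.e., i.e. $h(t,X^\alpha_t,\mu^\alpha,\aleph(t,X^\alpha_t),\gamma(t,X^\alpha_t),\alpha(t,X^\alpha))=H(t,X^\alpha_t,\mu^\alpha,\aleph(t,X^\alpha_t),\gamma(t,X^\alpha_t))$ a.e.; since $\widehat{\alpha}(t,x,\pi,e,\sigma^{-1}(t,x)z)$ is the \emph{unique} maximiser of $a\mapsto h(t,x,\pi,e,z,a)$, this forces $\alpha(t,X^\alpha)=\widehat{\alpha}(t,X^\alpha_t,\mu^\alpha,\aleph(t,X^\alpha_t),\sigma(t,X^\alpha_t)^{-1}\gamma(t,X^\alpha_t))$ a.e. The step I expect to be the main obstacle is the attainment of $Y_0$: solving the McKean--Vlasov fixed point for the Hamiltonian--maximising feedback and, above all, verifying that the resulting control lies in $\Ac(\Cf)$ (finiteness of $\E[|L^{\alpha^\star}_T|^2]$ and of the cost), where only the tailored exponential integrability built into $\widehat{\Xi}$ keeps the estimates tight; by contrast, the vanishing of the stochastic integral in the first step and the optimality characterisation itself are comparatively routine.
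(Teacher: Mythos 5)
Your proposal is correct and follows essentially the same route as the paper: substitute the defining identity of $\widehat{\Xi}$ into $J^\Cf_A(\alpha)$, replace $\mathrm{d}X^\alpha_t$ by its dynamics to obtain $J^\Cf_A(\alpha)=Y_0+\E[\int_0^T (h-H)\,\mathrm{d}t]\le Y_0$, attain the bound with the feedback control $\alpha^\star$ built from $\widehat{\alpha}$ and the McKean--Vlasov dynamics, and conclude the characterisation from the uniqueness of the Hamiltonian maximiser. The integrability step you flag (finiteness of $\E[\int_0^T\gamma(t,X^\alpha_t)^2\,\mathrm{d}t]$ via Cauchy--Schwarz with $L^\alpha_T$) is exactly how the paper justifies discarding the stochastic integral.
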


\begin{proof}
     Let $\Cf=(\xi,\aleph) \in \widehat{\Xi}$ be a contract associated to $\gamma$ and $Y_0$, and for any admissible control $\alpha \in {\Ac}(\Cf)$. We can check that 
     \begin{align*}
         &\E \left[ \int_0^T \gamma(t,X^\alpha_t)^2\sigma(t,X^\alpha_t)^{-2} \mathrm{d} \langle X^\alpha \rangle_t  \right]
         =
         \E \left[ \int_0^T \gamma(t,X^\alpha_t)^2\mathrm{d}t  \right] 
         \\
         &=
         \E \left[ \int_0^T L^\alpha_T \gamma(t,U_t)^2\mathrm{d}t  \right]
         \le \E \left[ |L^\alpha_T|^2 \right]^{1/2} \E \left[ \left|\int_0^T \gamma(t,U_t)^2\mathrm{d}t \right|^2 \right]^{1/2}< \infty.
     \end{align*}
     Then,
     \begin{align*}
        & g \left(\mu^\alpha, \overline{\xi}(\mub^\alpha) \right)
        \\
        &=
        Y_0 - \E\left[ \int_0^T H(t,X^\alpha_t,\mu^\alpha, \aleph(t,X^\alpha_t), \gamma(t,X^\alpha_t)) \mathrm{d}t \right] + \E \left[ \int_0^T \gamma(t,X^\alpha_t)\sigma(t,X^\alpha_t)^{-1} \mathrm{d}X^\alpha_t \right]
        \\
        &=Y_0 - \E\left[ \int_0^T H(t,X^\alpha_t,\mu^\alpha, \aleph(t,X^\alpha_t) ,\gamma(t,X^\alpha_t)) \mathrm{d}t \right] 
        \\
        &~~~~~~~~~~~~~~+ \E \left[ \int_0^T \gamma(t,X^\alpha_t)\sigma(t,X^\alpha_t)^{-1} b \left(t, X^\alpha_t, \mu^\alpha, \aleph(t,X^\alpha_t), \alpha(t,X^\alpha) \right) \mathrm{d}t \right].
    \end{align*}
    Consequently,
    \begin{align*}
        {J}^{\Cf}_A(\alpha)
        &=
        \E \left[ Y_0 + \int_0^T h(t,X^\alpha_t,\mu^\alpha, \aleph(t,X^\alpha_t), \gamma(t,X^\alpha_t), \alpha(t,X^\alpha)) -H(t,X^\alpha_t,\mu^\alpha, \aleph(t,X^\alpha_t), \gamma(t,X^\alpha_t))  \mathrm{d}t  \right]
        \\
        &\le \E \left[ Y_0 \right] =  J_{\Cf}(\alpha^\star)
    \end{align*}
    where $\alpha^\star(t,x):=\widehat{\alpha}(t,x_t,\mu,\aleph(t,x_t), \sigma(t,x_t)^{-1} \gamma(t,x_t))$ with $\Lc(X_t)=\mu_t$, $X_0=\iota$ and
    $$
        \mathrm{d}X_t
    =
    \widehat{b}\left(t,X_t,\mu,\aleph(t,X_t), \gamma(t,X) \right) \mathrm{d}t + \sigma(t,X_t) \mathrm{d}W_t.
    $$
    We can check that $\alpha^\star \in {\Ac}(\Cf)$. %and since the distribution $\mu$ is uniquely defined in distribution (see [??])  we have $\mu=\Lc(X^{\alpha^\star})=\Lc(X^{\alpha^\star}|\Gc^{\alpha^\star}_T)$ (see for instance [PPD-Mckean]). 
    Therefore, if $\alpha$ is optimal, by definition of the Hamiltonian, we have necessarily $\mathrm{d}t \otimes \mathrm{d}\P$--a.e.
    \begin{align*}
        H(t,X^\alpha_t,\mu^\alpha, \aleph(t,X^\alpha_t), \gamma(t,X^\alpha_t))= h(t,X^\alpha_t,\mu^\alpha, \aleph(t,X^\alpha_t), \gamma(t,X^\alpha_t), \alpha(t,X^\alpha)).
    \end{align*}
    By uniqueness of the maximizer of the Hamiltonian we can deduce the result. %By \Cref{assump:growth_coercivity}, the SDE associated to $\alpha^\star$ is uniquely defined in distribution then $\Lc(X^{\alpha^\star}_t|\Fc^\mu_t)=\Lc(X^{\alpha^\star}_t)$.
    
\end{proof}

\subsubsection{Proof of \Cref{prop:PA_Mckean_charac}}

\begin{proof}
    
\medskip
$\boldsymbol{(i)}$ %Now we have $b$ bounded and $g(\cdot,e)=e$. 
$\boldsymbol{\rm Step\;1:\;From\;the\;limit\;to\;the\;n\mbox{--}player\;approximation\;viewpoint}$
Let $\Cf=(\xi,\aleph) \in \Xi$. Let $\alpha$ be an admissible control associated to $\Cf$. We introduce $\widetilde{\alpha}^n(\cdot):=\alpha(\cdot) \wedge n$, $\widetilde{\aleph}^n(\cdot):=\aleph(\cdot) \wedge n$, $\widetilde{\xi}^n(\cdot):= \xi(\cdot) \wedge n$, $g^n(\cdot):= g(\cdot) \wedge n$,
\begin{align*}
    \mathrm{d}X^{i}_t
    =
    b\left(t,X^i_t,\mu^n_t,\widetilde{\aleph}^n(t,X^i_t), \widetilde{\alpha}^n(t,X^i) \right) \mathrm{d}t + \sigma(t,X^{i}_t) \mathrm{d}W^i_t,\;\mut^n_t=\frac{1}{n} \sum_{i=1}^n \delta_{(X^i_t,\;\tilde{\alpha}^n(t,X^i_t), \tilde{\aleph}^n(t,X^i_t) )}\;\mbox{and}\;\mub^n:=\frac{1}{n}\sum_{i=1}^n \delta_{X^i}.
\end{align*}
%with $\mu^\alpha$ independent of $(\iota^i,W^i)_{i \ge 1}$.  
With the help of \Cref{prop:weak_borel} and the assumptions over the coefficients, we obtain: with $\mut_t=\Lc(X^\alpha_t, \alpha(t,X^\alpha),\aleph(t,X^\alpha_t))$, $\lim_{n \to \infty}  \Lc \left(\delta_{\tilde \mu^n_t}(\mathrm{d}m)\mathrm{d}t \right)=\Lc\left(\delta_{\tilde \mu_t}(\mathrm{d}m)\mathrm{d}t \right)$ in $\Wc_2$, %and for any $f \in B_{\psi}$,we have
%\begin{align*}
    %\lim_{n \to \infty} \langle f, \mu^n \rangle=\langle f, \mu \rangle\;\mbox{in probability}.
%\end{align*}
Then
\begin{align} \label{eq:cong_AnyControl}
    &\lim_{n \to \infty}\E \left[ \frac{1}{n} \sum_{i=1}^n \int_0^T L\left(t,X^i_t,\mu^n,\widetilde{\aleph}^n(t,X^i_t), \widetilde{\alpha}^n(t,X^i) \right) \mathrm{d}t +  \frac{1}{n} \sum_{i=1}^n \widetilde{\xi}^n(X^i, \mub^n) \right] \nonumber
    \\
    &=
    \E \left[ \int_0^T L\left(t,X^\alpha_t,\mu^\alpha,\aleph(t,X^\alpha_t), \alpha(t,X^\alpha) \right) \mathrm{d}t +  \xi(X^\alpha,\mub^\alpha)\right].
\end{align}

\medskip
For any $(t,x^1,\cdots,x^n) \in [0,T] \x \Cc^n$, let us set $\alpha^{i,n}=\widetilde{\alpha}^n(t,x^i)$,  $\aleph^{i,n}(t,x^1,\cdots,x^n):=\widetilde{\aleph}^n(t,x^i(t))$ and $\xi^n(x^1,\dots,x^n):=\frac{1}{n}\sum_{i=1}^n\widetilde{\xi}^n(x^i,\pi^n)$, $\pi^n:=\frac{1}{n} \sum_{i=1}^n \delta_{x^i}$ and, $\alphab:=(\alpha^{1,n},\cdots,\alpha^{n,n})$ and $\Cf^n:=(\xi^n,\alephbb^n)$.

\medskip
As in the proof of \Cref{prop:characterization_agent}, let us define the processes $(U^i)_{1 \le i \le n}$ as the solution of: $U^i_0=\iota^i$ and $\mathrm{d}U^i_t=\sigma(t,U^i_t) \mathrm{d}W^i_t$. By using \cite[Corollary 6]{briand2008quadratic}, there exists a unique solution $(\widehat{Y}^n, \widehat{\Zbb}^n)$ satisfying: $(\widehat{Y}^n, \widehat{\Zbb}^n) \in \S \x \H^n$, $\widehat{Y}^n_T=\xi^n(\Ubb^n)$,
\begin{align*}
    \mathrm{d} \widehat{Y}^n_t=
    - \frac{1}{n} \sum_{i=1}^n H (t, U^i_t, \nu^n, \aleph^{i,n}(t, \Ubb^n ), n \widehat{Z}^i_t) \mathrm{d}t + \sum_{i=1}^n \widehat{Z}^i_t \mathrm{d}W^i_t\;\mbox{with}\;\nu^n_t=\frac{1}{n} \sum_{i=1}^n \delta_{U^i_t}.
\end{align*}
We define the probability $\P^{\boldsymbol{\alpha}}$ by $\frac{\mathrm{d \P^{\boldsymbol{\alpha}}}}{\mathrm{d}\P}:=L^{n}_T$ with $\phi^i(t,\Ubb^n)
        :=
        \sigma^{-1}(t,U^i_t) b\left(t,U^i_{t}, \nu^n ,\aleph^{i,n}(t,\Ubb^n),\alpha(t,U^i) \right)$ and $\frac{\mathrm{d}L^{n}_t}{L^{n}_t}= \sum_{i=1}^n \phi^i(t,\Ubb^n)\;\mathrm{d}W^i_t$.
    Notice that, since $b$ is bounded,  by Girsanov Theorem, we have that $W^{\alpha,i}_\cdot=W^i_\cdot-\int_0^\cdot \phi^i(s,\Xbb_s) \mathrm{d}s$ is a $\P^{\boldsymbol{\alpha}}$--Brownian motion, 
    and $\Lc \left( \Xbb^{n} \right)=\Lc^{\P^{\boldsymbol{\alpha}}}( \Ubb^n)$. 
    %We set $\widehat{\alpha}^{i}(t,\Ubb^n):=\widehat{\alpha}(t,U^i_t, \nu^n, \aleph^{i,n}(t,\Ubb^n), \sigma(t,U^i_t)^{-1} n \widehat{Z}^i_t)$.
    By using similar techniques as in \Cref{prop:characterization_agent}, we have 
\begin{align} \label{eq:equation_M}
    M^{n}_t 
    &=
    \widehat{Y}^n_0 - \frac{1}{n} \sum_{i=1}^n \int_0^t H(s,U^i_s,\nu^n,\aleph^{i,n}(s,\Ubb^n), n \widehat{Z}^i_s) \mathrm{d}s \nonumber 
    \\
    &~~~~~~~~~~+ \frac{1}{n} \sum_{i=1}^n\int_0^t h(s,U^i_s, \nu^n, \aleph^{i,n}(s,\Ubb^n), n \widehat{Z}^i_s, \widetilde{\alpha}^n(s,U^i)) \mathrm{d}s + \sum_{i=1}^n \widehat{Z}^i_s \mathrm{d}W^{\alpha,i}_s
\end{align} 
with 
\begin{align} \label{eq:equation_M2}
    M^{n}_t
    :=
    \widehat{Y}^n_t + \frac{1}{n} \sum_{i=1}^n \int_0^t L\left(s,U^{i}_s,\nu^n,\aleph^{i,n}(s,\Ubb^n),\widetilde{\alpha}^n(s,U^i) \right)\;\mathrm{d}s.
\end{align} 
We can check that $J^{\Cf^n}_{n}(\alphab) = \E^{\P^\alpha}[M^n_T]$ and 
\begin{align} \label{eq:ineq_opti}
    J^{\Cf^n}_{n}(\alphab) \le \E\left[\widehat{Y}^n_0
        \right] = \E^{\P^\alpha}\left[\widehat{Y}^n_0
        \right]
        = \E^{\P^n} \left[ \frac{1}{n} \sum_{i=1}^n\int_0^T \widehat{L}\left(t,U^{i}_t,\nu^n,\aleph^{i,n}(t,\Ubb^n),n \widehat{Z}^i_t \right)\;\mathrm{d}t +\xi^n(\Ubb^n)\right]
\end{align}
where $\frac{\mathrm{d \P^{n}}}{\mathrm{d}\P}:=\widehat{L}^{n}_T\;\;\mbox{with}\;\;\frac{\mathrm{d}\widehat{L}^{n}_t}{\widehat{L}^{n}_t}= \sum_{i=1}^n \sigma^{-1}(t,U^i_t) \widehat{b}\left(t,U^i_{t}, \nu^n ,\aleph^{i,n}(t,\Ubb^n), n \widehat{Z}^i_t \right)\;\mathrm{d}W^i_t$. The last equality is essentially a verification argument by using the fact that $b$ is bounded, so that by Girsanov Theorem again, $\widehat{W}^i_\cdot=W^i_\cdot- \int_0^\cdot \sigma^{-1}(t,U^i_t) \widehat{b}\left(t,U^i_{t}, \nu^n ,\aleph^{i,n}(t,\Ubb^n), n \widehat{Z}^i_t \right) \mathrm{d}t$ is a $\P^n$--Brownian motion and, also $\E^{\P^n} \left[ \sum_{i=1}^n\int_0^T \widehat{Z}^{i,n}_t \mathrm{d}\widehat{W}^{i}_t \right]=0$ and $\E^{\P^{\alphab}} \left[ \sum_{i=1}^n\int_0^T \widehat{Z}^{i,n}_t \mathrm{d}W^{\alpha,i}_t \right]=0$.

\medskip
$\boldsymbol{\rm Step\;2:Checking\;of\;the\;tightness}$
Since $(\widehat{Z}^1,\cdots,\widehat{Z}^n)$ is $(\sigma(\Ubb^n_s:\;s \le t))_{t \in [0,T]}$--adapted, we write $n\widehat{Z}^i_t=C^i(t,\Ubb^n)$. We define $\Xbb=(X^1,\cdots,X^n)$ and $(L^1,\cdots,L^n)$ by 
\begin{align*}
    \mathrm{d}X^i_t= \widehat{b}(t,X^i_t,\eta^n, \aleph^{i,n}(t,\Xbb), C^i(t,\Xbb))\mathrm{d}t + \sigma(t,X^i_t) \mathrm{d}W^i_t,\;\frac{\mathrm{d}L^i_t}{L^i_t}=\sigma(t,X^i_t)^{-1}\widehat{b}(t,X^i_t,\eta^n, \aleph^{i,n}(t,\Xbb), C^i(t,\Xbb))\mathrm{d}W^i_t
\end{align*}
with $X^i_0=\iota^i$, $L^i_0=1$, $\overline{\eta}^n:=\frac{1}{n} \sum_{i=1}^n \delta_{X^i}$ and $\eta^n_t:=\frac{1}{n} \sum_{i=1}^n \delta_{X^i_t}$. Since $b$ is bounded, by Girsanov Theorem, we have $\Lc^{\P^n}(\Ubb^n)=\Lc(\Xbb)$ and $\Lc(X^i)=\Lc^{\Q^i}(U^i)$ where $\Q^i:=L^i_T \mathrm{d}\P$. By using the assumptions over $\xi,$ we obtain,
\begin{align*}
    &\sup_{n \ge 1}\left|\E^{\P^n}\left[ \xi^n(\Ubb^n)\right] \right|
    \le \left|\frac{1}{n} \sum_{i=1}^n\E^{\P}\left[ \xi(X^i,\overline{\eta}^n)\right] \right| \le C \sup_{n \ge 1} \left( 1 + \frac{1}{n} \sum_{i=1}^n\E^{\P}\left[ \psi(X^i)\right] + \frac{1}{n} \sum_{i=1}^n \E^{\P^n}\left[\sup_{t \in [0,T]} |U^i_t|^p \right]  \right) 
    \\
    &\le C \sup_{n \ge 1} \left( 1 + \frac{1}{n} \sum_{i=1}^n\E^{\P}\left[ |L^i_T|^2\right]^{1/2} \E^{\P}\left[ |\psi(U^i)|^2\right]^{1/2} + \frac{1}{n} \sum_{i=1}^n \E^{\P}\left[\sup_{t \in [0,T]} |X^i_t|^p \right]  \right) < \infty
\end{align*}
for a constant $C >0$ independent of $n$. By the assumptions associated to $\widehat{L}$, we have
\begin{align*}
    J^{\Cf^n}_{n}(\alphab) \le \E\left[\widehat{Y}^n_0
        \right] &= \E^{\P^n} \left[ \frac{1}{n} \sum_{i=1}^n\int_0^T \widehat{L}\left(t,U^{i}_t,\nu^n,\aleph^{i,n}(t,\Ubb^n),n \widehat{Z}^i_t \right)\;\mathrm{d}t +\xi^n(\Ubb^n) \right]
        \\
        &\le C\;\E^{\P^n} \left[\frac{1}{n} \sum_{i=1}^n \int_0^T |U^i_t|^p - |\aleph^{i,n}(t,\Ubb^n)|^2 - |n \widehat{Z}^i_t|^2 \mathrm{d}t  \right] + \sup_{n' \ge 1}\E^{\P^{n'}}\left[ \xi^{n'}(\Ubb^{n'})\right],
\end{align*}
for some constant $C > 0$ independent of $n$.  We have necessarily 
\begin{align} \label{eq:bounded_opti}
    \sup_{n \ge 1}  \E^{\P^n} \left[\frac{1}{n} \sum_{i=1}^n \int_0^T |n \widehat{Z}^i_t|^2 \mathrm{d}t  \right] < \infty.
\end{align}
Indeed, if it is not the case, there exists $(n_\ell)_{\ell \ge 1} \subset \N^\star$ s.t. $\lim_{\ell \to \infty} n_\ell=\infty$ and 
$$
    \lim_{\ell \to \infty}\E^{\P^n} \left[\frac{1}{n_\ell} \sum_{i=1}^{n_\ell} \int_0^T |n_\ell \widehat{Z}^i_t|^2 \mathrm{d}t  \right]=\infty.
$$
By the convergence in \eqref{eq:cong_AnyControl}, we will have a contradiction. Also, since \Cref{assump:growth_coercivity1} is satisfied and $b$ is bounded,
\begin{align*}
    &\frac{1}{n} \sum_{i=1}^n\;\;h\left(t,U^{i}_t,\nu^n,\aleph^{i,n}(t,\Ubb^n),n \widehat{Z}^{i,n}_t,\widetilde{\alpha}^n(t,U^i_t) \right)-  H\left(t,U^{i}_t,\nu^n,\aleph^{i,n}(t,\Ubb),n \widehat{Z}^{i,n}_t \right) 
    \\
    &\ge   -  C \frac{1}{n} \sum_{i=1}^n \left( |U^i_t|^p + |\widetilde{\alpha}^n(t,U^i)|^2 + |n \widehat{Z}^i_t|  - |n \widehat{Z}^i_t|^2 \right),
\end{align*} 
for some constant $C >0$ independent of $n$. Notice that
\begin{align*}
    &J^{\Cf^n}_{n}(\alphab)
    \\
    &= \E^{\P^{\alpha}} \left[ \widehat{Y}^n_0 + \frac{1}{n} \sum_{i=1}^n\int_0^T h\left(t,U^{i}_t,\nu^n,\aleph^{i,n}(t,\Ubb^n),n \widehat{Z}^{i,n}_t,\widetilde{\alpha}^n(t,U^i) \right)\; -  H\left(t,U^{i}_t,\nu^n,\aleph^{i,n}(t,\Ubb),n \widehat{Z}^{i,n}_t \right)\;\mathrm{d}t \right] 
    \\
    &\le \E\left[\widehat{Y}^n_0
        \right]. 
\end{align*}
Since $\sup_{n \ge 1}\E\left[\widehat{Y}^n_0
        \right]< \infty$. We deduce that
\begin{align} \label{eq:bounded_alpha}
    \sup_{n \ge 1}  \E^{\P^\alpha} \left[\frac{1}{n} \sum_{i=1}^n \int_0^T |n \widehat{Z}^i_t|^2 \mathrm{d}t  \right] < \infty.
\end{align}

Consequently, by classical techniques (see for instance \cite{djete2019general}), the sequences $(\widehat{\Qr}_n)_{n \ge 1}$ and $(\widehat{\Pr}_n)_{n \ge 1}$  are relatively compact in $\Wc_r$ for any $1 \le r <2$ and bounded in $\Wc_2$, and $({\Qr}_n)_{n \ge 1}$ and $({\Pr}_n)_{n \ge 1}$  are relatively compact for the weak convergence and bounded in $\Wc_1$ where: $\Gamma^i:=\delta_{n \widehat{Z}^i_t}(\mathrm{d}z)\mathrm{d}t$, $\nuh^n:=\frac{1}{n} \sum_{i=1}^n \delta_{(U^i,\Gamma^i)}$,
\begin{align*}
    \widehat{\Qr}_n
    :=
    \frac{1}{n}\sum_{i=1}^n\Lc^{\P^\alpha}\left(U^i, \Gamma^i, \nu^n,\nuh^n\right),\;\Qr_n
    :=
    \frac{1}{n}\sum_{i=1}^n\Lc^{\P^\alpha}\left(U^i, \Gamma^i, \nu^n,\nuh^n,\widehat{Y}^n_0, M^{n}  \right)
\end{align*}
and 
\begin{align*}
    \widehat{\Pr}_n
    :=
    \frac{1}{n}\sum_{i=1}^n\Lc^{\P^n}\left(U^i, \Gamma^i, \nu^n,\nuh^n\right),\;\Pr_n
    :=
    \frac{1}{n}\sum_{i=1}^n\Lc^{\P^n}\left(U^i, \Gamma^i, \nu^n,\nuh^n,\widehat{Y}^n_0  \right).
\end{align*}

$\boldsymbol{\rm Step\;3:\;From\;the\;n\mbox{--}player\;approximation\;viewpoint\;to\;(back\;to)\;the\;limit}$
Let $\Qr$ and ${\Pr}$ be the limits of convergent sub--sequences of $({\Qr}_n)_{n \ge 1}$ and $({\Pr}_n)_{n \ge 1}$ . We use the same notation for the sequence and the convergent sub--sequence. By using some classical weak convergence result for McKean--Vlasov process (see for instance \cite{djete2019general}) and \Cref{prop:weak_borel}, we get ${\Pr}=\Lc(X,\Gamma, \mu, \muh, Y_0)$ where: $X_0=\iota$, $(\muh,Y_0)$ and $W$ are independent, $\Hc_t:=\sigma( \muh \circ (\Xh_{t \wedge \cdot}, \widehat{\Gamma}_{t \wedge \cdot})^{-1}, Y_0)$,
\begin{align*}
    \;\mathrm{d}X_t
    =
    \int_\R \widehat{b}\left(t, X_t, \mu, \aleph(t,X_t), z \right) \Gamma_t(\mathrm{d}z) \mathrm{d}t + \sigma(t,X_t) \mathrm{d}W_t\;\mbox{with}\;\muh=\Lc(X,\Lambda|\Hc_T),\;\mub=\Lc(X|\Hc_T),\;\mu_t=\Lc(X_t|\Hc_t).
\end{align*}
In addition, $\Qr=\Lc(X^\alpha, \Gamma^\alpha, \mu^\alpha, \muh^\alpha, Y^\alpha_0, M^\alpha)$ where $\mu^\alpha=(\Lc(X^\alpha_t))_{t \in [0,T]}$, $\muh^\alpha=\Lc(X^\alpha,\Gamma^\alpha|\Hc^\alpha_T)$ where $\Hc^\alpha_t:=\sigma( \muh \circ (\Xh_{t \wedge \cdot}, \widehat{\Gamma}_{t \wedge \cdot})^{-1}, Y^\alpha_0)$ with $X^\alpha_0$, $(\muh^\alpha,Y^\alpha_0)$ and $W$ independent. Also, $\Lc(X^\alpha| \Hc^\alpha_T)=\Lc(X^\alpha)$. By \Cref{eq:bounded_opti} and \Cref{eq:bounded_alpha}, we can show that 
\begin{align*}
    \lim_{n \to \infty}\E^{\P^{\alpha}} \left[ \left|\sum_{i=1}^n\int_0^T \widehat{Z}^{i,n}_t \mathrm{d}W^{\alpha,i}_t  \right|^2\right] = \lim_{n \to \infty}\E^{\P^{n}} \left[ \left|\sum_{i=1}^n\int_0^T \widehat{Z}^{i,n}_t \mathrm{d}\widehat{W}^{i}_t  \right|^2\right]=0.
\end{align*}
It is easy to verify that $\E[Y_0]=\E[Y^\alpha_0]$.
By using \Cref{eq:equation_M} and \Cref{eq:equation_M2}, and by some localisation techniques as in the proof of \Cref{prop:weak_borel}, since $H(s,x,\pi,e,z) - h(s,x,\pi,e,z,\alpha(s,x)) \ge 0$ by definition of the Hamiltonian, we characterize the limit and obtain: for each $t \le T$,
\begin{align*}
    \E \left[ M^\alpha_t \right]
    & \le 
    \E \bigg[ Y^\alpha_0 -\int_0^t H(s,X^\alpha_s,\mu^\alpha,\aleph(s,X^\alpha_s), z) 
    \\
    &~~~~~~~~~~+\int_0^t h(s,X^\alpha_s, \mu^\alpha, \aleph(s,X^\alpha_s), z, \alpha(s,X^\alpha)) \;\;\Gamma^\alpha_s(\mathrm{d}z) \mathrm{d}s \bigg]
\end{align*}
and also
\begin{align*}
    \E[Y^\alpha_0] \le \E \left[ \xi(X,\mub) + \int_0^T \int_\R \widehat{L} \left(t, X_t,\mu, \aleph(t,X_t), z \right) \Gamma_t(\mathrm{d}z) \mathrm{d}t\right].
\end{align*}
By the convergence of \Cref{eq:cong_AnyControl} and \Cref{eq:ineq_opti}, we get
\begin{align*}      {J}^{\Cf}_A(\alpha)&=\E[M^\alpha_T]= \lim_{n \to \infty}\E \left[ \frac{1}{n} \sum_{i=1}^n \int_0^T L\left(t,X^i_t,\mu^n,\widetilde{\aleph}^n(t,X^i_t), \widetilde{\alpha}^n(t,X^i) \right) \mathrm{d}t +  \frac{1}{n} \sum_{i=1}^n \widetilde{\xi}^n(X^i, \mub^n) \right]
    \\
    &=\lim_{n \to \infty} \E^{\P^{\alphab}}[M^n_T].
    %\le \liminf_{n \to \infty} \E^{\P^n}[\widehat{Y}^n_0] = \liminf_{n \to \infty} \E^{\P^{\alphab}}[\widehat{Y}^n_0] \le  \limsup_{n \to \infty} \E^{\P^n}[\widehat{Y}^n_0] = \limsup_{n \to \infty} \E^{\P^{\alphab}}[\widehat{Y}^n_0]
    %\\
    %&\le \E \left[ \xi(X,\mu) + \int_0^T \int_\R \widehat{L} \left(t, X_t,\mu, \aleph(t,X_t), z \right) \Gamma_t(\mathrm{d}z) \mathrm{d}t\right]. 
\end{align*}
To conclude the first point, we need to verify that $\E[Y_0] = \sup_{\alpha' \in {\Ac}(\Cf)} {J}^\Cf_A(\alpha')$. Since our previous result is true for any $\alpha \in {\Ac}(\Cf)$, we can see that $\sup_{\alpha' \in {\Ac}(\Cf)} {J}^\Cf_A(\alpha') \le \E[Y_0]$.
By \Cref{prop:weak_appr}, there exists a sequence $(\alpha^{\ell,u})_{\ell \ge 1, u \in [0,1]} \subset {\Ac}(\Cf)$ s.t. $u \mapsto {J}^\Cf_A(\alpha^{\ell,u}) $ is Borel measurable and 
$$
    \lim_{\ell \to \infty} \int_0^1 {J}^\Cf_A(\alpha^{\ell,u}) \mathrm{d}u = \E \left[ \xi(X,\mub) + \int_0^T \int_\R \widehat{L} \left(t, X_t,\mu, \aleph(t,X_t), z \right) \Gamma_t(\mathrm{d}z) \mathrm{d}t\right].
$$
Therefore $\sup_{\alpha' \in {\Ac}(\Cf)} {J}_\Cf(\alpha') \ge \E[Y_0]$, hence the equality.

\medskip
$\boldsymbol{(ii)}$ Let us assume that $\alpha$ is optimal. By the previous result, we obtain 
\begin{align*}
    \E \left[ \int_0^T \int_{\R} h\left(t,X^{\alpha}_t,\mu^\alpha,\aleph(t,X^\alpha_t),z,\alpha(t,X^\alpha) \right)\; -  H\left(t,X^{\alpha}_t,\mu^\alpha,\aleph(t,X^\alpha_t),z \right)\; \Gamma^\alpha(t,X^\alpha)(\mathrm{d}z)\mathrm{d}t \right] =0.
\end{align*}
By definition of $h$, we deduce that $\P$--a.e. for $\Gamma^\alpha(t,X^\alpha)(\mathrm{d}z) \mathrm{d}t$--a.e. $(t,z)$
\begin{align*}
    h\left(t,X^{\alpha}_t,\mu,\aleph(t,X^\alpha_t),z,\alpha(t,X^\alpha) \right)= H\left(t,X^{\alpha}_t,\mu,\aleph(t,X^\alpha_t),z \right).
\end{align*}
By uniqueness of the maximizer of the Hamiltonian, we deduce that necessarily there exists a progressively Borel map $\gamma^\alpha:[0,T] \x C([0,T];\R) \to \R$ verifying $\Gamma^\alpha(t,X^\alpha)(\mathrm{d}z)\mathrm{d}t=\delta_{\gamma(t,X^\alpha)}(\mathrm{d}z)\mathrm{d}t$. This leads to obtain that $\alpha(t,X^\alpha)=\widehat{\alpha}\left(t,X^\alpha_t, \mu^\alpha, \aleph(t,X^\alpha_t), \gamma(t,X^\alpha) \right)$. This is enough to deduce the result.
\end{proof}

\paragraph*{Principal--agent problem with McKean--Vlasov dynamics} 
We recall the definition of $V_{\Pr}$ and $\overline{V}_{\Pr}$ are given in \Cref{eq:value_PA_McKean}.
%\begin{proposition} \label{prop:PA_Mckean}
    %Under {\rm \Cref{assum:main1}}, {\rm \Cref{assump:growth_coercivity}} and {\rm \Cref{assump:growth_coercivity1}}, we have $V_{\Pr}={V}^{\rm ext}_{\Pr} = U(\widehat{V}).$
    %In addition, if $b$ is bounded, we have 
    %\begin{align*}
     %   \overline{V}_{\Pr}=\overline{V}^{\rm ext}_{\Pr}
     %   = U(\widehat{V}).
    %\end{align*}
%\end{proposition}

\subsubsection{Proofs of \Cref{thm:PA_McK_reduced} and \Cref{thm:PA_McK_general}}

\begin{proof}
$\boldsymbol{\rm Proof\;of\; \Cref{thm:PA_McK_reduced}}$
     Let $(\aleph,\gamma)$ be bounded control for $\widehat{V}$. We set 
     $$
        \overline{\xi}(\mub):=g^{-1}\left(\mu,\int_{C([0,T];\R)} \xi(x,\mub) \mub(\mathrm{d}x) \right)
    $$
    with: $Y^{\gamma,\aleph}_0 \ge R$, for each $x$ s.t. $\int_0^T \gamma(t,x_t) \sigma(t,x_t)^{-1} \mathrm{d}x_t$ is well--defined,
\begin{align*}
    \xi(x,\mub)
    :=
    Y_0
    -
    \int_0^T  H (t,x, \mu,\aleph(t,x_t), \gamma(t,x_t)) \; \mathrm{d}t + \int_0^T \gamma(t,x_t) \sigma(t,x_t)^{-1} \mathrm{d}x_t
\end{align*} 
and we fix $\xi(x,\mub)=0$ otherwise. We have 
\begin{align*}
    \sup_{(x,\mu)} \frac{|\xi(x,\mub)|}{1 + \mub + \int_0^T |x_t|^p  + |\aleph(t,x_t)|^2 + |\gamma(t,x_t)|^2 \mathrm{d}t + \left| \int_0^T \gamma(t,x_t) \sigma(t,x_t)^{-1} \mathrm{d}x_t \right|} < \infty 
\end{align*}
with $\E\left[ \int_0^T |U_t|^p  + |\aleph(t,U_t)|^2 + |\gamma(t,U_t)|^2 \mathrm{d}t + \left| \int_0^T \gamma(t,U_t) \sigma(t,U_t)^{-1} \mathrm{d}U_t \right|^2 \right] < \infty$. Consequently, $\Cf=(\xi,\aleph)$ belongs to $\Xi$, and given its shape it belongs to $\widehat{\Xi}$. We have then $\alpha \in {\Ac}^\star(\Cf)$ where $\alpha(t,x):=\widehat{\alpha}(t,x,\mu, \aleph(t,x), \sigma(t,x)^{-1} \gamma(t,x))$ with $\mu=(\Lc(X_t)=\mu_t)_{t \in [0,T]}$, $X_0=\iota$ and
    $$
        \mathrm{d}X_t
    =
    \widehat{b}\left(t,X_t,\mu,\aleph(t,X_t), \gamma(t,X_t) \right) \mathrm{d}t + \sigma(t,X_t) \mathrm{d}W_t.
    $$
Also,  $\mub^\alpha=\Lc(X^\alpha)=\mub$ and
\begin{align*}
    \E[\xi(X^\alpha,\mub^\alpha)]
    =
    Y^{\gamma,\aleph}_0
    -
    \E \left[\int_0^T  \widehat{L} (t,X^\alpha_t, \mu^\alpha,\aleph(t,X^\alpha_t), \gamma(t,X^\alpha_t)) \; \mathrm{d}t \right].
\end{align*}
We deduce that $U(\widehat{J}(\aleph,\gamma)) \le {V}_{\Pr}$. Since the supremum of $\widehat{V}$ can be taken over the set of bounded $(\aleph,\gamma)$, therefore $U(\widehat{V}) \le {V}_{\Pr}$ (recall that $U$ is non--decreasing). For any $\Cf=(\xi,\aleph) \in \widehat{\Xi}$ associated to $\gamma$ and $Y_0 \ge R$, and $\alpha \in {\Ac}^\star(\Cf)$, by \Cref{prop:characterization_control}, we deduce that $U(\widehat{V}) \ge {V}_{\Pr}$. %Obviously, we have $V_{\Pr} \le V^{\rm ext}_{\Pr}$. For any $\Cf \in \widehat{\Xi}$ and $\alpha \in \widetilde{\Ac}^\star(\Cf)$, by conditioning to $\Gc^\alpha_T$, we find the other side i.e. $V_{\Pr} \ge V^{\rm ext}_{\Pr}$. We can deduce the first result.

\medskip
$\boldsymbol{\rm Proof \;of\;\Cref{thm:PA_McK_general}}$
    Now, we have that $b$ is bounded. It is obvious that ${V}_{\Pr} \le \overline{V}_{\Pr}$. Let $\Cf \in \Xi$ and $\alpha \in {\Ac}^\star(\Cf)$. By \Cref{prop:PA_Mckean_charac}, there exists $\gamma \in {\Mc}_0$ s.t. $\mub^\alpha=\Lc(X^\alpha)$,
    \begin{align*}
        \mathrm{d}X^{\alpha}_t
        = \widehat{b}\left(t,X^\alpha_t,\mu^\alpha,\aleph(t,X^\alpha_t), \gamma(t,X^\alpha) \right) \mathrm{d}t + \sigma(t,X^{\alpha}_t) \mathrm{d}W_t
    \end{align*}
    and
    \begin{align*}
        \E \left[\xi(X^\alpha, \mub^\alpha ) \right]
        &=
        \E \left[Y_0
        -
        \int_0^T \widehat{L}\left(t,X^\alpha_t,\mu^\alpha,\aleph(t,X^\alpha_t), \gamma(t,X^\alpha) \right) \mathrm{d}t\right].
    \end{align*}
    By using \Cref{prop:markovian_appr} in order to approximate through Markovian controls, since $Y_0 \ge R$, we find $\overline{V}_{\Pr} \le U(\widehat{V})$. %The equality $\overline{V}^{\rm ext}_{\Pr}=V_{\Pr}$ follows also by similar arguments as in the first equality.
    We can then deduce the result.
\end{proof}

%\subsubsection{Proofs of \Cref{thm:PA_McK_reduced} and \Cref{thm:PA_McK_general}}

%It is essentially an application of \Cref{prop:PA_Mckean}. The first point of \Cref{prop:PA_Mckean} leads to \Cref{thm:PA_McK_reduced} and the second point to \Cref{thm:PA_McK_general}.

\subsubsection{Proof of \Cref{prop:PA_McKean_construction}}

It follows the same idea as in the proof of \Cref{thm:PA_McK_reduced}. If we show that the contract proposed is admissible, we can deduce the result since its optimality becomes straightforward. For the admissibility, we easily check that 
\begin{align*}
    \sup_{(x,\mu)} \frac{|\xi(x,\mub)|}{1 + \|\mub\|_p^p + \int_0^T |x_t|^p  + |\aleph(t,x_t)|^2 + |\gamma(t,x_t)|^2 \mathrm{d}t + \left| \int_0^T \gamma(t,x_t) \sigma(t,x_t)^{-1} \mathrm{d}x_t \right|} < \infty 
\end{align*}
with $\E\left[ \int_0^T |U_t|^p  + |\aleph(t,U_t)|^2 + |\gamma(t,U_t)|^2 \mathrm{d}t + \left| \int_0^T \gamma(t,U_t) \sigma(t,U_t)^{-1} \mathrm{d}U_t \right|^2 \right] < \infty$.

\subsection{Proof of \Cref{prop:multitask}}

Let us first recall that by \Cref{prop:reformulation_principal}, the problem of the Principal is a stochastic control problem i.e. $V_{n,\Pr}=\widehat{V}_{n,\Pr}$. Since there is no instantaneous payment, the notations $\Cf^n=(\xi^n,\aleph^n)$, $\If^n=(\Zbb,\alephbb)$ and $(\gamma,\aleph)$ will be replaced everywhere by just $\xi^n$, $\Zbb$ and $\gamma$. In order to highlight the dependence of $\overline{b}$, all the variables $(V_{n,\Pr},\widehat{V}_{n,\Pr}, \widehat{V}, X^{i,\Zbb}, X^{\gamma}) $ will be denoted $(V^{\overline{b}}_{n,\Pr}, \widehat{V}^{\overline{b}}_{n,\Pr}, \widehat{V}^{\overline{b}}, X^{\overline{b},i,\Zbb}, X^{\overline{b},\gamma})$. When $\overline{b}=\infty$, this will mean that $b(x)=x$.  %we denote $V_{n,\Pr}^{\overline{b}}$ instead of just $V_{n,\Pr}$ and, $\widehat{V}^{\overline{b}}$ instead of just $\widehat{V}$.
We now recall some estimates of SDEs which are essentially successive applications of the Gronwall's Lemma (see for instance \cite[Lemma 4.1]{djete2019general}). By similar ideas to \cite{MFD-2020-closed}, the optimisation problems $\widehat{V}_{n,\Pr}^{\overline{b}}$ and $\widehat{V}^{\overline{b}}$ can be taken over open--loop controls and not closed--loop controls. From now on, all the controls will be assumed of open--loop type. Let $p >1$. Since $|b(x)| \le |x|$ for each $x$, there exists $C>0$ independent of $n$ and $\overline{b}$, for any $\Zbb=(Z^1,\cdots,Z^n)$ and $\gamma$, we have
\begin{align*}
    \frac{1}{n} \sum_{i=1}^n \E \left[ \sup_{t \in [0,T]} |X^{\overline{b},i,\Zbb}_t|^p \right] \le C \left( 1 + \E[|\iota|^p] + \frac{1}{n} \sum_{i=1}^n \E \left[ \int_0^T |n Z^i_t|^p \mathrm{d}t \right] \right)
\end{align*}
and
\begin{align*}
    \E \left[ \sup_{t \in [0,T]} |X^{\overline{b},\gamma}_t|^p \right] \le C \left( 1 + \E[|\iota|^p] + \E \left[ \int_0^T |\gamma_t|^p \mathrm{d}t \right] \right).
\end{align*}
By using the fact that $|b(x)-b(y)| \le |x-y|$, we also find by Gronwall's Lemma (recall that $\Zbb$ and $\gamma$ are open--loop),
\begin{align*}
    \frac{1}{n} \sum_{i=1}^n \E \left[ \sup_{t \in [0,T]} |X^{\overline{b},i,\Zbb}_t-X^{\infty,i,\Zbb}_t| \right] \le C \left( \frac{1}{n} \sum_{i=1}^n \E \left[ \int_0^T |X^{\infty,i,\Zbb}_t| \1_{ \{ |X^{\infty,i,\Zbb}_t| > \overline{b}\} } \mathrm{d}t \right] \right)
\end{align*}
and
\begin{align*}
    \E \left[ \sup_{t \in [0,T]} |X^{\overline{b},\gamma}_t-X^{\infty,\gamma}_t| \right] \le C \left( \E \left[ \int_0^T |X^{\infty,\gamma}_t| \1_{ \{ |X^{\infty,\gamma}_t| > \overline{b}\} } \mathrm{d}t \right] \right).
\end{align*}
By the coercivity of the reward in the optimization problems $\widehat{V}_{n,\Pr}^{\overline{b}}$ and $\widehat{V}^{\overline{b}}$, with similar ideas to \Cref{prop:coercivity} and see also \cite[Proposition 4.17]{djete2019general}, we can find a constant $K$ independent of $n$ and $\overline{b}$ s.t. the problem $\widehat{V}_{n,\Pr}^{\overline{b}}$ can be taken over $\Zbb$ verifying $\frac{1}{n} \sum_{i=1}^n \E \left[ \int_0^T |n Z^i_t|^2 \mathrm{d}t \right] \le K$ and, the problem $\widehat{V}^{\overline{b}}$ can be taken over $\gamma$ verifying $\E \left[ \int_0^T |\gamma_t|^2 \mathrm{d}t \right] \le K$. Therefore, there exists $C > 0$ independent of $n$ and $\overline{b}$ s.t. 
\begin{align*}
    |\widehat{V}_{n,\Pr}^{\overline{b}} - \widehat{V}_{n,\Pr}^{\infty}| + |\widehat{V}^{\overline{b}} - \widehat{V}^{\infty}| \le \sup_{\Zbb} \frac{1}{n} \sum_{i=1}^n \E \left[ \sup_{t \in [0,T]} |X^{\overline{b},i,\Zbb}_t-X^{\infty,i,\Zbb}_t| \right] + \sup_{\gamma} \E \left[ \sup_{t \in [0,T]} |X^{\overline{b},\gamma}_t-X^{\infty,\gamma}_t| \right] \le C\overline{b}^{-1}.
\end{align*}
Recall that $\mathrm{d}X^{\infty,\gamma}_t=(\gamma_t + \overline{\kappa} \E[X^{\infty,\gamma}_t]) \mathrm{d}t + \mathrm{d}W_t$. By simple calculations, $\E[X^{\infty,\gamma}_t]= e^{\overline{\kappa}t} \E[\iota] + \int_0^t e^{\overline{k}(t-s)} \E[\gamma_s] \mathrm{d}s$.  Consequently,
\begin{align*}
    \widehat{V}^{\infty} = \sup_{\gamma} \E \left[ -R + e^{\overline{\kappa}T} \iota + \int_0^T e^{\overline{k}(T-t)} \gamma_t - \frac{1}{2} \int_0^T |\gamma_t|^2 \mathrm{d}t \right].
\end{align*}
The supremum is then reached at $\widehat{\gamma}_t=e^{\overline{k}(T-t)}$. Then, $|\widehat{V}^{\overline{b}} - \widehat{J}^{\overline{b}}(\widehat{\gamma})| \le |\widehat{V}^{\overline{b}} - \widehat{V}^{\infty}| + |\widehat{J}^{\infty}(\widehat{\gamma}) - \widehat{J}^{\overline{b}}(\widehat{\gamma})| \le C \overline{b}^{-1}.$ With $\widetilde{V}_{n,\Pr}^{\infty}:=\sup_{\Zbb} \E \left[ U \left( \frac{1}{n} \sum_{i=1}^n X^{\infty,i,\Zbb}_T - R - \int_0^T |n Z^i_t|^2 \mathrm{d}t \right) \right]$, remember that we are taking the supremum over $\frac{1}{n} \sum_{i=1}^n \E \left[ \int_0^T |n Z^i_t|^2 \mathrm{d}t \right] \le K$, by using the fact that $U$ is Lipschitz, we have $|\widehat{V}_{n,\Pr}^{\infty}-\widetilde{V}_{n,\Pr}^{\infty}| \le C n^{-1/2}$. We recall that $\mathrm{d}X^{\infty,i,\Zbb}_t=(nZ^i_t +  \frac{\overline{\kappa}}{n} \sum_{j=1}^n X^{\infty,j,\Zbb}_t) \mathrm{d}t + \mathrm{d}W_t$. By simple calculations again, $\frac{1}{n} \sum_{j=1}^n X^{\infty,j,\Zbb}_t= e^{\overline{\kappa}t} \frac{1}{n} \sum_{j=1}^n \iota^j + \int_0^t e^{\overline{\kappa}(t-s)} \frac{1}{n} \sum_{j=1}^n n Z^{j}_s \mathrm{d}s + \int_0^t e^{\overline{\kappa}(t-s)} \frac{1}{n} \sum_{j=1}^n \mathrm{d}W^j_s.$
We obtain $\widetilde{V}_{n,\Pr}^{\infty}:=\sup_{\Zbb} \widetilde{J}^{\infty}_{n,\Pr}(\Zbb)$ where
\begin{align*}
    \widetilde{J}^{\infty}_{n,\Pr}(\Zbb):= \E \left[ U \left( e^{\overline{\kappa}T} \frac{1}{n} \sum_{j=1}^n \iota^j + \int_0^T e^{\overline{\kappa}(T-t)} \frac{1}{n} \sum_{j=1}^n n Z^{j}_t \mathrm{d}t + \int_0^T e^{\overline{\kappa}(T-t)} \frac{1}{n} \sum_{j=1}^n \mathrm{d}W^j_t - R - \int_0^T \frac{1}{n} \sum_{j=1}^n|n Z^i_t|^2 \mathrm{d}t \right) \right].
\end{align*}
Since $U$ is non--decreasing, we compute its supremum and find $\widehat{Z}^i_t=e^{\overline{\kappa}(T-t)}$. Then, by combining all the previous results, 
$$
    |V_{n,\Pr}^{\overline{b}}- \widehat{J}_{n,\Pr}^{\overline{b}}(\widehat{\Zbb})| \le |V_{n,\Pr}^{\overline{b}}-\widehat{V}_{n,\Pr}^{\infty}| + | \widehat{V}_{n,\Pr}^{\infty}- \widehat{J}_{n,\Pr}^{\overline{b}}(\widehat{\Zbb})|  \le |V_{n,\Pr}^{\overline{b}}-\widehat{V}_{n,\Pr}^{\infty}| + | \widehat{V}_{n,\Pr}^{\infty}-\widetilde{V}_{n,\Pr}^{\infty} |+ |\widetilde{J}^{\infty}_{n,\Pr}(\widehat{\Zbb})- \widehat{J}_{n,\Pr}^{\overline{b}}(\widehat{\Zbb})| \le C(n^{-1/2} + b^{-1}).
$$
The calculations of the contract and the utility of the principal are straightforward. We can deduce the proof of the proposition.
%Let us start by recalling some estimates of SDEs. It is essentially

%\subsubsection{Proof of \Cref{prop:PA_Mckean_charac}}

%This is a direct application of \Cref{prop:characterization_control_general}.

\begin{appendix}

\section{Some technical results}

\medskip
Let $(\delta_n)_{n \ge 1}$ be a non--negative sequence s.t. $\lim_{n \to \infty} \delta_n=0$. We consider the sequence $(\If^n=\alephbb^n,\Zbb^n)_{n \ge 1}$ s.t. $\If^n \in \Sc_n$ satisfying: for each $n \ge 1$,
\begin{align*}
    \widehat{J}_{n,\Pr} (\If^n) \ge \widehat{V}_{n,\Pr} - \delta_n.
\end{align*}
\begin{proposition} \label{prop:coercivity}
    Under {\rm\Cref{assum:main1}} and {\rm \Cref{assump:growth_coercivity1}}, the sequence $(\If^n)_{n \ge 1}$ previously considered satisfied
    \begin{align*}
        \sup_{n \ge 1} \E \left[ |Y^n_0| + \frac{1}{n} \sum_{i=1}^n \int_0^T |nZ^{i,n}_t|^{2} + |\aleph^{i,n}(t,\Xbb^n)|^{2} \mathrm{d}t \right] < \infty.
    \end{align*}
\end{proposition}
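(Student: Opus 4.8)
This is a coercivity estimate: a $\delta_n$--optimal contract cannot load arbitrarily large effort or continuation value without destroying the value of the (coercive) objective. Write $I^n := \frac1n\sum_{i=1}^n\int_0^T\big(|nZ^{i,n}_t|^2 + |\aleph^{i,n}(t,\Xbb^n)|^2\big)\,\mathrm{d}t$ and $K^n := (Y^n_0 - R) + I^n\ge 0$; since $Y^n_0 \ge R$ a.e. one checks $|Y^n_0| \le Y^n_0 + 2|R| = K^n - I^n + R + 2|R| \le K^n + 2|R|$, so it is enough to prove $\sup_{n\ge1}\E[K^n] < \infty$. First I would bound from above the random variable $\Xi^n$ appearing inside $U$ in $\widehat J_{n,\Pr}(\If^n) = \E[U(\Xi^n)]$. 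Using $|\Upsilon(x)| \le \overline{c}(1+|x|^p)$, the lower bound $\widehat g_{\Pr}(\pi,y) \ge \widehat{c}\,y - \overline{c}(1+\sup_s\|\pi(s)\|_p^p)$ and $L_{\Pr}(t,e) \ge \widehat{c}|e|^2 - \overline{c}$ from \Cref{assump:growth_coercivity1}, together with $\sup_{t\le T}\|\mu^n_t\|_p^p \le G^n := \frac1n\sum_{i}\sup_{t\le T}|X^i_t|^p$, gives $\Xi^n \le C(1+G^n) - \widehat{c}\,Y^n_T$. Next, from the state/continuation equation in the definition of $\Sc_n$ and the upper bound $\widehat L(t,x,\pi,e,z) \le \overline{c}(1+|x|^p+\sup_s\|\pi(s)\|_p^p) - \widehat{c}(|e|^2+|z|^2)$, one obtains $Y^n_T \ge Y^n_0 - C(1+G^n) + \widehat{c}\,I^n + M^n_T$ with $M^n_t := \sum_i\int_0^t Z^{i,n}_s\,\mathrm{d}W^i_s$ a true martingale (since $Z^{i,n}\in\H$, so $\E[\langle M^n\rangle_T]<\infty$ and $\E[M^n_T]=0$). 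Combining, and writing $-\widehat{c}Y^n_0 = -\widehat{c}(Y^n_0-R)-\widehat{c}R$, one gets for $C_2 := \widehat{c}\wedge\widehat{c}^2 > 0$
\[
\Xi^n \;\le\; C_1(1+G^n) - C_2\,K^n - \widehat{c}\,M^n_T .
\]

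Then I would control $\E[G^n]$. Because $b$ has linear growth in $(a,e)$ (\Cref{assum:main1}) and $\widehat\alpha$ has linear growth (\Cref{assump:growth_coercivity}), the drift $\widehat b$ has linear growth in $(x,\pi,e,z)$, so standard McKean--Vlasov SDE estimates (a stopping--time localization, Gronwall and Fatou) yield $\E[G^n] \le C\big(1 + \E[|\iota|^p] + \E[\tfrac1n\sum_i\int_0^T(|\aleph^{i,n}(t,\Xbb^n)|^p + |nZ^{i,n}_t|^p)\,\mathrm{d}t]\big)$, the right side being finite for each fixed $n$ because $\aleph^{i,n}\in\H_{\exp}$ and $Z^{i,n}\in\H$. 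Here the hypothesis $p<2$ is decisive: by Young's inequality $|a|^p \le \eps|a|^2 + C_\eps$, so $\E[G^n] \le C_\eps' + C\eps\,\E[I^n] \le C_\eps' + C\eps\,\E[K^n]$ for every $\eps>0$.

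Finally I would use the concavity of $U$. Since $U$ is concave, non--decreasing and $\lim_{k\to-\infty}U(k)=-\infty$, there are $k_0\in\R$ and $\lambda_0>0$ with $U(k) \le U(k_0) + \lambda_0(k-k_0)$ for all $k$. A uniform lower bound $\inf_n\widehat V_{n,\Pr} \ge -C$ holds: evaluate $\widehat J_{n,\Pr}$ at the $n$--particle version of a fixed bounded Lipschitz $(\gamma,\aleph)\in\Ac$ — which lies in $\Sc_n$, as in the proof of \Cref{prop:conv-principal} — and note that by propagation of chaos the random variables involved have moments bounded uniformly in $n$, while $U$ has linear growth, so these values are bounded below uniformly in $n$. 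Taking expectations in the supergradient inequality and using $\E[M^n_T]=0$,
\[
-C - \delta_n \;\le\; \widehat V_{n,\Pr} - \delta_n \;\le\; \widehat J_{n,\Pr}(\If^n) = \E[U(\Xi^n)] \;\le\; \big(U(k_0)-\lambda_0 k_0\big) + \lambda_0\big(C_1(1+\E[G^n]) - C_2\,\E[K^n]\big).
\]
Inserting the bound $\E[G^n] \le C_\eps' + C\eps\,\E[K^n]$ and choosing $\eps$ so that $\lambda_0 C_1 C\eps \le \tfrac12\lambda_0 C_2$ absorbs the $\E[K^n]$ term on the right, leaving $\tfrac12\lambda_0 C_2\,\E[K^n] \le C''$ with $C''$ independent of $n$; this is the claim.

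\textbf{Main obstacle.} The delicate point is the interplay of constants: the quadratic coercivity (the $\widehat{c}$'s coming from $\widehat L$, $\widehat g_{\Pr}$, $L_{\Pr}$) must dominate the $p$--growth coming from $\Upsilon$, from the upper bound on $\widehat L$, and — crucially — from the feedback of the effort and control into the state moment $G^n$. This is exactly what $1\le p<2$ buys, letting Young's inequality absorb the $p$--growth of the state into an arbitrarily small multiple of the quadratic term $I^n\le K^n$. The remaining technical points are the genuine martingale property of $M^n$ (so that its expectation vanishes) and the $n$--uniform lower bound on $\widehat V_{n,\Pr}$.
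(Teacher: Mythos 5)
Your proposal is correct and follows essentially the same route as the paper's proof: bound the principal's reward from above using the coercivity of $\Upsilon$, $\widehat g_{\Pr}$, $L_{\Pr}$ and $\widehat L$ together with the dynamics of $Y^n$ and $\E[M^n_T]=0$, control the state moments by Gronwall, absorb the $p$--growth into the quadratic terms via Young's inequality (this is exactly where $p<2$ enters), and play this off against a uniform-in-$n$ lower bound on $\widehat V_{n,\Pr}$ obtained from a fixed reference control. The only cosmetic differences are that the paper exploits concavity through Jensen's inequality and then argues that the argument of $U$ would diverge to $-\infty$, whereas you use an affine supergradient majorant with strictly positive slope, and that the paper bounds $\E[|Y^n_0|]$ separately ``by similar techniques'' while you fold it into $K^n$ and obtain it in the same stroke.
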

\begin{proof}
    We need to use the coercivity of the coefficients to show this result. For any $n \ge 1$,
    \begin{align*}
        &\widehat{J}_{n,\Pr} (\If^n)
        \le 
        \E \left[ U \left( \frac{C}{n} \sum_{i=1}^n\;\; |X^i_T|^p - Y^n_T - \int_0^T |\aleph^{i,n}(t,\Xbb^n)|^2 \mathrm{d}t  \right)
        \right]
        \\
        &\le 
         U \left( \E \left[ \frac{C}{n} \sum_{i=1}^n\;\; |X^i_T|^p - R + \int_0^T \widehat{L}(t,X^{i,n}_t, \mu^n_t, \aleph^{i,n}(t,\Xbb^n),Z^{i,n}_t) \mathrm{d}t - \int_0^T C_{\Pr} |\aleph^{i,n}(t,\Xbb^n)|^2 \mathrm{d}t \right]  \right)
         \\
        &\le 
         U \bigg( \E \bigg[ \frac{C}{n} \sum_{i=1}^n\;\; \sup_{t \in [0,T]}|X^i_t|^p - R + \int_0^T C\left(1 + |\aleph^{i,n}(t,\Xbb^n)|^p + |n\;Z^{i,n}_t|^p \right) - C_{\Pr} \left( |\aleph^{i,n}(t,\Xbb^n)|^2 + |n\;Z^{i,n}_t|^2 \right)  \mathrm{d}t 
         \\
         &~~~~~~~~~~~~~~~~~~~~~~~~~~~~~~~~~~~~~~~~~~~~- \int_0^T |\aleph^{i,n}(t,\Xbb^n)|^2 \mathrm{d}t \bigg]  \bigg)
         \\
        &\le 
         U \left( \E \left[ -R + \frac{1}{n} \sum_{i=1}^n \int_0^T C \left( 1 + |\aleph^{i,n}(t,\Xbb^n)|^p + |n\;Z^{i,n}_t|^p \right) - C_{\Pr} \left( |n\;Z^{i,n}_t|^2 + |\aleph^{i,n}(t,\Xbb^n)|^2 \right)\;\;\mathrm{d}t  \right]  \right)
    \end{align*}
    where we used the fact that: with classical gronwall techniques for SDE (see \cite{djete2019general} for instance)
    \begin{align*}
        \frac{1}{n} \sum_{i=1}^n\E\left[ \sup_{t \in [0,T]}|X^i_t|^p \right] \le K \left( 1 +  \frac{1}{n} \sum_{i=1}^n\E \left[\int_0^T |\aleph^{i,n}(t,\Xbb^n)|^p + |n\;Z^{i,n}_t|^p \mathrm{d}t \right]  \right)\;\mbox{for some constant }K>0.
    \end{align*}
    By taking the control $\widetilde{\If}^{n}=(\widetilde{\alephbb}^n,\widetilde{\Zbb}^n)$ where $\widetilde{Z}^{i,n}_t=0$ and $\widetilde{\aleph}^{i,n}(t,\widetilde{\Xbb}^n)=e$ for some $e \in \Er$ s.t. $Y^{n,\widetilde{\If}^{n}}_0 \ge R$, we get that
    \begin{align*}
        |\widehat{J}_{n,\Pr} (\widetilde{\If}^n)| \le \widehat{C},\;\;\mbox{for some constant }\widehat{C} >0\;\mbox{ independent of }\;n.
    \end{align*}
    Consequently, $\inf_{n \ge 1} \widehat{V}_{n,\Pr} > - \infty$. By using the estimates, we get
    \begin{align*}
       &\inf_{n \ge 1}\widehat{V}_{n,\Pr} - \sup_{n \ge 1} \delta_n 
       \\
       &\le \inf_{n \ge 1} U \left( \E \left[ -R + \frac{1}{n} \sum_{i=1}^n \int_0^T C \left( 1 + |\aleph^{i,n}(t,\Xbb^n)|^p + |n\;Z^{i,n}_t|^p \right) - C_{\Pr} \left( |n\;Z^{i,n}_t|^2 + |\aleph^{i,n}(t,\Xbb^n)|^2 \right)\;\;\mathrm{d}t  \right]  \right).
    \end{align*}
    Since $1 \le p <2$, $\lim_{x \to - \infty} U(x)=- \infty$ and $\inf_{n \ge 1} \widehat{V}_{n,\Pr}> - \infty$, we find that necessary we have that 
    \begin{align*}
        \sup_{n \ge 1} \E \left[ \frac{1}{n} \sum_{i=1}^n \int_0^T |n\;Z^{i,n}_t|^2 + |\aleph^{i,n}(t,\Xbb^n)|^2 \mathrm{d}t  \right] < \infty.
    \end{align*}
    By similar techniques, we also show that $\sup_{n \ge 1} \E [|Y^n_0|] < \infty$.

\end{proof}

\medskip
Let $(\If^n)_{n \ge 1}$ be a sequence s.t. $\If^n \in \Sc_n$ and $\sup_{n \ge 1} \E \left[ |Y^n_0| + \frac{1}{n} \sum_{i=1}^n \int_0^T |n\;Z^{i,n}_t|^{2} + |\aleph^{i,n}(t,\Xbb^n)|^{2} \mathrm{d}t \right] < \infty$. We set the sequence $(\Qr^n)_{n \ge 1}$ by
\begin{align*}
    \Qr^n
    :=
    \frac{1}{n} \sum_{i=1}^n \P \circ \left( X^{i,\If^n}, W^i,\Lambda^i, \muh^n \right)^{-1},\;\Lambda^i:=\delta_{ (n Z^{i,n}_t, \aleph^{i,n}(t,\Xbb^{n,\If^n}))}(\mathrm{d}z,\mathrm{d}e)\mathrm{d}t,\;\muh^n:=\frac{1}{n}\sum_{i=1}^n \delta_{(X^{i,\If^n},W^i,\Lambda^i)}.
\end{align*}
In addition, we consider the distribution $\widehat{\Qr}_n:=\frac{1}{n} \sum_{i=1}^n \P \circ \left( X^{i,\If^n}, W^i,\Lambda^i, \muh^n, Y^n_0 \right)^{-1}$.

\begin{proposition}\label{prop:relaxed_appr}{\rm \cite[Proposition 2.17]{djete2019general}}
    The sequence $(\widehat{\Qr}^n)_{n \ge 1}$ is relatively compact for the weak topology and bounded in $\Wc_1$. The sequence $(\Qr^n)_{n \ge 1}$ is relatively compact in $\Wc_r$ with $1 \le r<2$ and bounded in $\Wc_2$. Any point of accumulation $(\widehat{\Qr},\Qr)$ of $(\widehat{\Qr}^n,\Qr^n)_{n \ge 1}$ satisfies: $\widehat{\Qr}=\P \circ (X,W,\Lambda,\muh, Y_0)^{-1}$, $\Qr=\P \circ (X,W,\Lambda,\muh)^{-1}$ where the variables $X_0,W$ and $(\muh, Y_0)$ are independent, $W$ is a Brownian motion, $X$ is an $\F$--adapted continuous process solving
    \begin{align*}
        \mathrm{d}X_t
        =\int_{\R \x \Er}\widehat{b} \left( t, X_t, \mu, e,z \right) \Lambda_t(\mathrm{d}e,\mathrm{d}z) \mathrm{d}t + \sigma(t,X_t)\mathrm{d}W_t,\;\muh=\Lc(X,W,\Lambda|\muh,Y_0),
    \end{align*}
    and $\mu=(\mu_t=\Lc(X_t|\muh,Y_0))_{t \in [0,T]}.$
\end{proposition}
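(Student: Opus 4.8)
\emph{Plan.} The statement is \cite[Proposition 2.17]{djete2019general}; the plan is to reproduce its argument, which is the standard weak--convergence route for weakly interacting controlled diffusions, so I only sketch the main steps. \emph{Step 1 (uniform moment bounds).} Starting from the hypothesis $\sup_n \E\bigl[\,|Y^n_0| + \tfrac1n\sum_{i=1}^n\int_0^T (|nZ^{i,n}_t|^2 + |\aleph^{i,n}(t,\Xbb^n)|^2)\,\mathrm dt\,\bigr]<\infty$, I would combine the linear growth of $\widehat b$ in $(e,z)$, its Lipschitz property in $(x,\pi)$, and the boundedness and non--degeneracy of $\sigma$ (\Cref{assum:main1}, \Cref{assump:growth_coercivity}) with a Gronwall estimate on \eqref{eq:BSDE2_agent} (in the style of \cite[Lemma 4.1]{djete2019general}) to obtain $\sup_n \tfrac1n\sum_{i=1}^n\E[\sup_{t\le T}|X^{i,\If^n}_t|^2]<\infty$. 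By exchangeability of $(X^{i,\If^n},W^i,\Lambda^i)_{1\le i\le n}$ this is exactly a uniform bound in $\Wc_2$ for $\Qr^n$, and --- since only the first moment of $Y^n_0$ is controlled --- a uniform $\Wc_1$ bound for $\widehat\Qr^n$.

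\emph{Step 2 (tightness).} The $W^i$ all have law Wiener measure, hence are tight; the $\Lambda^i$ lie in $\M(\R\times\Er)$, and the $L^2$ control of $\int(|z|^2+|e|^2)$ provides the uniform integrability giving relative compactness of their laws in $\Wc_r$ for every $r<2$ and boundedness in $\Wc_2$; the $X^{i,\If^n}$ solve an SDE whose drift has moments bounded uniformly in $n$ and whose diffusion coefficient is bounded, so an Aldous/Kolmogorov estimate makes their laws tight on $\Cc^1$. It then follows that the empirical measures $\muh^n$ are tight, and the moment bounds of Step 1 upgrade the tightness of $(\Qr^n)$ to $\Wc_r$--relative compactness for $r<2$ with $\Wc_2$--boundedness, and give weak relative compactness of $(\widehat\Qr^n)$ with $\Wc_1$--boundedness.

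\emph{Step 3 (identification of the limit).} Fix a subsequence along which $(\widehat\Qr^n,\Qr^n)\to(\widehat\Qr,\Qr)$. Independence of $X_0$, $W$ and $(\muh,Y_0)$ in the limit is inherited from the independence, at finite $n$, of the $\iota^i$, the $W^i$ and the remaining initial randomness, together with the fact that $\muh^n$ and $Y^n_0$ do not anticipate the increments of the $W^i$; the Brownian character of $W$ is preserved under weak limits. The consistency relation $\muh=\Lc(X,W,\Lambda\mid\muh,Y_0)$ is obtained by passing to the limit in $\E^{\Qr^n}[\Phi(X,W,\Lambda)\Psi(\muh^n,Y^n_0)]=\E^{\Qr^n}[\langle\muh^n,\Phi\rangle\Psi(\muh^n,Y^n_0)]$, valid for bounded continuous $\Phi,\Psi$ precisely because $\muh^n$ is the empirical measure of $(X^{i,\If^n},W^i,\Lambda^i)_{1\le i\le n}$. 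Finally, to identify $X$ as a solution of the stated McKean--Vlasov SDE, I would pass to the limit in the associated martingale problem: for $f\in C^2_b(\R)$ the process $f(X^{i,\If^n}_t)-\int_0^t\int_{\R\times\Er}\bigl(\widehat b(s,X^{i,\If^n}_s,\mu^n,e,z)f'(X^{i,\If^n}_s)+\tfrac12\sigma^2(s,X^{i,\If^n}_s)f''(X^{i,\If^n}_s)\bigr)\Lambda^i_s(\mathrm de,\mathrm dz)\,\mathrm ds$ is a martingale, and its weak limit characterizes $X$ as $\F$--adapted --- with $\F$ the filtration generated by $(\muh,Y_0,X_{\cdot\wedge t},W_{\cdot\wedge t})$ --- solving $\mathrm dX_t=\int_{\R\times\Er}\widehat b(t,X_t,\mu,e,z)\Lambda_t(\mathrm de,\mathrm dz)\,\mathrm dt+\sigma(t,X_t)\,\mathrm dW_t$ with $\mu_t=\Lc(X_t\mid\muh,Y_0)$.

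\emph{Main obstacle.} The delicate point is this last passage to the limit in the drift $\int_{\R\times\Er}\widehat b(t,X_t,\mu,e,z)\Lambda_t(\mathrm de,\mathrm dz)$: $\widehat b$ is only assumed continuous (not Lipschitz) and of linear growth in $(e,z)$, whereas the control components converge only in $\Wc_r$, $r<2$. The way around it is to exploit the uniform $\Wc_2$--bound from Step 1, which furnishes the uniform integrability needed to truncate $\widehat b$ in $(e,z)$, pass to the limit on the truncated functional by continuity, and then remove the truncation; this --- together with careful bookkeeping of the conditioning by $\sigma(\muh,Y_0)$ --- is precisely what \cite[Proposition 2.17]{djete2019general} carries out in detail.
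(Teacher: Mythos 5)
The paper gives no proof of this proposition at all — it is imported verbatim as a citation of \cite[Proposition 2.17]{djete2019general} — and your sketch reproduces the standard argument of that reference (uniform moment bounds by Gronwall, componentwise tightness upgraded to Wasserstein relative compactness by uniform integrability, identification of the limit via the martingale problem with a truncation of $\widehat b$ in $(e,z)$ justified by the uniform $\Wc_2$ bound). Your proposal is correct and takes essentially the same approach, including the correct observation that the $L^1$ control of $Y^n_0$ is why $(\widehat\Qr^n)$ only gets weak relative compactness and $\Wc_1$ boundedness while $(\Qr^n)$ gets the stronger $\Wc_r$ statement.
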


\begin{proposition} \label{prop:weak_appr}
    Let $\widehat{\Qr}=\P \circ \left(X,W,\Lambda,\muh \right)^{-1}$ be a distribution satisfying the similar properties of any points of accumulation of $(\widehat{\Qr}^n)_{n \ge 1}$. There exists a $[0,1]$--valued uniform random variable $U$ independent of $(X_0,W)$ and a sequence of bounded piece--wise constant $\Er \x \R$--valued $(\sigma(X^\ell_s, Y_0, U:\;s \le t))_{t \in [0,T]}$--predictable processes $(\aleph^\ell, \gamma^\ell)_{\ell \ge 1}$ s.t. if $X^\ell$ satisfies: $X^\ell_0=X_0$,
    \begin{align*}
        \mathrm{d}X^\ell_t
        =
        \widehat{b} \left(t, X^\ell_t, \mu^\ell, \aleph^\ell_t, \gamma^\ell_t \right) \mathrm{d}t + \sigma(t,X^\ell_t) \mathrm{d}W_t,\;\Lambda^\ell:=\delta_{(\gamma^\ell_t,\aleph^\ell_t)}(\mathrm{d}z,\mathrm{d}e)\mathrm{d}t,\;\muh^\ell=\Lc(X^\ell,W,\Lambda^\ell|U,Y_0), \mu^\ell=(\Lc(X^\ell_t|U,Y_0))_{t \in [0,T]},
    \end{align*}
    we have
    \begin{align*}
        \lim_{\ell \to \infty} \P \circ \left(X^\ell, W, \Lambda^\ell, \muh^\ell \right)^{-1}= \Qr\mbox{ in }\Wc_2.
    \end{align*}

\end{proposition}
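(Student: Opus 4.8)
The plan is to build $(\aleph^\ell,\gamma^\ell)$ through a three-stage approximation, each stage transported through the McKean--Vlasov dynamics by the stability estimates that follow from the Lipschitz-in-$(x,\pi)$ property of $\widehat b$ in \Cref{assump:growth_coercivity} together with the non-degeneracy and boundedness of $\sigma$. \emph{Stage 1: from open-loop relaxed to feedback relaxed.} If the relaxed control $\Lambda$ in $\widehat\Qr$ is not already of feedback type, reduce to that case first: the drift of $X$ depends on $\Lambda$ only through $\int_{\R\x\Er}\widehat b(t,X_t,\mu,e,z)\,\Lambda_t(\mathrm{d}e,\mathrm{d}z)$, so replacing $\Lambda_t$ by the conditional kernel $\widehat\Lambda_t:=\E[\Lambda_t \mid \sigma(X_s:s\le t)\vee\sigma(\muh,Y_0)]$ leaves the conditional drift unchanged; a mimicking argument conditioning on the whole path of $X$ (Brunick--Shreve, applied to the martingale problem with the fixed coefficient $\sigma$) then produces a weak solution $\widehat X$ of the McKean--Vlasov SDE driven by the \emph{feedback} relaxed control $\widehat\Lambda(t,x_{t\wedge\cdot},\muh,Y_0)$ with the same law as $(X,W)$ and the same measure flow and environment.

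\emph{Stage 2: truncation to compact control values.} Any accumulation point of $(\widehat\Qr^n)$ is bounded in $\Wc_2$, hence $\E[\int_0^T(|e|^2+|z|^2)\,\Lambda_t(\mathrm{d}e,\mathrm{d}z)\,\mathrm{d}t]<\infty$; restricting $\widehat\Lambda$ to $\{|e|+|z|\le R\}$ and collapsing the excess mass onto a fixed point of $\Er\x\R$ perturbs the law of the corresponding McKean--Vlasov solution by $O(R^{-1})$ in $\Wc_2$, uniformly, by Gronwall estimates, so it suffices to approximate a \emph{compact-valued} feedback relaxed control, still denoted $\widehat\Lambda$. \emph{Stage 3: chattering.} Here the uniform randomisation $U$ enters. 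Since $(X_0,W)$ is independent of $(\muh,Y_0)$, encode $(\muh,Y_0)$ together with one extra independent randomisation coordinate into a single $[0,1]$-uniform $U$ independent of $(X_0,W)$, and disintegrate $\widehat\Lambda(t,x,\muh,y_0)=\int_0^1\delta_{\varphi(t,x,\muh,y_0,v)}\,\mathrm{d}v$ for a Borel $\varphi$ valued in the compact control set (every Borel probability measure on a Polish space is a push-forward of Lebesgue measure on $[0,1]$). On time grids with mesh tending to $0$, set $(\aleph^\ell_t,\gamma^\ell_t)$ to be piecewise constant, equal on each small sub-interval to $\varphi$ evaluated at the left endpoint, at the path of $X^\ell$ frozen at that endpoint, at $(\muh^\ell,Y_0)$ read off from $U$, and at a fast-rotating argument in $[0,1]$ (supplied either by an equidistributed deterministic sequence or by the randomisation coordinate of $U$); these processes are bounded, piecewise constant and $(\sigma(X^\ell_s,Y_0,U:s\le t))$-predictable, and they define a genuine McKean--Vlasov SDE for $X^\ell$. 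A chattering lemma in the spirit of \cite{djete2019general} (see also El~Karoui--M\'el\'eard) gives that $(X^\ell,\Lambda^\ell)$ with $\Lambda^\ell=\delta_{(\gamma^\ell_t,\aleph^\ell_t)}(\mathrm{d}z,\mathrm{d}e)\,\mathrm{d}t$ converges, jointly with $W$ and $\muh^\ell$, to $(\widehat X,\widehat\Lambda(t,\widehat X_{t\wedge\cdot},\cdot)\,\mathrm{d}t)$ and, the controls being bounded, the flow is stable, so the convergence holds in $\Wc_2$. Composing Stages 1--3 and extracting a diagonal subsequence in the truncation level and the mesh produces the claimed sequence, with $\P\circ(X^\ell,W,\Lambda^\ell,\muh^\ell)^{-1}\to\Qr$ in $\Wc_2$.

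\emph{Main obstacle.} It is Stage 3: the chattering has to be carried out \emph{inside} the McKean--Vlasov system, so that along the oscillating controls not only the trajectories but also the conditional laws $\mu^\ell_t=\Lc(X^\ell_t\mid U,Y_0)$ and the environment $\muh^\ell$ converge to the right limits, all while keeping the controls feedback in the controlled process $X^\ell$ and adapted to the coarse filtration $\sigma(X^\ell_s,Y_0,U:s\le t)$; and the resulting convergence must be upgraded from weak to $\Wc_2$, which is precisely where the $\Wc_2$-boundedness of the accumulation points (from the coercivity estimates behind \Cref{prop:coercivity}) and the compact truncation of Stage 2 supply the needed uniform integrability. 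The remaining points --- measurability of the disintegration, the admissibility of $(\aleph^\ell,\gamma^\ell)$, and the passage to the limit in the SDE --- are routine with the machinery of \cite{djete2019general}.
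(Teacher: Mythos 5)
Your overall toolkit (truncation of the relaxed control, extra uniform randomisation $U$, chattering on a time grid) is the right one, and it is essentially what the cited results \cite[Propositions 4.5 and 4.10]{djete2019general} already package. However, your Stage 1 is both unnecessary and damaging. The proposition does not ask the approximating controls to be feedback functionals of $X^\ell$; it only asks that $(\aleph^\ell,\gamma^\ell)$ be predictable for $(\sigma(X^\ell_s,Y_0,U:\,s\le t))_{t\in[0,T]}$. Projecting $\Lambda_t$ onto $\sigma(X_s:s\le t)\vee\sigma(\muh,Y_0)$ and invoking a mimicking theorem preserves (at best) the law of $X$, not the joint law of $(X,W,\Lambda,\muh)$: the innovation Brownian motion replacing $W$ has a different joint law with $X$, the law of the control kernel itself changes (and $\Lambda$ appears explicitly in $\Qr$, and through $\muh=\Lc(X,W,\Lambda\,|\,\muh,Y_0)$ and through the reward via $\int L_{\Pr}\,\Lambda_t(\mathrm{d}z,\mathrm{d}e)\mathrm{d}t$), so the quadruple you subsequently approximate is no longer distributed as $\Qr$. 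Your construction would therefore converge to the wrong limit.

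The point the proof actually has to address — and which your proposal never touches — is a measurability conversion. The chattering/randomisation results deliver bounded piecewise--constant controls that are predictable for the filtration generated by $(W,Y_0,U)$, i.e.\ open--loop in the noise, with $(X^\ell,W,\Lambda^\ell,\muh^\ell)\to(X,W,\Lambda,\muh)$ in law. To match the statement one must show that such controls are automatically predictable for $(\sigma(X^\ell_s,Y_0,U:\,s\le t))_{t}$. This follows by induction over the grid $0<t^1<\cdots<t^\ell=T$: on $[t^k,t^{k+1})$ the control is frozen at a value measurable in $(W_{t^k\wedge\cdot},Y_0,U)$, so
\begin{align*}
    W_{t}-W_{t^k}=\int_{t^k}^{t}\sigma(s,X^\ell_s)^{-1}\,\mathrm{d}X^\ell_s-\int_{t^k}^{t}\sigma(s,X^\ell_s)^{-1}\,\widehat{b}\left(s,X^\ell_s,\mu^\ell,\aleph^\ell_{t^k},\gamma^\ell_{t^k}\right)\mathrm{d}s,
\end{align*}
which, using the non--degeneracy of $\sigma$, recovers $W$ from $(X^\ell,Y_0,U)$ progressively and shows $\sigma(W_s,Y_0,U:s\le t)\subset\sigma(X^\ell_s,Y_0,U:s\le t)$. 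Without this step (or some substitute for it) your controls, built from the path of $X^\ell$ and a "fast-rotating" coordinate, do not obviously generate a well-posed system whose law converges to $\Qr$, and with it the whole proposition reduces to the cited approximation results. So: drop Stage 1, keep the controls open-loop in $(W,Y_0,U)$, and supply the adaptedness argument above.
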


\begin{proof}
    This is essentially an application of \cite[Proposition 4.10.]{djete2019general} combined with \cite[Proposition 4.5.]{djete2019general}. The only difference is the mesurability of $(\aleph^\ell,\gamma^\ell)$. In \cite[Proposition 4.5.]{djete2019general}, the process $(\aleph^\ell,\gamma^\ell)$ is bounded piece--wise constant $(\sigma(W_s, Y_0, U:\;s \le t))_{t \in [0,T]}$--predictable ($Y_0$ can be used as the common noise for the analogy with \cite[Proposition 4.5.]{djete2019general}). Therefore, there exists $0< t^1< \cdots<t^\ell=T$ s.t. the process $(\aleph^\ell,\gamma^\ell)$ verifies $(\aleph^\ell_t,\gamma^\ell_t)=(\aleph^\ell_{t^{k}},\gamma^\ell_{t^k})=\left(\aleph^\ell(t^k, W_{t^k \wedge \cdot},Y_0,U),\gamma^\ell_{t^k} (t^k, W_{t^k \wedge \cdot},Y_0,U) \right)$ for each $t^k \le t < t^{k+1}$. By using the recursivity of the process, we can show that $W$ is $(\sigma(X^\ell_s, Y_0,U:s \le t))_{t \in [0,T]}$--adapted. Indeed, it is enough to notice that: when $t^k \le t < t^{k+1}$,
    \begin{align*}
        W_{t} - W_{t^k}= \int_{t^k}^t \sigma(s,X^\ell_s)^{-1} \mathrm{d}X^\ell_s -  \int_{t^k}^t \sigma(s,X^\ell_s)^{-1} \widehat{b}\left(s,X^\ell_s, \mu^\ell, \aleph^\ell(t^k, W_{t^k \wedge \cdot},Y_0,U),\gamma^\ell_{t^k} (t^k, W_{t^k \wedge \cdot},Y_0,U) \right) \mathrm{d}s.
    \end{align*}
\end{proof}

\begin{proposition} \label{prop:markovian_appr}
    In the context of the previous proposition, for each $\ell \ge 1$, there exists a sequence of bounded Lipschitz maps $\left((\aleph^{\ell,j}, \gamma^{\ell,j}):[0,T] \x \R\x \R \x [0,1] \to \Er \x \R \right)_{j \ge 1}$ s.t. if $X^{\ell,j}$ satisfies
    \begin{align*}
        \mathrm{d}X^{\ell,j}_t
        =
        \widehat{b} \left(t, X^{\ell,j}_t, \mu^\ell, \aleph^{\ell,j}(t,X^{\ell,j}_t,Y_0,U), \gamma^{\ell,j}(t,X^{\ell,j}_t,Y_0,U) \right) \mathrm{d}t + \sigma(t,X^{\ell,j}_t) \mathrm{d}W_t,\;\mu^{\ell,j}_t=\Lc(X^{\ell,j}_t|Y_0,U),
    \end{align*}
    $\mut^{\ell,j}_t=\Lc\left(X^{\ell,j}_t, \aleph^{\ell,j}(t,X^{\ell,j}_t,U), \gamma^{\ell,j}(t,X^{\ell,j}_t,U)|Y_0,U \right)$, we have 
    \begin{align*}
        \lim_{j \to \infty} \P \circ \left( \mu^{\ell,j}, \delta_{\tilde\mu^{\ell,j}_t}(\mathrm{d}m)\mathrm{d}t \right)^{-1}= \P \circ \left( \mu^\ell, \delta_{\tilde \mu^\ell_t} (\mathrm{d}m)\mathrm{d}t  \right)^{-1}\;\mbox{in}\;\Wc_2.
    \end{align*}
    
\end{proposition}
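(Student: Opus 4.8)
The statement is the closed-loop refinement of \Cref{prop:weak_appr}: we have already replaced the relaxed limiting control by strict, piece-wise constant, but \emph{path-dependent} controls, and we now want to replace these in turn by \emph{Lipschitz feedback} maps of the current state and of the exogenous randomisation $(Y_0,U)$. The plan is to follow the closed-loop approximation scheme of \cite{MFD-2020-closed} (see also \cite{djete2019general}), the genuinely new point being the treatment of unbounded coefficients and of the non-compact control set $\Er\times\R$, handled by truncation together with the a priori moment and coercivity bounds already available (\Cref{assum:main1}, \Cref{assump:growth_coercivity1}). Fix $\ell$. \emph{Step 1 (reduction to cylinder controls).} On each interval $[t^k,t^{k+1})$ of the defining grid, $(\aleph^\ell_t,\gamma^\ell_t)$ is a bounded functional of $X^\ell_{t^k\wedge\cdot}$ and $(Y_0,U)$; approximating the path by its piecewise-linear interpolation on a refined grid, the functional by a bounded Lipschitz cylinder function (density of cylinder functions, then mollification), and using that $\widehat b,\sigma$ are Lipschitz and non-degenerate together with the uniform moment bounds to control the induced perturbation of the laws --- in particular the truncation error due to the unbounded state, via the coercivity of \Cref{assump:growth_coercivity1} --- we may assume $(\aleph^\ell_t,\gamma^\ell_t)=f^k(X^\ell_{s_1},\dots,X^\ell_{s_{r_k}},Y_0,U)$ on $[t^k,t^{k+1})$ with $f^k$ bounded Lipschitz and $s_i\le t^k$, at the cost of an arbitrarily small $\Wc_2$ error on the pair $(\mu^\ell,\delta_{\widetilde\mu^\ell_t}(\mathrm{d}m)\mathrm{d}t)$.

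\emph{Step 2 (Markovian projection).} Conditionally on $(Y_0,U)$ the filtration of $W$ coincides with that of $X^\ell$ --- this is the recursion established in the proof of \Cref{prop:weak_appr} --- hence $\mu^\ell_t=\Lc(X^\ell_t\mid Y_0,U)$ solves a closed conditional McKean--Vlasov evolution. A mimicking (Markovian projection) argument then produces the \emph{relaxed} feedback kernel $\Lambda_M(t,x,Y_0,U)(\mathrm{d}e,\mathrm{d}z):=\Lc(\aleph^\ell_t,\gamma^\ell_t\mid X^\ell_t=x,Y_0,U)$ for which the conditional McKean--Vlasov equation with drift $\int\widehat b(t,x,\mu_t,e,z)\,\Lambda_M(t,x,Y_0,U)(\mathrm{d}e,\mathrm{d}z)$ and the (unchanged, since already feedback and non-degenerate) diffusion $\sigma$ has a weak solution whose conditional marginals are exactly $\mu^\ell_t$ and whose control flow is exactly $\widetilde\mu^\ell_t=\mu^\ell_t(\mathrm{d}x)\,\Lambda_M(t,x,Y_0,U)(\mathrm{d}e,\mathrm{d}z)$; so no information beyond $(\mu^\ell,\widetilde\mu^\ell)$ is lost and $\mu^\ell$ is a fixed point of this system.

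\emph{Step 3 (dechattering into Lipschitz feedback).} It remains to replace $\Lambda_M$ by strict feedback. By a measurable selection and a chattering argument performed \emph{in the state variable} --- legitimate precisely because $\sigma$ is non-degenerate, so the diffusing state samples a rapidly oscillating feedback at the rate that reproduces the averaged drift $\int\widehat b\,\mathrm{d}\Lambda_M$ --- followed by a mollification, one obtains bounded Lipschitz maps $(\aleph^{\ell,j},\gamma^{\ell,j})\colon[0,T]\times\R\times\R\times[0,1]\to\Er\times\R$ such that the solutions $X^{\ell,j}$ of the corresponding Markovian McKean--Vlasov SDE satisfy $\Lc(X^{\ell,j}_t\mid Y_0,U)\to\mu^\ell_t$ and $\mu^{\ell,j}_t(\mathrm{d}x)\,\delta_{(\aleph^{\ell,j},\gamma^{\ell,j})(t,x,Y_0,U)}(\mathrm{d}e,\mathrm{d}z)\to\widetilde\mu^\ell_t$ in the sense of narrow convergence (a Young-measure limit, the graphs filling out the thick flow). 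Stability of the McKean--Vlasov SDE under perturbation of Lipschitz coefficients propagates the convergence through, and the moment and coercivity bounds upgrade weak convergence to $\Wc_2$, which yields $\lim_j\P\circ(\mu^{\ell,j},\delta_{\widetilde\mu^{\ell,j}_t}(\mathrm{d}m)\mathrm{d}t)^{-1}=\P\circ(\mu^\ell,\delta_{\widetilde\mu^\ell_t}(\mathrm{d}m)\mathrm{d}t)^{-1}$ in $\Wc_2$.

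\emph{Main obstacle.} Relative to \cite{MFD-2020,MFD-2020-closed,djete2019general} the two hard points are: (i) each approximation must carry a uniform truncation of the unbounded state and of the non-compact controls, with the errors controlled through the coercivity of \Cref{assump:growth_coercivity1} and the Gronwall moment estimates, so as to obtain $\Wc_2$-convergence and not merely weak convergence; and (ii) the mimicking and the dechattering must stay compatible with the McKean--Vlasov coupling --- the conditional flow $\mu^\ell_t$ reappearing in $\widehat b$ --- which is why the mimicking is carried out conditionally on the exogenous randomisation $(Y_0,U)$ and the fixed-point character of $\mu^\ell$ is exploited. Everything else is the closed-loop approximation of \cite{MFD-2020-closed} carried over to the present unbounded setting.
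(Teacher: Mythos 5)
Your route is genuinely different from the paper's. The paper's proof is a two-line reduction: since the controls $(\aleph^\ell,\gamma^\ell)$ produced by \Cref{prop:weak_appr} are \emph{bounded} (by some $C^\ell$) and piece-wise constant for each fixed $\ell$, the composed drift $(t,x,m,e,z)\mapsto \widehat b(t,x,m,h_\ell(e),h_\ell(z))$ with $h_\ell(x)=-C^\ell\vee(C^\ell\wedge x)$ is bounded, continuous and Lipschitz in $(x,m)$, and the statement is then exactly \cite[Proposition A.5]{MFD-2020-closed} applied to a bounded-coefficient conditional McKean--Vlasov system. You instead re-derive the content of that cited proposition from scratch (cylinder-function reduction, conditional mimicking, dechattering in the state variable using non-degeneracy of $\sigma$, mollification). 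That is the standard internal mechanism behind such closed-loop approximation results, so your roadmap is coherent; what the paper's approach buys is that none of these steps needs to be justified here, while yours would, if completed, make the appendix self-contained.

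Two caveats. First, you misidentify the main obstacle: you present the unbounded state, the non-compact control set and the coercivity of \Cref{assump:growth_coercivity1} as the crux of \emph{this} proposition, but that difficulty is resolved one step earlier --- \Cref{prop:weak_appr} already delivers bounded controls for each fixed $\ell$, which is precisely the observation that lets the paper invoke the bounded-coefficient result. The coercivity estimates enter in \Cref{prop:coercivity} and \Cref{prop:conv-principal}, not here. Second, as a standalone argument your Steps 2 and 3 are asserted rather than proved: a mimicking theorem for conditional laws given $(Y_0,U)$ inside a McKean--Vlasov fixed point, and the passage from a relaxed Markovian kernel to \emph{Lipschitz} strict feedback with convergence of the joint flow $\delta_{\tilde\mu^{\ell,j}_t}(\mathrm{d}m)\mathrm{d}t$ in $\Wc_2$, are each nontrivial and would need either a precise reference or a full proof. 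As written, the proposal is a correct strategy with the hard steps left open, whereas the paper closes them by citation.
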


\begin{proof}
    This is just an application of \cite[Proposition A.5.]{MFD-2020-closed}. Indeed, since the control $(\aleph^\ell,\gamma^\ell)$ is bounded for each $\ell \ge 1$, we have $\sup_{(t,\om)}|\aleph^\ell_t| + |\gamma^\ell_t| \le C^\ell$ for some constant $C^\ell >0$. The drift $(t,x,m,e,z) \mapsto \widehat{b}(t,x,m,h_\ell(e),h_\ell(z) )$ is then bounded, continuous in $(x,m,e,z)$ for each $t$ and Lipschitz in $(x,m)$ uniformly in $(t,e,z)$, where $h_\ell(x)=-C^\ell \vee ( C^\ell \wedge x)$. We can then apply \cite[Proposition A.5.]{MFD-2020-closed}.
\end{proof}

\medskip
Let $(\aleph,\gamma) \in \Ac$ be admissible control s.t.
\begin{align*}
    \E \left[\exp \left(\int_0^T a |\aleph(t,U_t)|^2 + b |\gamma(t,U_t)|^2 \mathrm{d}t \right) \right]< \infty,\;\;\mbox{for each }a,b \ge 0.
\end{align*}

%for $1/q + 1/q'=1$, $\sup_{\mu \in \Pc_p}\E[|L^{\mu,\aleph,\gamma}_T|^q] < \infty$ where  For each $\ell \ge 1$,
We define $\aleph^\ell(\cdot):=\aleph(\cdot) \wedge \ell$ and $\gamma^\ell(\cdot):=\gamma(\cdot) \wedge \ell$. It is straightforward that $(\aleph^\ell,\gamma^\ell) \in \Ac$ is admissible. We set $X^\ell:=X^{\aleph^\ell,\gamma^\ell}$ and $X:=X^{\aleph,\gamma}$ ($X$ is well--posed by a straightforward extension of \cite[Theorem 2.3, Theorem 2.4]{Lacker-strong-2018}). We consider a progressively Borel map $\Phi:[0,T] \x \R \x C([0,T];\Pc_p(\R)) \x \Er \x \R \to \R $ verifying
\begin{align*}
    A:=\sup_{(t,x,\pi,e,z)} \frac{|\Phi(t,x,\pi,e,z)|}{1 + |x|^p + \sup_{s \le t}\|\pi(s)\|^p_p + |e|^2 + |z|^2} < \infty
\end{align*}
and continuous in $(\pi,e,z)$ for any $(t,x)$.
\begin{proposition} \label{prop:first_cong_bounded}
    For $\mub^\ell:=\Lc(X^{\ell})$ and $\mub=\Lc(X)$, we have $\lim_{\ell \to \infty} \Wc_r(\mub^\ell,\mub)=0$ for any $r \ge 1$ , and
    \begin{align*}
        \lim_{\ell \to \infty} \E \left[ \int_0^T \Phi(t,X^\ell_t, \mu^\ell, \aleph^\ell(t,X^\ell_t), \gamma^\ell(t,X^\ell_t)) \mathrm{d}t \right] = \E \left[ \int_0^T \Phi(t,X_t, \mu, \aleph(t,X_t), \gamma(t,X_t)) \mathrm{d}t \right].
    \end{align*}
\end{proposition}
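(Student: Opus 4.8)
The plan is to reduce the statement, via Girsanov, to the convergence of a single Radon--Nikodym density over the reference process $U$ with $\mathrm dU_t=\sigma(t,U_t)\,\mathrm dW_t$, $U_0=\iota$, and then to pass to the limit using the standing exponential integrability of $(\aleph,\gamma)$ along $U$.

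\emph{Step 1 (Girsanov reduction).} Truncation gives pointwise convergence of the coefficients: $\aleph^\ell(t,x)=\aleph(t,x)$ and $\gamma^\ell(t,x)=\gamma(t,x)$ as soon as $\ell>|\aleph(t,x)|\vee|\gamma(t,x)|$, together with $|\aleph^\ell|\le|\aleph|$ and $|\gamma^\ell|\le|\gamma|$ everywhere. Since $\widehat b$ has linear growth, is Lipschitz in $(x,\pi)$ (Assumption~\ref{assump:growth_coercivity}) and $\sigma$ is bounded and non-degenerate, Girsanov's theorem represents the McKean--Vlasov law $\mub^\ell=\Lc(X^\ell)$ as $\Lc^{\Q^\ell}(U)$ with $\tfrac{\mathrm d\Q^\ell}{\mathrm d\P}=L^\ell_T$, where
\[
   L^\ell_\cdot:=\mathcal E\!\left(\int_0^\cdot \sigma(s,U_s)^{-1}\widehat b\big(s,U_s,\mu^\ell,\aleph^\ell(s,U_s),\gamma^\ell(s,U_s)\big)\,\mathrm dW_s\right),
\]
and $\mu^\ell=(\Lc^{\Q^\ell}(U_t))_{t\le T}$ is the corresponding fixed-point flow; define $L_T$ and $\mu$ analogously from $(\aleph,\gamma)$, so $\mub=\Lc(X)=\Lc^{\Q}(U)$. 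Under this change of measure the quantity to analyse becomes $\E\big[L^\ell_T\int_0^T\Phi(t,U_t,\mu^\ell,\aleph^\ell(t,U_t),\gamma^\ell(t,U_t))\,\mathrm dt\big]$, while the Wasserstein statement reads $\int F\,\mathrm d\mub^\ell=\E[L^\ell_T F(U)]$ for bounded continuous $F$; everything is thus reduced to the joint behaviour of $L^\ell_T$ and of the deterministic flow $\mu^\ell$.

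\emph{Step 2 (Uniform estimates).} The technical core is the bound $\sup_{\ell\ge1}\E[(L^\ell_T)^{1+\delta_0}]<\infty$ for some small $\delta_0>0$. Because $\widehat b$ has linear growth, the integrand of $L^\ell$ satisfies $|\sigma^{-1}\widehat b(s,U_s,\mu^\ell,\aleph^\ell(s,U_s),\gamma^\ell(s,U_s))|^2\le C\big(1+|U_s|^2+\sup_{r\le s}\|\mu^\ell_r\|_p^2+|\aleph(s,U_s)|^2+|\gamma(s,U_s)|^2\big)$; since $\E[\exp(a\int_0^T(|\aleph(s,U_s)|^2+|\gamma(s,U_s)|^2)\,\mathrm ds)]<\infty$ for every $a$ and $\E[\exp(a\int_0^T|U_s|^2\,\mathrm ds)]<\infty$ for $a$ below a threshold depending only on $T$ and $\|\sigma\|_\infty$, the standard $L^{1+\delta_0}$-estimate for Dol\'eans--Dade exponentials (whose exponent carries a factor proportional to $\delta_0$) combined with a Gronwall a priori bound for $s\mapsto\sup_{r\le s}\|\mu^\ell_r\|_p$ yields the claim for $\delta_0$ small enough, and the same argument controls $L_T$. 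Hölder's inequality then upgrades this to $\sup_\ell\big(\E[\sup_{t\le T}|X^\ell_t|^q]+\E[(\int_0^T|\aleph^\ell(t,X^\ell_t)|^2+|\gamma^\ell(t,X^\ell_t)|^2\,\mathrm dt)^q]\big)<\infty$ for every $q\ge1$, because under $\Q^\ell$ these equal (up to the truncation) expectations of $\sup_t|U_t|^q$ and of $(\int_0^T|\aleph(t,U_t)|^2+|\gamma(t,U_t)|^2\,\mathrm dt)^q$, which lie in every $L^r(\P)$.

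\emph{Step 3 (Passage to the limit).} With these uniform estimates, the Lipschitz dependence of $\widehat b$ on $(x,\pi)$ makes the McKean--Vlasov fixed-point map $\pi\mapsto(\Lc^{\Q^\pi}(U_t))_{t\le T}$ a Gronwall contraction uniformly in $\ell$; since the data $(\aleph^\ell,\gamma^\ell)$ converge pointwise to $(\aleph,\gamma)$ under the domination above, the fixed points converge, $\sup_{t\le T}\Wc_p(\mu^\ell_t,\mu_t)\to0$, i.e. $\mu^\ell\to\mu$ in $C([0,T];\Pc_p(\R))$. Dominated convergence then gives $L^\ell_T\to L_T$ in $L^1(\P)$, hence $\mub^\ell\to\mub$ weakly, and — using the uniform integrability of $\sup_t|X^\ell_t|^{r+1}$ from Step~2 — $\Wc_r(\mub^\ell,\mub)\to0$ for every $r\ge1$; the uniform bound of Step~2 also upgrades this to $L^\ell_T\to L_T$ in $L^{1+\delta_1}(\P)$ for some $\delta_1\in(0,\delta_0)$. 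Finally $\Phi(t,U_t,\mu^\ell,\aleph^\ell(t,U_t),\gamma^\ell(t,U_t))\to\Phi(t,U_t,\mu,\aleph(t,U_t),\gamma(t,U_t))$ for $\mathrm dt\otimes\mathrm d\P$-a.e.\ $(t,\omega)$ by continuity of $\Phi$ in $(\pi,e,z)$, it is dominated by $A\big(1+|U_t|^p+\sup_{s\le t}\|\mu^\ell_s\|_p^p+|\aleph(t,U_t)|^2+|\gamma(t,U_t)|^2\big)$, so $\int_0^T\Phi(t,U_t,\mu^\ell,\aleph^\ell(t,U_t),\gamma^\ell(t,U_t))\,\mathrm dt\to\int_0^T\Phi(t,U_t,\mu,\aleph(t,U_t),\gamma(t,U_t))\,\mathrm dt$ in $L^{(1+\delta_1)/\delta_1}(\P)$; pairing this with $L^\ell_T\to L_T$ in $L^{1+\delta_1}(\P)$ via Hölder yields $\E\big[L^\ell_T\int_0^T\Phi(t,U_t,\mu^\ell,\aleph^\ell(t,U_t),\gamma^\ell(t,U_t))\,\mathrm dt\big]\to\E\big[\int_0^T\Phi(t,X_t,\mu,\aleph(t,X_t),\gamma(t,X_t))\,\mathrm dt\big]$, which is the assertion.

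\emph{Main obstacle.} The delicate point is Step~2: one cannot treat $\aleph^\ell,\gamma^\ell$ as uniformly bounded, so the uniform control of the Girsanov density must be obtained through the domination $|\aleph^\ell|\le|\aleph|$, $|\gamma^\ell|\le|\gamma|$ and the exponential integrability of $(\aleph,\gamma)$ along $U$, while simultaneously closing the self-consistency between $L^\ell$ and its own flow $\mu^\ell$; this is where a careful a priori / Gronwall argument, in the spirit of \cite[Lemma~4.1]{djete2019general} and the well-posedness estimates of \cite{Lacker-strong-2018}, is required.
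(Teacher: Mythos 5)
Your proof follows the same Girsanov skeleton as the paper: represent $\mub^\ell=\Lc^{\Q^\ell}(U)$ with a Dol\'eans--Dade density $L^\ell_T$ built on the reference process $U$, obtain a uniform-in-$\ell$ moment bound on $L^\ell_T$ from the exponential integrability of $(\aleph,\gamma)$ along $U$, and then pass to the limit in $\E[L^\ell_T\int_0^T\Phi\,\mathrm dt]$. Where you genuinely diverge is the identification of the limit. The paper does \emph{not} prove convergence of the whole sequence directly: it extracts subsequential limits of $(\mub^\ell)_\ell$ and of $\Qr^\ell=\Lc(L^\ell,U)$ from relative compactness in $\Wc_r$, identifies any limit point via a martingale problem combined with the stable convergence of Jacod--M\'emin (using only $\lim_\ell\E[\int_0^T|\aleph^\ell(t,U_t)-\aleph(t,U_t)|\,\mathrm dt]=0$), and then invokes uniqueness of the limiting McKean--Vlasov law (Lacker) to conclude $\mut=\mub$. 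You instead assert that $\pi\mapsto(\Lc^{\Q^\pi}(U_t))_t$ is a ``Gronwall contraction uniformly in $\ell$'' in Wasserstein and deduce convergence of the fixed points, then upgrade to $L^{1+\delta_1}$ convergence of the densities. Your route is more quantitative and avoids subsequences, but its key claim is exactly the step that does not come for free: for an unbounded drift that is merely Lipschitz in $\pi$ for $\Wc_p$, the Girsanov fixed-point map is not a Wasserstein contraction by a naive Gronwall argument; one needs the weighted total-variation metric and transport-entropy machinery of Lacker (or a short-time iteration), and the continuity of the fixed point in the data $(\aleph^\ell,\gamma^\ell)$, which converge only pointwise under domination, must also be justified. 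The paper's compactness-plus-uniqueness route deliberately sidesteps this by needing only \emph{uniqueness} of the limit equation, which it cites.

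Two further points on your Step 2. First, your claim $\E[\exp(a\int_0^T|U_s|^2\,\mathrm ds)]<\infty$ for small $a>0$ requires Gaussian-type tails of $\nu=\Lc(\iota)$; the standing assumption only gives $\int\exp(a|u|)\nu(\mathrm du)<\infty$, so this threshold may be zero. (The paper's own displayed estimate silently drops the $(x,\pi)$-contribution of $\widehat b$ to the density, so this issue is latent in both arguments; but you invoke it explicitly, so it should be addressed, e.g.\ by noting that the Lipschitz-in-$x$ part of the drift can be absorbed by a preliminary change of variables or by strengthening the moment hypothesis.) Second, your restriction to $L^{1+\delta_0}$ integrability of the density is harmless --- the H\"older pairing at the end is carried out correctly and the paper's stronger claim of all $L^r$ moments is not needed --- and your final dominated-convergence step for $\Phi$, using continuity in $(\pi,e,z)$ and the growth bound with constant $A$, matches the paper's concluding computation.
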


\begin{proof}
    Notice that, for any $\ell \ge 1$, $\Lc^{\P^\ell} (U)=\Lc(X^\ell)=\mub^\ell$ where $\mathrm{d}\P^\ell:=L^{\mu^\ell,\aleph^\ell,\gamma^\ell}_T \mathrm{d}\P$, and for each $(\aleph,\gamma)$, $L^{\mu,\aleph,\gamma}_0=1$, $\mathrm{d}L^{\mu,\aleph,\gamma}_t=L^{\mu,\aleph,\gamma}_t \sigma(t,U)^{-1}\widehat{b}(t,U_t,\mu,\aleph(t,U_t), \gamma(t,U_t)) \mathrm{d}W_t$. For any $r \ge 1$, there exists $C >0$ and $q >1$ s.t.
    \begin{align*}
        \sup_{\ell \ge 1}\E \left[|L^{\mu^\ell, \aleph^\ell,\gamma^\ell}_T|^r \right] \le C \left( 1 + \E \left[\exp \left(\int_0^T q |\aleph(t,U_t)|^2 + q |\gamma(t,U_t)|^2 \mathrm{d}t \right) \right] \right) < \infty
    \end{align*}
    and
    \begin{align*}
        \sup_{\ell \ge 1}\E \left[ \sup_{t \in [0,T]} |X^\ell_t|^r \right]= \sup_{\ell \ge 1} \E \left[ L^{\mu^\ell,\aleph^\ell,\gamma^\ell}_T\sup_{t \in [0,T]} |U_t|^r \right] \le \sup_{\ell \ge 1}\E[|L^{\mu^\ell,\aleph^\ell,\gamma^\ell}_T|^r]^{1/r} \E \left[ \sup_{t \in [0,T]} |U_t|^{r r'} \right]^{1/r'} < \infty
    \end{align*}
    where $r'$ is the conjugate of $r$ i.e. $1/r+1/r'=1$. Let $\Qr^\ell:= \Lc\left( L^{\mu^\ell, \aleph^\ell,\gamma^\ell}, U \right)$. 
    It is easy to check that $(\mub^\ell)_{\ell \ge 1}$ and $(\Qr^\ell)_{\ell \ge 1}$ are relatively compact in $\Wc_r$ for any $r \ge 1$. Let $\mut$ and $\widetilde{\Qr}=\Lc(L, U)$ be the limit of  convergent sub--sequence of $(\mub^\ell)_{\ell \ge 1}$ and $(\Qr^\ell)_{\ell \ge 1}$. We use the same notation for the sequence and the sub--sequence. By noticing that $\lim_{\ell \to \infty}\E \left[ \int_0^T |\aleph^\ell(t,U_t)-\aleph(t,U_t)| \mathrm{d}t \right]=0$ and by using a martingale problem combined with the stable convergence of \citeauthor*{jacod1981type} \cite{jacod1981type}, it is straightforward to check that $\Lc(L,U)=\Lc(L^{\mut,\aleph,\gamma},U)$ and $\Lc^{\widetilde{\P}}(U)=\mut$ where $\mathrm{d}\widetilde{\P}:=L^{\tilde \mu,\aleph,\gamma}_T\mathrm{d}\P$. We deduce by uniqueness that $\mut=\mub$. Then %$\Lc(\Xt)=\mut=\mu$. Under the conditions of the Proposition, it is straightforward that $\lim_{\ell \to \infty}\E \left[ \int_0^T |\aleph^\ell(t,U_t)-\aleph(t,U_t)|^{2q'} \mathrm{d}t \right]=0$, then
    \begin{align*}
        &\lim_{\ell \to \infty}\E \left[ \int_0^T \Phi(t,X^\ell_t, \mu^\ell, \aleph^\ell(t,X^\ell_t), \gamma^\ell(t,X^\ell_t)) \mathrm{d}t \right]
        =\lim_{\ell \to \infty}\E \left[ L^{\mu^\ell,\aleph^\ell,\gamma^\ell}_T \int_0^T \Phi(t,U_t, \mu^\ell, \aleph^\ell(t,U_t), \gamma^\ell(t,U_t)) \mathrm{d}t \right]
        \\
        &=\lim_{\ell \to \infty}\E \left[ L^{\mu^\ell,\aleph^\ell,\gamma^\ell}_T \int_0^T \Phi(t,U_t, \mu, \aleph(t,U_t), \gamma(t,U_t)) \mathrm{d}t \right]
        =\E \left[ L^{\mu,\aleph,\gamma}_T \int_0^T \Phi(t,U_t, \mu, \aleph(t,U_t), \gamma(t,U_t)) \mathrm{d}t \right]\\
        &=\E \left[ \int_0^T \Phi(t,X_t, \mu, \aleph(t,X_t), \gamma(t,X_t)) \mathrm{d}t \right].
    \end{align*}
    This is enough to deduce the proposition.
\end{proof}
We stay in the context of \Cref{prop:first_cong_bounded}. For each $\ell \ge 1$, we consider the process $\Xbb=(X^{1,n},\cdots,X^{n,n})$ verifying: $X^{i,n}_0=\iota^i$,
\begin{align*}
    \mathrm{d}X^{i,n}_t=\widehat{b}\left(t, X^{i,n}_t, \mu^n, \aleph^\ell(t,X^{i,n}_t), \gamma^\ell(t,X^{i,n}_t) \right) \mathrm{d}t + \sigma(t,X^{i,n}_t)\mathrm{d}W^i_t\;\mbox{with}\;\mu^n_t:=\frac{1}{n}\sum_{i=1}^n \delta_{X^{i,n}_t}.
\end{align*}
We set $\mut^n_t:=\frac{1}{n}\sum_{i=1}^n \delta_{\left( X^{i,n}_t, \aleph^\ell(t,X^{i,n}_t), \gamma^\ell(t,X^{i,n}_t) \right)}$ and $\mut^\ell_t:= \Lc\left( X^{\ell}_t, \aleph^\ell(t,X^{\ell}_t), \gamma^\ell(t,X^{\ell}_t) \right)$.
\begin{proposition} \label{prop:second_cong_bounded}
    For each $\ell \ge 1$, $\lim_{n \to \infty} \delta_{\tilde \mu^n_t}(\mathrm{d}m) \mathrm{d}t = \delta_{\tilde\mu^\ell_t}(\mathrm{d}m)\mathrm{d}t\;\mbox{in}\;\Wc_2.$
\end{proposition}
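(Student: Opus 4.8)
The plan is to read \Cref{prop:second_cong_bounded} as a (quantitative) propagation of chaos statement for the $\ell$--frozen particle system. Fix $\ell\ge1$. Since $\aleph^\ell$ and $\gamma^\ell$ are bounded by $\ell$, \Cref{assump:growth_coercivity} guarantees that the drift $b^\ell(t,x,\pi):=\widehat b(t,x,\pi,\aleph^\ell(t,x),\gamma^\ell(t,x))$ is Borel in $(t,x)$, bounded, and Lipschitz in the measure argument $\pi$ uniformly in $(t,x)$; consequently the McKean--Vlasov equation solved by $X^\ell=X^{\aleph^\ell,\gamma^\ell}$ is well posed (strong existence, pathwise uniqueness, e.g.\ \cite[Theorem 2.3--2.5]{Lacker-strong-2018}), and $\Xbb^n=(X^{1,n},\dots,X^{n,n})$ is the corresponding exchangeable mean--field particle system, with $\tilde\mu^n_t$ the empirical measure of the triples $(X^{i,n}_t,\aleph^\ell(t,X^{i,n}_t),\gamma^\ell(t,X^{i,n}_t))$. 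It suffices to show that $\delta_{\tilde\mu^n_t}(\mathrm{d}m)\mathrm{d}t$ converges in probability, in $\Wc_2$ on $\M(\Pc_p(\R\times\Er\times\R))$, to the deterministic $\delta_{\tilde\mu^\ell_t}(\mathrm{d}m)\mathrm{d}t$, which --- the limit being deterministic --- is the same as convergence in distribution.

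I would proceed in four steps. \emph{Step 1.} Invoke the quantitative propagation of chaos of \cite[Theorem 2.5]{Lacker-strong-2018} (valid here since $b^\ell$ is bounded and Lipschitz in the measure): for every $t$, $\Lc(X^{1,n}_t)\to\mu^\ell_t:=\Lc(X^\ell_t)$ in $\Wc_p$, and any fixed block of particles becomes asymptotically independent, with the two--particle marginal within $O(1/n)$ (in relative entropy) of $\mu^\ell_t\otimes\mu^\ell_t$. \emph{Step 2.} Using the non--degeneracy condition \Cref{assum:main1}$(ii)$ and a Girsanov change of measure that removes the bounded adapted drift of the tagged particle $X^{1,n}$, show that $\Lc(X^{1,n}_t)$ has a density $\rho^n_t$ with respect to Lebesgue measure, bounded in $L^q$ uniformly in $n$ for every $q\ge1$ (heat--kernel comparison together with bounds on the Girsanov densities); hence $\{\rho^n_t\}_n$ is uniformly integrable and $\rho^n_t\rightharpoonup\rho^\ell_t:=\mathrm{d}\mu^\ell_t/\mathrm{d}x$ weakly in $L^1$. \emph{Step 3.} For a bounded Lipschitz test map $g$ on $\R\times\Er\times\R$ and a bounded measurable $\varphi$ on $[0,T]$, exchangeability gives
\begin{align*}
    \E\!\left[\int_0^T\varphi(t)\int g\,\mathrm{d}\tilde\mu^n_t\,\mathrm{d}t\right]
    =\int_0^T\varphi(t)\int_{\R} g\big(x,\aleph^\ell(t,x),\gamma^\ell(t,x)\big)\,\rho^n_t(x)\,\mathrm{d}x\,\mathrm{d}t,
\end{align*}
and since $x\mapsto g(x,\aleph^\ell(t,x),\gamma^\ell(t,x))$ is bounded and Borel (it need not be continuous), the weak--$L^1$ convergence of Step 2 lets the right--hand side pass to $\int_0^T\varphi(t)\int g\,\mathrm{d}\tilde\mu^\ell_t\,\mathrm{d}t$; the $L^2$ fluctuation of $\int_0^T\varphi(t)\int g\,\mathrm{d}\tilde\mu^n_t\,\mathrm{d}t$ around its mean is $o(1)$ because the pair covariances are controlled by the $O(1/n)$ estimate of Step 1 summed over the $O(n^2)$ pairs. \emph{Step 4.} Combine with the uniform $p$--moment bounds on $\sup_t|X^{i,n}_t|$ from (the computation in) \Cref{prop:first_cong_bounded} to upgrade the resulting weak convergence to convergence in $\Wc_2$, and conclude.

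The main obstacle is Step 3: pushing the convergence of the particle marginals through the \emph{merely Borel} feedback maps $\aleph^\ell,\gamma^\ell$. A continuity/pushforward argument is unavailable, and --- unlike in \Cref{prop:first_cong_bounded}, where a single Girsanov density handles the whole McKean--Vlasov law --- one cannot tilt the full $n$--particle law onto a product reference law, because the relative entropy of the tilted law with respect to the product law grows linearly in $n$. The way around this is to argue at the level of a single tagged particle (legitimate by exchangeability), where the $n$--uniform density bound of Step 2 renders the bounded Borel integrand harmless via plain $L^1$--$L^\infty$ duality. Once this is in place, the remaining ingredients --- well--posedness of the limit equation, the asymptotic--independence/variance estimate, and the passage from the weak to the $\Wc_2$ topology --- are routine given \cite[Theorem 2.5]{Lacker-strong-2018}, the non--degeneracy of $\sigma$, and the moment bounds already established.
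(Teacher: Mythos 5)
Your argument reaches the right conclusion, but it takes a substantially longer route than the paper, and the detour is built around an obstacle that the cited result already removes. The paper's entire proof is: for fixed $\ell$, the map $(t,x,m)\mapsto\widehat b\bigl(t,x,m,\aleph^\ell(t,x),\gamma^\ell(t,x)\bigr)$ is bounded and Lipschitz in $m$ for $\Wc_p$, so \cite[Theorem 2.5]{Lacker-strong-2018} applies directly. The point of that theorem --- it is the ``strong form'' of propagation of chaos --- is precisely that it tolerates drifts that are merely bounded and \emph{measurable} in the state variable and yields convergence of the empirical measures in a topology strong enough to test against bounded Borel functions (via total-variation/relative-entropy control of the finite-dimensional marginals). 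Consequently the composition with the Borel feedback maps $\aleph^\ell,\gamma^\ell$, which you identify as the ``main obstacle,'' is handled for free, and your Steps 2--3 (Girsanov removal of the tagged drift, uniform $L^q$ density bounds, weak-$L^1$ convergence of $\rho^n_t$, and the pair-covariance variance estimate) are a correct but redundant reconstruction of what the theorem already delivers. What your version buys is a more self-contained and quantitative account of \emph{why} Borel test functions pass to the limit (the $n$-uniform density bound is a genuinely different mechanism from Lacker's entropy argument, and could survive in settings where the measure-Lipschitz hypothesis fails); what it costs is length and a few details you leave implicit, notably that testing $\delta_{\tilde\mu^n_t}(\mathrm{d}m)\mathrm{d}t$ against functions of the single form $\varphi(t)\int g\,\mathrm{d}m$ must be extended to polynomials in $m$ (i.e.\ $U$-statistics) to characterize the limit in $\M(\Pc_p)$ before the moment bounds upgrade the convergence to $\Wc_2$. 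I would also note that both you and the paper assert boundedness of $\widehat b$ once $(e,z)$ are truncated, which relies on the growth structure of $b\circ\widehat\alpha$; since you make the same assertion as the paper, this is not a point of divergence.
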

\begin{proof}
    For each $\ell \ge 1$, notice that the map $(t,x,m) \mapsto \widehat{b}(t,x,m,\aleph^\ell(t,x),\gamma^\ell(t,x))$ is bounded and Lipschitz in $m$ for $\Wc_p$. Therefore, we just need to apply \cite[Theorem 2.5.]{Lacker-strong-2018} to deduce the result.
\end{proof}

\medskip
Let $(X^{i,n})_{1 \le i \le n, n \ge 1}$ be a sequence of processes satisfying: $X^{i,n}_0=\iota^i$,
\begin{align*}
    \mathrm{d}X^{i,n}_t=b^{i,n}(t,\Xbb^n)\mathrm{d}t + \sigma(t,X^{i,n}_t) \mathrm{d}W^i_t,\;\Xbb^n:=(X^{1,n},\;\dots,X^{n,n}),\;\mu^n:=\frac{1}{n}\sum_{i=1}^n \delta_{X^{i,n}}.
\end{align*}
where the map $b^{i,n}: [0,T] \x C([0,T];\R)^n \to \R$ is a progressively Borel measurable satisfying the boundness condition $\sup_{n \ge 1}\sup_{i \le n}\sup_{(t,(x^1,\cdots,x^n))} |b^{i,n}(t,x^1,\cdots,x^n)| < \infty$. Also, we take a sequence $(\Lambda^{i,n})_{n \ge 1, i \le n}$ s.t. $\Lambda^{i,n}: C([0,T];\R)^n \to \M(\Er \x \R)$ is Borel measurable.  We consider a Borel map $H:C([0,T]\;\R) \to \R$ and  a non--negative Borel map $F:C([0,T];\R)\x \M(\Er \x \R) \x \Pc_p(C([0,T];\R)) \to \R_+$  s.t. for each $x \in C([0,T];\R)$, $(m,\lambda) \mapsto F(x, \lambda,m)$ is continuous, and
\begin{align*}
    \sup_{x,m}\frac{F(x,\lambda,m)}{1 + \psi(x) + \|m\|^p_p + \|\lambda\|^2_2} + \frac{|H(x)|}{1 + \psi(x)} < \infty
\end{align*}
where $\psi$ is a non--negative Borel map satisfying: $\E[\psi(U)^r]< \infty$ with $U_0=\iota$ and $\mathrm{d}U_t=\sigma(t,U_t)\mathrm{d}W_t$, for some $r > 1$. We define the distribution
\begin{align*}
    \Rr^n:= \frac{1}{n} \sum_{i=1}^n\Lc\left( F^{i,n}(\Xbb^n), H(X^{i,n}), H(X^{i,n}) \wedge n, \overline{F}^n(\Xbb^n)  \right)
\end{align*}
where $F^{i,n}(\Xbb^n):=F(X^{i,n}, \Lambda^{i,n}(\Xbb^n), \mu^n)$ and $\overline{F}^n(\Xbb^n):=\frac{1}{n} \sum_{i=1}^n F(X^{i,n}, \Lambda^{i,n}(\Xbb^n), \mu^n).$

\begin{proposition}\label{prop:weak_borel}
    If $\sup_{n \ge 1} \frac{1}{n} \sum_{i=1}^n \E \left[ \| \Lambda^{i,n}(\Xbb^n))\|^2_2 \right]< \infty$ and $\frac{1}{n} \sum_{i=1}^n\left(\Lc(X^{i,n},\Lambda^{i,n}(\Xbb^n),\mu^n) \right)_{n \ge 1}$ converges to $\Lc\left(X, \Lambda, \mu \right)$ in $\Wc_{r}$ with $p \le r < 2$ for some random variables $(X,\Lambda,\mu)$. Then, any weakly convergent sub--sequence of $(\Rr^n)_{n \ge 1}$ converges to $\Lc\left(F(X,\Lambda,\mu),H(X), H(X),  \overline{F} \right)$ where
    \begin{align*}
        -\E\left[\overline{F} \Big| \mu \right] \le -\E \left[ F(X,\Lambda,\mu)\Big| \mu \right].
    \end{align*}
    %\begin{align*}
        %\lim_{n \to \infty} \frac{1}{n} \sum_{i=1}^n \E \left[ F(X^{i,n}, \Lambda^{i,n}(\Xbb^n), \mu^n) \wedge n \right]=\lim_{n \to \infty} \frac{1}{n} \sum_{i=1}^n \E \left[ F(X^{i,n}, \Lambda^{i,n}(\Xbb^n), \mu^n) \right]= \E \left[ F(X, \Lambda,\mu) \right].
    %\end{align*}
\end{proposition}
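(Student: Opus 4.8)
The plan is to combine a tightness argument with a lower--semicontinuity estimate for the ``aggregation'' functional $\kappa\mapsto\int F\,\mathrm{d}\kappa$; the whole difficulty is that $F$ is continuous only in the measure/kernel variables $(\lambda,m)$ and merely Borel in the path variable, so one cannot pass to the limit by joint continuity. \emph{Step~1 (compactness).} Since $b^{i,n}$ is bounded and, by \Cref{assum:main1}, $\sigma$ is bounded and non--degenerate, Girsanov's theorem gives that $\Lc(X^{i,n})$ is equivalent to $\Lc(U)$ with a Radon--Nikodym density bounded in $L^q(\Lc(U))$ uniformly in $i\le n$ and $n$, for every $q\ge1$; recall also that $U$ has moments of all orders. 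With $\E[\psi(U)^r]<\infty$ and the hypothesis $\sup_n\frac1n\sum_i\E[\|\Lambda^{i,n}(\Xbb^n)\|_2^2]<\infty$, the growth bounds on $F$ and $H$ give $\sup_n\frac1n\sum_i\E[|F^{i,n}(\Xbb^n)|+|H(X^{i,n})|]<\infty$, hence the same for $\overline F^n(\Xbb^n)$ by convexity; so every coordinate of $\Rr^n$ has a uniformly bounded first moment and $(\Rr^n)_{n\ge1}$ is tight on $\R^4$ by Markov's inequality. Moreover $\{\psi(X^{i,n})\}_{i,n}$, $\{\sup_{t\le T}|X^{i,n}_t|\}_{i,n}$ and $\{\|\Lambda^{i,n}(\Xbb^n)\|_2^{r}\}_{i,n}$ ($r<2$) are uniformly integrable.

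\emph{Step~2 (identification of the limit).} Pass to a subsequence (not relabelled) along which the enriched empirical laws $\frac1n\sum_i\Lc\big(X^{i,n},\Lambda^{i,n}(\Xbb^n),\kappa^n,\mu^n,F^{i,n}(\Xbb^n),H(X^{i,n}),H(X^{i,n})\wedge n,\overline F^n(\Xbb^n)\big)$ converge jointly, where $\kappa^n:=\frac1n\sum_j\delta_{(X^{j,n},\Lambda^{j,n}(\Xbb^n))}$; by hypothesis the $(X,\Lambda,\mu)$--part of the limit has law $\Lc(X,\Lambda,\mu)$, and by the standard mean--field (de Finetti) argument, cf. \cite{djete2019general}, $\kappa^n$ converges to the directing random measure $\kappa=\Lc((X,\Lambda)\mid\Gc)$ of an environment $\sigma$--algebra $\Gc$, with $\mu=\Lc(X\mid\Gc)=m_\kappa$ (the $x$--marginal of $\kappa$, so $\sigma(\mu)\subset\Gc$), while $\overline F^n=G(\kappa^n)$ for $G(\kappa):=\int F(x,\lambda,m_\kappa)\,\kappa(\mathrm{d}x,\mathrm{d}\lambda)$. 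Using that $F(\cdot,\lambda,m)$ is Borel but continuous in $(\lambda,m)$, and that all the $\Lc(X^{i,n})$ are dominated by $\Lc(U)$ with uniformly $L^q$--bounded densities, a Lusin/Scorza--Dragoni approximation and the weak--convergence argument of \cite{djete2019general} give that the limit of the $F^{i,n}$--coordinate is $F(X,\Lambda,\mu)$ and that of the $H(X^{i,n})$--coordinate is $H(X)$; since $H(X^{i,n})$ is uniformly integrable and the truncation level $n\to\infty$, $\frac1n\sum_i\E[|H(X^{i,n})-H(X^{i,n})\wedge n|]\to0$, so the $H(X^{i,n})\wedge n$--coordinate has the same limit $H(X)$. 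Hence any weak limit $\Rr$ of $\Rr^n$ is $\Lc\big(F(X,\Lambda,\mu),H(X),H(X),\overline F\big)$ with $\overline F=\lim_nG(\kappa^n)$.

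\emph{Step~3 (the conditional inequality).} Write $G=\sup_kG_k$ with $G_k(\kappa):=\int(F\wedge k)(x,\lambda,m_\kappa)\,\kappa(\mathrm{d}x,\mathrm{d}\lambda)$. Each $F\wedge k$ is bounded, Borel in the path variable and continuous in $(\lambda,m)$, so $G_k$ is continuous on the set of measures whose path--marginal is dominated by $\Lc(U)$ with controlled density — a set containing all $\kappa^n$ and $\kappa$; consequently $G$ is lower semicontinuous there, and along an a.s.--convergent realisation $\kappa^n\to\kappa$ one gets $\overline F=\lim_nG(\kappa^n)\ge G(\kappa)$. Since $\kappa=\Lc((X,\Lambda)\mid\Gc)$ and $\mu=m_\kappa$ is $\Gc$--measurable, $G(\kappa)=\E[F(X,\Lambda,\mu)\mid\Gc]$; moreover $\overline F$ is $\Gc$--measurable, being a limit of $\sigma(\kappa^n)$--measurable variables. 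Hence $\overline F\ge\E[F(X,\Lambda,\mu)\mid\Gc]$ a.s., and conditioning on $\mu$ (using $\sigma(\mu)\subset\Gc$ and the tower property) gives $\E[\overline F\mid\mu]\ge\E[F(X,\Lambda,\mu)\mid\mu]$, i.e. $-\E[\overline F\mid\mu]\le-\E[F(X,\Lambda,\mu)\mid\mu]$.

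\emph{Main obstacle.} The crux is the lower--semicontinuity input behind Steps~2--3: because $F$ is not jointly continuous, one must (i) reduce, via the uniform Girsanov domination, to the fixed reference law $\Lc(U)$ and run a Lusin/Scorza--Dragoni argument; (ii) identify the limit of the empirical measures $\kappa^n$ as a genuine directing measure $\kappa=\Lc((X,\Lambda)\mid\Gc)$ and exploit the relaxed (kernel) structure of $\Lambda$; and (iii) handle the unbounded nonnegative $F$ through the monotone truncation $F\wedge k$. It is precisely the possible lower--semicontinuity gap $\overline F\ge G(\kappa)$ — rather than equality — that yields the inequality, and not an identity, in the statement.
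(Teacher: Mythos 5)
Your overall architecture (Girsanov domination by the reference law $\Lc(U)$, stable-convergence-type identification of the $F$-- and $H$--coordinates, monotone truncation of the non-negative $F$ to produce a one-sided inequality) is the same as the paper's, and Steps~1--2 are essentially sound sketches of what the paper does via the auxiliary driftless systems $\boldsymbol{S}^{i,n}$ and the stable convergence of Jacod--M\'emin. However, Step~3 contains a genuine gap. You assert that $G_k$ is continuous ``on the set of measures whose path--marginal is dominated by $\Lc(U)$ with controlled density --- a set containing all $\kappa^n$ and $\kappa$'', and you then pass to the limit pathwise along a Skorokhod realisation of $\kappa^n\to\kappa$. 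But the $\kappa^n$ are the \emph{realized empirical measures} $\frac1n\sum_j\delta_{(X^{j,n},\Lambda^{j,n})}$: their path--marginals are purely atomic, hence singular with respect to the non-atomic law $\Lc(U)$, so they do not belong to that set. The domination you established in Step~1 is a property of the \emph{laws} $\Lc(X^{i,n})$, not of the realizations of $\kappa^n$. For an integrand that is merely Borel in the path variable, weak convergence of atomic measures gives no control on $\liminf_n\int (F\wedge k)\,\mathrm{d}\kappa^n$ pathwise (a Lusin exceptional set of small $\Lc(U)$-measure can still carry all the atoms), so neither the continuity of $G_k$ at the $\kappa^n$ nor the resulting lower semicontinuity inequality $\lim_n G(\kappa^n)\ge G(\kappa)$ is justified as written.

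The repair --- and the paper's actual route --- is to run the truncation argument in expectation rather than pathwise: for a non-negative continuous compactly supported test function $A$ and a truncation $\phi_k$, use $F\ge0$ to write
\begin{align*}
-\E\!\left[F^{i,n}(\Xbb^n)\,A(\mu^n,\overline F^n(\Xbb^n))\right]\le-\E\!\left[F^{i,n}(\Xbb^n)\,\phi_k\!\left(F^{i,n}(\Xbb^n)\right)A(\mu^n,\overline F^n(\Xbb^n))\right],
\end{align*}
average over $i$, pass to the limit in $n$ for the bounded truncated integrand coordinate-by-coordinate through the Girsanov transfer to the fixed reference law $\Lc(U)$ (this is where the domination legitimately enters, at the level of laws, and where stable convergence handles the Borel-in-$x$/continuous-in-$(\lambda,m)$ structure), and finally let $k\to\infty$ by dominated convergence using the uniform moment bound on $F^{i,n}$. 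Since the inequality holds for every non-negative $A$, conditioning on $\mu$ yields $-\E[\overline F\mid\mu]\le-\E[F(X,\Lambda,\mu)\mid\mu]$. Your monotone decomposition $G=\sup_kG_k$ is the right idea, but it must be exploited through these expectations, not through pointwise lower semicontinuity of $G$ at the empirical measures.
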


\begin{proof}
    Let $1 \le i \le n$ and let us introduce $\boldsymbol{S}^{i,n}:=(S^{i,j,n})_{1 \le j \le n}$ verifying: for each $1 \le j \le n$, $S^{i,j}_0=\iota^j$, 
    \begin{align*}
        \mathrm{d}S^{i,j,n}_t=b^{j,n}(t,\boldsymbol{S}^{i,j,n})\mathrm{d}t + \sigma(t,S^{i,j,n}_t) \mathrm{d}W^j_t\;\mbox{for }j \neq i,\;\mathrm{d}S^{i,i,n}_t=\sigma(t,S^{i,i,n}_t)\mathrm{d}W^i_t\;\mbox{and}\;\nu^{i,n}:=\frac{1}{n}\sum_{j=1}^n \delta_{S^{i,j,n}}.
    \end{align*}
    Also, $L^i_0=1$, $\mathrm{d}L^i_t=L^i_t \sigma(t,S^{i,i,n}_t)^{-1} b^{i,n}(t,\boldsymbol{S}^{i,n})\mathrm{d}W^i_t$ and $\mathrm{d}\P^i:=L^i_T \mathrm{d}\P$. Since $b^{i,n}$ is bounded, by Girsanov Theorem, we check that $\Lc^{\P^i}(\boldsymbol{S}^{i,n})=\Lc(\Xbb^n)$. Therefore, with $\nu^{i,n}:=\frac{1}{n} \sum_{j=1}^n \delta_{S^{i,j,n}}$, for any bounded Borel map $\Delta$
    \begin{align} \label{eq:equality_in_law}
        \E \left[ \Delta \left(X^{i,n}, \Lambda^{i,n}(\Xbb^n), \mu^n, \overline{F}^n(\Xbb^n) \right) \right]
        &=
        \E^{\P^i} \left[ \Delta \left(S^{i,i,n}, \Lambda^{i,n}(\Sbb^{i,n}), \nu^{i,n}, \overline{F}^n(\Sbb^{i,n}) \right) \right] \nonumber
        \\
        &=
        \E \left[ L^i_T \Delta \left(S^{i,i,n}, \Lambda^{i,n}(\Sbb^{i,n}), \nu^{i,n}, \overline{F}^n(\Sbb^{i,n}) \right) \right].
    \end{align}
    We set 
    $$
        \Pr^n:=\frac{1}{n} \sum_{i=1}^n \Lc\left(L^i, S^{i,i,n}, \Lambda^{i,n}(\Sbb^{i,n}), \nu^{i,n}, H(S^{i,i,n}), F^{i,n}(\Sbb^{i,n}), \overline{F}^n(\Sbb^{i,n}) \right)
    $$
    and
    $$
        \Qr^n:=\frac{1}{n} \sum_{i=1}^n \Lc\left(X^{i,n}, \Lambda^{i,n}(\Xbb^{n}),\mu^n, H(X^{i,n}),\;F^{i,n}(\Xbb^n), \overline{F}^n(\Xbb^{n}) \right).
    $$
    Since $b^{i,n}$ are bounded uniformly in $(i,n)$, we can check that $\sup_{n \ge 1} \frac{1}{n} \sum_{i=1}^n\E[|L^i_T|^q]+ \E[|(L^i_T)^{-1}|^q]<\infty$ for each $q >1$. Under the condition of the proposition, the sequences $(\Pr^n)_{n \ge 1}$ and $(\Qr^n)_{n \ge 1}$ are relatively compact for the weak convergence topology. Let $\Pr=\P \circ (L,S, {\Lambda}^{\Pr},\nu,H^{\Pr},F^{\Pr}, \overline{F}^{\Pr} )^{-1}$ and $\Qr=\P \circ (X,\Lambda^{\Qr},\mu,H^{\Qr},F^{\Qr},\overline{F}^{\Qr})^{-1}$ be the limits of a convergent sub--sequence. We use the same notation for the sequence and its sub--sequence for simplicity. If we denote $(L,S,\Lambda,\mu,H,F,\overline{F})$ the canonical process on $C([0,T];\R) \x C([0,T];\R) \x \M(\Er \x \R) \x \Pc(C([0,T];\R))\x \R^3$, for each $n \ge 1$, $\Pr^n\circ (S)^{-1}=\P \circ (U)^{-1}$ where $U_0=\iota$ and $\mathrm{d}U_t=\sigma(t,U_t)\mathrm{d}W_t$. Let $\Phi$ be continuous bounded map, by using stable convergence of \cite{jacod1981type}, 
    \begin{align*}
        &\E \left[L_T \Phi\left( S,\Lambda^{\Pr},\nu, F^{\Pr}, H^{\Pr} \right) \right]=\lim_{n \to \infty}  \E^{\Pr^n} \left[L_T \Phi\left( S,\Lambda,\nu, F, H \right) \right]
        \\
        &= \lim_{n \to \infty} \frac{1}{n} \sum_{i=1}^n  \E \left[L^i_T \Phi\left( S^{i,i,n},\Lambda^{i,n}(\Sbb^{i,n}),\nu^{i,n}, F(S^{i,i,n}, \Lambda^{i,n}(\Sbb^{i,n}),\nu^{i,n}), H(S^{i,i,n}) \right) \right]
        \\
        &= \E \left[L_T \Phi\left( S,\Lambda^{\Pr},\nu, F(S, \Lambda^{\Pr},\nu), H(S) \right) \right].
    \end{align*}
    This is true for any $\Phi$, consequently, we have ${F}^{\Pr}=F(S,{\Lambda}^{\Pr},\nu)$ and $H^{\Pr}=H(S)$ a.e.
    %\begin{align*}
        %&\E \left[L_T F^{\Pr} H(S) \Phi\left( H(S), F(S, {\Lambda}^{\Pr}, \nu), {F}^{\Pr} \right) \right]=\lim_{n \to \infty}  \E^{\Pr^n} \left[ L_T {F}  H(S)\Phi\left( H(S),F(S, \Lambda, \nu), {F} \right) \right]\\
        %&=\lim_{n \to \infty} \frac{1}{n} \sum_{i=1}^n \E \left[ L^i_T F\left(S^{i,i,n}, \Lambda^{i,n}(\Sbb^{i,n}), \nu^{i,n} \right) H(S^{i,i,n}) \Phi\left(H(S^{i,i,n}), F(\Sbb^{i,n}) , F(\Sbb^{i,n}) \right) \right] 
        %\\
        %&= \E \left[L_T F(S, {\Lambda}^{\Pr},\nu) H(S) \Phi\left( H(S), F(S, {\Lambda}^{\Pr}, \nu), {F}^{\Pr} \right) \right]. 
    %\end{align*}
    %leadg to 
    %$$
        %\lim_{n \to \infty} \frac{1}{n} \sum_{i=1}^n  \E \left[ L^i_T F(S^{i,i,n}, \Lambda^{i,n}(\Sbb^{i,n}), \nu^{i,n})  \right]=\E \left[L_T F(S, \widetilde{\Lambda},\nu)\right].
    %$$
    For any continuous bounded map $G:C([0,T];\R) \x \M(\Er \x \R) \x \Pc_p(C([0,T];\R)) \to \R$, we have
    \begin{align*}
        \lim_{n \to \infty}\frac{1}{n} \sum_{i=1}^n\E \left[ G\left(X^{i,n}, \Lambda^{i,n}(\Xbb^n),\mu^n\right) \right]
        =\lim_{n \to \infty} \frac{1}{n} \sum_{i=1}^n \E \left[ L^i_T G(S^{i,i,n}, \Lambda^{i,n}(\Sbb^{i,n}), \nu^{i,n}) \right] =\E [L_T G(S, {\Lambda}^{\Pr},\nu)]
    \end{align*}
    and $\lim_{n \to \infty}\frac{1}{n} \sum_{i=1}^n\E \left[ G(X^{i,n}, \Lambda^{i,n}(\Xbb^n),\mu^n) \right]=  \E \left[ G(X,{\Lambda},\mu) \right].$
    It is easy to verify that $L_T \ge 0$ and $\E[L_T]=1$. By setting $\mathrm{d}\overline{\P}:=L_T \mathrm{d}\P$, we have $\E^{\overline{\P}} [G(S,{\Lambda}^{\Pr},\nu)]=\E \left[ G(X,\Lambda,\mu)\right]. $
    %\begin{align*}
        %\E^{\overline{\P}} [G(S,\widetilde{\Lambda},\nu)]=\E \left[ G(x,\Lambda,\mu)\right]. 
    %\end{align*}
    %By considering $X$ the random variable s.t. $\Lc(X|\mu)=\mu$ a.e., 
    Since the previous equality is true for any continuous bounded $G$, we deduce that $\Lc(X,\Lambda,\mu)=\Lc^{\overline{\P}}\left(S,{\Lambda}^{\Pr},\nu \right)$ and, by similar arguments to the ones did in  \eqref{eq:equality_in_law}, we can check that $\Lc^{\overline{\P}}( S,\Lambda^{\Pr}, \nu,H^{\Pr}, F^{\Pr}) = \Lc( X,\Lambda^{\Qr}, \mu,H^{\Qr}, F^{\Qr})$. Consequently
    \begin{align*}
        &\Lc( X,\Lambda^{\Qr}, \mu,H^{\Qr}, F^{\Qr})=\Lc^{\overline{\P}}( S,\Lambda^{\Pr}, \nu,H^{\Pr}, F^{\Pr})
        \\
        &= \Lc^{\overline{\P}}( S,\Lambda^{\Pr}, \nu,H(S), F(S,\Lambda^{\Pr},\nu)) = \Lc( X,\Lambda, \mu,H(X), F(X,\Lambda,\mu))
    \end{align*}
    This is true for any sub--sequences of $(\Pr^n)_{n \ge 1}$ and $(\Qr^n)_{n \ge 1}$ . Also, we can show that for any continuous bounded $v:\R \to \R$,  $\lim_{n \to \infty} \frac{1}{n} \sum_{i=1}^n \E \left[v \left(\left| H(X^{i,n}) \wedge n - H(X^{i,n}) \right| \right) \right] = v(|0|)$. To conclude, let us provide the characterization of $\overline{F}^{\Qr}$. Let us take a sequence of non--negative smooth maps $(\phi_k:\R \to \R_+)_{k \ge 1}$ s.t. for each $k$, $\phi_k$ has compact support, $\phi_k \le 1$ and $\lim_{k \to \infty}\phi_k(x)=1$ for each $x \in \R$. For a non--negative continuous map with compact support $A:\Pc_p(C([0,T];\R)) \x \R \to \R,$ for each ($i$, $n$, $k$),
    \begin{align} \label{eq:before_limit_charac}
        -\E \left[F^{i,n}(\Xbb^{n})) A(\mu^{n}, \overline{F}^n(\Xbb^{n}) ) \right] &= -\E \left[ F^{i,n}(\Xbb^{n})) \phi_k\left( F^{i,n}(\Xbb^{n}) \right) A(\mu^{n}, \overline{F}^n(\Xbb^{n}) ) \right] \nonumber
        \\
        &\;\;- \E \left[  F^{i,n}(\Xbb^{n})) \left(1-\phi_k\left( F^{i,n}(\Xbb^{n}) \right) \right)A(\mu^{n}, \overline{F}^n(\Xbb^{n}) ) \right] \nonumber
        \\
        &\le -\E \left[ F^{i,n}(\Xbb^{n})) \phi_k\left( F^{i,n}(\Xbb^{n}) \right) A(\mu^{n}, \overline{F}^n(\Xbb^{n}) ) \right].
    \end{align}
    We recall that
    \begin{align*}
        &\E \left[|F^{\Qr}| \right] \le \liminf_{n \to \infty} \frac{1}{n} \sum_{i=1}^n \E \left[|F^{i,n}(\Xbb^n)| \right] 
        \\
        &\le C \left( 1 +  \sup_{n \ge 1} \frac{1}{n} \sum_{i=1}^n \E \left[ \| \Lambda^{i,n}(\Xbb^n))\|^2_2 \right] + \E[|L^i_T|^{r'}]^{1/r'}\E[\psi(U)^r]^{1/r} \right) < \infty
    \end{align*}
    where $1/r+1/r'=1$ and $C>0$ independent of $n$. Then, in \Cref{eq:before_limit_charac}, by taking the empirical sum, the limit in $n$ and, the limit in $k$ by dominated convergence, we found 
    \begin{align*}
        -\E \left[ \overline{F}^{\Qr} A (\mu, \overline{F}^{\Qr} ) \right] \le -\lim_{k \to \infty}\E \left[ F^{\Qr} \phi_k\left( F^{\Qr} \right) A(\mu,\overline{F}^{\Qr}) \right] = -\E \left[ F^{\Qr} A(\mu,\overline{F}^{\Qr}) \right].
    \end{align*}
    This is true for any non--negative $A$, therefore
    \begin{align*}
        -\E \left[ \overline{F}^{\Qr} \Big| \mu \right] \le -\E \left[ F^{\Qr} \Big| \mu \right] = -\E \left[ F(X,\Lambda,\mu) \Big| \mu \right].
    \end{align*}
\end{proof}

\end{appendix}

\bibliographystyle{plain}
%\bibliography{McKean-Vlasov_bib}
%\bibliography{bibliographyFabrice_01-2024}

\bibliography{PA-McKeanVlasov_arxiv_version}

%\bibliography{bibliographyFabrice_24-08-22}

%\bibliography{bibliographyFabrice_24-08-22_bis}

%bibliographyFabrice_24-08-22_bis

\end{document}